\theoremstyle{plain}
\renewcommand{\theequation}{\arabic{section}.\arabic{equation}}
\renewcommand\thefigure{\thesection.\@arabic\c@figure}
\renewcommand{\thefigure}{\arabic{section}.\arabic{figure}}
\newtheorem{thm}{\bf Theorem}
\newenvironment{theorem}{\begin{thm}} {\end{thm}}
\newtheorem{cor}{\bf Corollary}
\newtheorem{pro}{Proposition}[section]
\newtheorem{lmm}{\bf Lemma}
\newcommand{\sign}{\text{sign}}
\theoremstyle{example}
\theoremstyle{remark}
\newtheorem{rem}{\bf Remark}[section]
\theoremstyle{definition}
\numberwithin{table}{section}
\def \ri {{\rm i}}
\def \d{{\rm d}}
\newcommand{\bs}[1]{\boldsymbol{#1}}
\def \rd {{\rm d}}
\def \V {{\mathbb V}}
\renewcommand \wedge \times
\begin{document}
	\bibliographystyle{plain}
	\graphicspath{{./figs/}}
	
\title[Integral fractional Laplacian] {On diagonal dominance of FEM stiffness matrix of fractional Laplacian and maximum principle preserving schemes for fractional Allen-Cahn equation}
 \author[H. Liu, \, C. Sheng, \,  L. Wang \,  \& \,  H. Yuan]{Hongyan Liu${}^{1}$, \; Changtao Sheng${}^{2}$,  \; Li-Lian Wang${}^{2}$ \; and \; Huifang Yuan${}^{3}$}

\subjclass[2000]{35B50, 41A05, 41A25, 74S05.}	
\keywords{Diagonal dominance, Maximum principle, integral fractional Laplacian,   fractional-in-space Allen-Cahn equation.}
		
\thanks{${}^{1}$School of Mathematical Sciences, University of Electronic Science and Technology of China, Chengdu Sichuan 611731, China.  Email: hyliu@std.uestc.edu.cn (H. Liu).\\
\indent ${}^{2}$Division of Mathematical Sciences, School of Physical and Mathematical Sciences, Nanyang Technological University, 637371, Singapore. The research of the  authors is partially supported by Singapore MOE AcRF Tier 2 Grants:  MOE2018-T2-1-059 and MOE2017-T2-2-144. Emails: ctsheng@ntu.edu.sg (C. Sheng) and  lilian@ntu.edu.sg (L. Wang).\\
	\indent ${}^{3}$Department of Mathematics, Southern University of Science and Technology, Shenzhen, 518055, China, and School of Mathematics and Statistics, Wuhan University, Wuhan, 430072, China.
	Email:  yuanhf@sustech.edu.cn (H. Yuan).\\
	\indent The first and last two authors would like to thank NTU  for hosting their visits devoted to this collaborative work.
         }

 \begin{abstract}
In this paper, we  study  diagonal dominance of the  stiffness matrix resulted from the piecewise linear finite element discretisation
of the integral fractional Laplacian under global homogeneous Dirichlet boundary condition in one spatial dimension.  We first derive the exact form of this matrix in the frequency space which is extendable to multi-dimensional rectangular elements.   Then we give the complete answer when the stiffness matrix can be strictly diagonally dominant.  As one application, we apply this notion to the construction of maximum principle preserving schemes for the fractional-in-space Allen-Cahn equation, and provide ample numerical results to verify our findings.
%
\end{abstract}
%
	
\maketitle

\vspace*{-15pt}
\section{Introduction}

The study of diagonal dominance of a matrix has been a research  subject of longstanding  interest in numerical linear algebra and  numerical analysis (cf. \cite{Roger1985,Golub1996}).
On one hand,  this type of structured matrices enjoy  appealing properties,   such as  stable Gaussian elimination without pivoting and guaranteed convergence of Jacobi and Gauss-Seidel iterations among others  (cf. \cite{Alfa2002,george2004gaussian,farid2011notes}).  On the other hand, numerical methods for solving PDEs are a rich source of many linear systems whose coefficient matrices form diagonal dominant matrices (cf. \cite{urekew1993importance,Quarteroni94,Tveito2009}). One well-worn example is the matrix resulted from the piecewise finite element discretization of $u''(x)$  with homogeneous Dirichlet boundary conditions on a uniform partition of a finite interval. However, this property is unknown to date for the  fractional counterpart $(-\Delta)^{s}u(x)$.
 The main purpose of this paper is to provide a complete answer to this and discuss one of its applications.

 We consider a piecewise linear finite element approximation of the  fractional Poisson equation on the finite interval $\Omega=(a,b)$ with $s\in (0,3/2)$:
\begin{equation}
(-\Delta)^{s} u(x)=f(x),\;\;\; x\in\Omega; \quad u(x)=0,\;\;\; x\in\Omega^{c},
\end{equation}
where the integral fractional Laplacian operator takes the form
\begin{equation}\label{fracLap-defn}
(-\Delta)^s  u(x)=C_s \,{\rm p.v.}\! \int_{\mathbb R} \frac{ u(x)- u(y)}{|x-y|^{1+2s}} \rd y,\quad C_s:=\frac{2^{2s}s\Gamma(s+1/2)}{\sqrt{\pi}\Gamma(1-s)},
\end{equation}
or equivalently by the Fourier transform:
\begin{equation}\label{viafouriertransform}
  (-\Delta)^{s} u(x)={\mathscr F}^{-1}\big[|\xi|^{2s} \mathscr{F}{\left[ u\right]}({\xi})\big](x).
\end{equation}
Let $\{\phi_{j}\}_{j=1}^{N-1}$ be a set of $C^0$-piecewise linear nodal basis associated with a uniform partition of $\Omega$ with mesh size $h$. 
Different from the computation in the physical space based on \eqref{fracLap-defn} (cf. \cite{wang2019finite}), we evaluate the entries of the fractional stiffness matrix $\bs S$ in the frequency space using \eqref{viafouriertransform}:
\begin{equation}\label{Selement}
S_{kj}=S_{jk}=\big((-\Delta)^{s/2} \phi_j,  (-\Delta)^{s/2}  \phi_k \big)_{\mathbb R}=\int_{\mathbb R}|\xi|^{2s} {\mathscr F}[ \phi_j](\xi)\overline{{\mathscr F}[\phi_k](\xi)} \,{\rm d}\xi,
\end{equation}
which leads to the explicit expression  of  this symmetric Toeplitz matrix  (see Theorem \ref{th21}).  Remarkably, this approach can be extended to rectangular tensorial finite elements in two or three-dimensional rectangular or L-shaped domains by reducing $2d$-dimensional integrals into one- or two-dimensional integrals using polar or spherical coordinates  (which we shall report in a separate paper).  It is important to remark  that  the computation of the stiffness matrix in two-dimensions on unstructured meshes is much involved (cf.  \cite{acosta2017short,Ainsworth2018}).
It is  also noteworthy of the recent works on  quadrature-based finite difference  methods for integral fractional Laplacian on regular domains \cite{tian2015class,duo2018novel,duo2019accurate}. 

With the explicit form of $\bs S$  at our disposal, we can rigorously show that (see Theorem \ref{DM01}): (i) when the fractional order $s\in (s_0,1]$ with $s_0\approx 0.2347,$ the stiffness matrix $\bs S$ is strictly diagonally dominant with positive diagonal entries;  (ii) for $s\in (1, 3/2),$
$\bs S$ is non-diagonally dominant, and each diagonal entry is strictly smaller than the summation of   other entries (in magnitude) in the same row (except for the first and last rows); and (iii) for $s\in (0,s_0),$ there exists an $N_0(s)$ such that if $N<N_0(s),$   the strict diagonal dominance still holds. In fact, the smallest $N_0(s)$ is around $155$ attained  at $s_*\approx 0.13$ and then increases rapidly as the distance $|s-s_*|$ (for $s\in (0,s_0))$ increases (see  Table \ref{TabN0} and Figure \ref{t1sN0} (b)).


The second purpose of this paper  is to  apply  the notion of diagonal dominance
to  the construction of maximum principle preserving schemes for the fractional-in-space Allen-Cahn equation with spatial  finite element discretisation. More precisely, we consider
\begin{equation}\label{fAC00}
\begin{cases}
u_t + \epsilon^{2}(-\Delta)^s u + f(u) = 0, \quad & x \in \Omega,\;\; t\in(0,T],\\
u(x,t)=0, \quad & x \in \Omega^c=\mathbb R\setminus \Omega,\;\;t\in [0,T],\\
u(x, 0)=u_{0}(x),\quad & x \in \Omega,
\end{cases}
\end{equation} 
where $f(u) = F^\prime (u)$ with
\begin{equation}\label{Fu00}
F(u) = \frac{u^2(u-1)^2}{4}\;\;\; {\rm so}\;\;\; f(u)=\frac{u(u-1)(2u-1)}{2}.
\end{equation}
Different from the usual double-well potential with minima at $u=\pm 1$, i.e., $F(u)=(u^2-1)^2/4$,  the modified $F(u)$
has minima at $u=0,1$ (cf.  \cite{li2017space,liu2018time,duo2019fractional}), in view of the  global ``boundary condition'' imposed on $\Omega^{c}$.  There has been much recent interest in numerical solutions of  fractional-in-space models but  with possibly different definitions of the fractional operator. For example,
 Burrage et al. \cite{burrage2012efficient} considered the solutions of  fractional diffusion equations with the ``discrete'' fractional Laplacian obtained by first finding a matrix representation, $\bs A$, of the Laplacian (by the finite element) and raising it to the same fractional power $\bs A^s$.   Bueno-Orovio et al. \cite{bueno2014fourier}  considered the spectral fractional Laplacian and proposed  Fourier spectral methods.
 In Hou et al. \cite{hou2017numerical}, Crank-Nicolson finite difference method for fractional-in-space Allen-Cahn equation with the fractional derivative:
\begin{equation}\label{oneDD}
\mathcal{L}^{\alpha}_x u(x):=\frac{1}{-2\cos(\frac{\pi\alpha}{2})}\big({}_{a}D_{x}^{\alpha}u+{}_{x}D_{b}^{\alpha}u\big)(x),\quad\alpha\in(1,2),
\end{equation}
where ${}_{a}D_{x}^{\alpha}u$ and ${}_{x}D_{b}^{\alpha}$ denote the left and right Riemann-Liouville fractional derivatives defined on $\Omega$, and the finite-difference matrix with the usual homogeneous  boundary condition: $u|_{\partial\Omega}=0,$ was derived from \cite{tian2015class}. The method in \cite{hou2017numerical} can be directly extended to the  multi-dimensional model with the directional fractional Laplacian $\mathcal{L}^{\alpha}_x u(x)=(\mathcal{L}^{\alpha}_{x_1}+\mathcal{L}^{\alpha}_{x_2} + \mathcal{L}^{\alpha}_{x_3}) u(x)$ on $\Omega^3$ with $u(x)|_{\partial \Omega^3}=0,$ in light of the tensorial nature of the operator and domain. It is known   that $\mathcal{L}^{\alpha}_x u(x)=(-\Delta)^{\alpha/2} u(x)$ on $\Omega$, when $u=0$ on $\Omega^c$ and $\alpha\in (0,2)$ but $\alpha\not=1$.  However, under the local boundary condition: $u|_{\partial \Omega}=0,$ they are different.  Recently, Duo and Wang  \cite{duo2019fractional} proposed quadrature-based finite difference
method  for \eqref{fAC00} with the difference matrix obtained earlier in  \cite{duo2019accurate}, where the approximation error   $(-\Delta)^s u-(-\Delta)_h^su$ is of order $h^2$ in $L^\infty$-sense.  Wang et al. \cite{wang2019finite}  studied the finite element methods for the fractional-in-space Cahn-Hilliard equation.
It is also noteworthy that there is a growing interest in time-fractional Allen-Cahn model (cf. \cite{du2019time,liu2018time,tang2019energy,zhao2019power,liao2019second}).  Needless to say, the development of efficient numerical methods for the integer order Allen-Cahn/Cahn-Hilliard equations and more general phase-field models is continuously
attracting much research attention. One can  refer to the review paper
 \cite{shen2019new} and  the book chapter \cite{du2020phase} for the state-of-the-art and  comprehensive lists of references.

Different from the very limited existing works, we consider  finite element discretisation in space with a modification similar to that in Xu et al.  \cite{xu2019stability}, and propose the  semi-implicit  and Crank-Nicolson schemes  as advocated in  \cite{tang2016implicit,tang2020revisit} for the integer-order Allen-Cahn equation.  We show that the proposed schemes preserve maximum principle and energy dissipation (for $s\in (s_0, 1)$) at the discrete level.  Though we  focus on one dimensional in space, the methods can be extended to multiple dimensions with the
 directional fractional Laplacian  $\mathcal{L}^{\alpha}_x u(x)$ and global homogeneous Dirichlet boundary condition. However,
 the construction of this type of schemes for the integral fractional Laplacian in multiple dimensions is still open, though such properties can be shown at the continuous level.

The rest of this paper is organised as follows. 
In Section \ref{sect2:FEM}, we present  the exact form of the FEM stiffness matrix based on the Fourier definition with implementation in the Fourier space.  More importantly, we prove the main result on the diagonal dominance of this matrix.
In Section \ref{sect3:AC}, we propose the semi-implicit and modified FEM schemes for the fractional-in-space Allen-Cahn equation, and show that they preserve the maximum principle and energy dissipation.
 In Section \ref{sect4:numer}, we provide ample numerical results to  support the theoretical results.
 The final section is for some concluding remarks.

\section{Finite element method for fractional Laplacian}\label{sect2:FEM}
\setcounter{equation}{0} \setcounter{lmm}{0} \setcounter{thm}{0}
In this section, we derive the explicit stiffness matrix of the $C^0$-piecewise linear FEM for  the fractional Laplacian using the frequency domain. 
More importantly, we will study the diagonally dominant properties of the stiffness matrix for piecewise linear FEM.

\subsection{Finite element method}


Consider a uniform partition of the interval $\Omega=(a,b)$:
$$x_j=a+jh,\quad 0\le j\le N,\quad h= 2/N.$$
The piecewise linear FEM basis  is given by
\begin{equation}\label{p1fembasis}
\phi_j(x)=\begin{cases}
\frac{x-x_{j-1}}{h},\quad & {\rm if}\;\; x\in (x_{j-1},x_j),\\[2pt]
\frac{x_{j+1}-x}{h}, \quad & {\rm if}\;\; x\in (x_{j},x_{j+1}),\\[2pt]
0,\quad & \text{elsewhere on}\;\; \mathbb R.
\end{cases}
\end{equation}
Correspondingly, we define the piecewise linear finite element space
\begin{equation}\label{Vhh}
\V_{\!h}={\rm span}\{\phi_j(x),\; 1\leq j\leq N-1\}.
\end{equation}
and  intend to evaluate the $(N-1)\times (N-1)$ fractional stiffness matrix $\bs S$ with the entries
\begin{subequations} \label{uvsh}
\begin{align}
S_{kj}&=S_{jk}=
 \frac{C_{s}} 2 \int_{\Omega} \int_{\Omega}
 \frac{(\phi_j(x)-\phi_j(y))(\phi_k(x)-\phi_k(y))}{|x-y|^{d+2s}}\,{\rm d} x {\rm d}  y\nonumber\\[4pt]
 & \quad  +C_{d,s}  \int_{\Omega} \bigg(\int_{\Omega^c}
\frac 1 {|x-y|^{d+2s}}  {\rm d} y\bigg)  \phi_j(x) \phi_k(x)\, {\rm d} x \label{phyform}\\[4pt]
&=\int_{\mathbb R}|\xi|^{2s} {\mathscr F}[ \phi_j](\xi)\overline{{\mathscr F}[\phi_k](\xi)} \,{\rm d}\xi, \label{freqform}
\end{align}
\end{subequations}
for $1\le k,j \le N-1.$ The representation  \eqref{phyform} corresponds to the implementation in the  physical space, while the formula \eqref{freqform} is implemented in the frequency space.   

The following formula on the Fourier transform of the FEM basis  plays an important role in the evaluation of $\bs S$.
\begin{lmm}\label{Dunfordfemp1} Let $\{\phi_j\}$ be the FEM basis given in \eqref{p1fembasis}. Then we have
\begin{equation}\label{phinfem}
 {\mathscr F}[ \phi_j](\xi)=\frac {2h} {\sqrt{2\pi}} \frac{1-\cos(h\xi)}{(h\xi)^2}  {e^{-\ri x_j\xi} },\quad \forall\, \xi\in {\mathbb R},\;\; 1\le j\le N-1.
\end{equation}
\end{lmm}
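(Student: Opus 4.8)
The plan is to compute the Fourier transform of a single hat function $\phi_j$ directly from its piecewise-linear definition \eqref{p1fembasis}. Recall the convention implicit in \eqref{Selement}: the Fourier transform carries the symmetric normalisation ${\mathscr F}[\phi_j](\xi)=\frac{1}{\sqrt{2\pi}}\int_{\mathbb R}\phi_j(x)e^{-\ri x\xi}\,{\rm d}x$, which accounts for the $1/\sqrt{2\pi}$ prefactor in the claimed formula \eqref{phinfem}. Since $\phi_j$ is supported on $(x_{j-1},x_{j+1})$ and is a triangular bump of height $1$ centred at $x_j$, the cleanest route is to exploit translation. First I would write $\phi_j(x)=\Lambda\!\big((x-x_j)/h\big)$, where $\Lambda(t)=\max\{1-|t|,0\}$ is the standard reference hat on $(-1,1)$. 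The translation rule ${\mathscr F}[f(\cdot-x_j)](\xi)=e^{-\ri x_j\xi}{\mathscr F}[f](\xi)$ immediately produces the factor $e^{-\ri x_j\xi}$ and strips the problem of its index dependence, so it remains only to transform the single scaled reference bump.

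Next I would carry out the one remaining integral. By the scaling rule ${\mathscr F}[f(\cdot/h)](\xi)=h\,{\mathscr F}[f](h\xi)$, it suffices to compute ${\mathscr F}[\Lambda](\eta)=\frac{1}{\sqrt{2\pi}}\int_{-1}^{1}(1-|t|)e^{-\ri t\eta}\,{\rm d}t$ and then set $\eta=h\xi$. There are two equally short ways to evaluate this: either integrate $\int_{-1}^{1}(1-|t|)\cos(t\eta)\,{\rm d}t$ directly by parts (the odd sine part vanishes by symmetry), or observe that $\Lambda=\mathbbm 1_{(-1/2,1/2)}\ast\mathbbm 1_{(-1/2,1/2)}$ is a self-convolution of a box, so its transform is the square of a sinc, ${\mathscr F}[\Lambda](\eta)=\sqrt{2\pi}\,\big(\frac{\sin(\eta/2)}{\eta/2}\cdot\frac{1}{\sqrt{2\pi}}\big)^2$. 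Either computation yields ${\mathscr F}[\Lambda](\eta)=\frac{1}{\sqrt{2\pi}}\cdot\frac{2(1-\cos\eta)}{\eta^2}$, using the half-angle identity $1-\cos\eta=2\sin^2(\eta/2)$. Substituting $\eta=h\xi$ and reinstating the scaling factor $h$ and the translation phase gives
\begin{equation*}
{\mathscr F}[\phi_j](\xi)=e^{-\ri x_j\xi}\,h\,{\mathscr F}[\Lambda](h\xi)=\frac{2h}{\sqrt{2\pi}}\,\frac{1-\cos(h\xi)}{(h\xi)^2}\,e^{-\ri x_j\xi},
\end{equation*}
which is exactly \eqref{phinfem}.

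This lemma is essentially a routine computation with no genuine obstacle; the only points demanding care are bookkeeping rather than mathematical depth. The chief thing to get right is consistency of the Fourier normalisation, so that the prefactor matches the convention under which \eqref{freqform} reproduces the bilinear form \eqref{Selement}; a mismatched $1/\sqrt{2\pi}$ or $1/(2\pi)$ would propagate into every entry of $\bs S$. The removable singularity at $\xi=0$ is harmless, since $\frac{1-\cos(h\xi)}{(h\xi)^2}\to \frac12$ as $\xi\to 0$, consistent with ${\mathscr F}[\phi_j](0)=\frac{1}{\sqrt{2\pi}}\int_{\mathbb R}\phi_j=\frac{h}{\sqrt{2\pi}}$. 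I would also remark that the formula holds verbatim for all $1\le j\le N-1$ precisely because each interior basis function is a pure translate of the same reference bump with no truncation at the boundary, the supports $(x_{j-1},x_{j+1})$ lying inside $\overline\Omega$; the factored form of \eqref{phinfem}, with index dependence isolated in the phase $e^{-\ri x_j\xi}$, is exactly what makes the subsequent frequency-space assembly of the Toeplitz matrix $\bs S$ in Theorem \ref{th21} tractable.
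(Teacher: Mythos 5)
Your proof is correct, and it takes a genuinely different route from the paper's. The paper proves Lemma \ref{Dunfordfemp1} by brute force: it splits the integral over the two linear pieces $(x_{j-1},x_j)$ and $(x_j,x_{j+1})$, integrates each by parts, and then combines the resulting exponentials $e^{-\ri x_{j-1}\xi}, e^{-\ri x_j\xi}, e^{-\ri x_{j+1}\xi}$ into the factor $2-2\cos(h\xi)$ at the end. You instead factor the problem structurally: writing $\phi_j(x)=\Lambda\big((x-x_j)/h\big)$ with $\Lambda$ the reference hat, the translation rule produces the phase $e^{-\ri x_j\xi}$ immediately and the scaling rule reduces everything to one index-free computation of ${\mathscr F}[\Lambda]$, which you then evaluate either by a short real integration (the sine part vanishing by parity) or by the observation $\Lambda=\mathbbm 1_{(-1/2,1/2)}\ast\mathbbm 1_{(-1/2,1/2)}$, so that ${\mathscr F}[\Lambda]$ is a squared sinc. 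Both arguments are elementary and of comparable length, but they buy different things: the paper's version is entirely self-contained calculus with no appeal to transform rules, whereas yours makes it conceptually transparent \emph{why} the index dependence sits only in the phase factor --- which is exactly the mechanism behind the Toeplitz structure of $\bs S$ in Theorem \ref{th21} --- and the convolution identity explains, rather than merely verifies, the appearance of $\sin^2(h\xi/2)=(1-\cos(h\xi))/2$. Your closing sanity checks (the normalisation convention matching \eqref{Selement}, the removable singularity at $\xi=0$ against $\int_{\mathbb R}\phi_j=h$, and the absence of boundary truncation for interior nodes) are all accurate and are points the paper leaves implicit.
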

\begin{proof}
Using \eqref{p1fembasis} and integration by parts, we obtain from direct calculation that
\begin{eqnarray}
 &&{\mathscr F}[ \phi_{j}](\xi)=\frac{1}{\sqrt{2\pi}}\int_{\mathbb R}  \phi_j (x)e^{-\ri\xi x} {\rm d}x = \frac{1}{\sqrt{2\pi}} \int^{x_{j+1}}_{x_{j-1}} \phi_j (x)e^{-\ri\xi x} {\rm d}x\nonumber
  \\&&\quad=\frac{1}{h\sqrt{2\pi}} \Big\{\int^{x_j}_{x_{j-1}} (x-x_{j-1}) e^{-\ri\xi x} {\rm d}x+ \int^{x_{j+1}}_{x_{j}} (x_{j+1}-x) e^{-\ri\xi x} {\rm d}x\Big\}\nonumber
 \\&&\quad=\frac{1}{h\sqrt{2\pi}} \Big\{\frac{-h}{\ri \xi}e^{-\ri x_j\xi}+\frac{1}{\xi^2}(e^{-\ri x_j\xi}-e^{-\ri x_{j-1}\xi})\Big\}\nonumber
 \\&&\qquad+ \frac{1}{h\sqrt{2\pi}} \Big\{\frac{h}{\ri \xi}e^{-\ri x_j\xi}-\frac{1}{\xi^2}(e^{-\ri x_{j+1}\xi}-e^{-\ri x_{j}\xi})\Big\}\nonumber
 \\&&\quad=\frac{-1}{h\sqrt{2\pi}} \Big\{\frac{e^{-\ri x_{j-1}\xi}-2e^{-\ri x_j\xi}+e^{-\ri x_{j+1}\xi}}{\xi^2}\Big\}\nonumber
 \\&&\quad=\frac{-e^{-\ri x_j\xi}}{h\sqrt{2\pi}} \Big\{\frac{e^{-\ri h\xi}+e^{\ri h\xi}-2}{\xi^2}\Big\}=\frac{2e^{-\ri x_j\xi}}{h\sqrt{2\pi}} \Big\{\frac{1-\cos(h\xi)}{\xi^2}\Big\}.\nonumber
\end{eqnarray}
This ends the proof.
\end{proof}
With the aid of Lemma \ref{Dunfordfemp1}, we can obtain the entires of stiffness matrix $\bs S$ explicitly. Here, we sketch the derivation in Appendix \ref{AppendixA} to avoid distraction from the main result.
\begin{theorem}\label{th21}
For $s\in(0,\frac32)$,  
the FEM stiffness matrix $\bs S=(\bs S_{kj})$ is a symmetric Toeplitz matrix given by 
\begin{equation}\label{StiffMatrix}
\bs S=
  \frac{h^{1-2s} } {2  \Gamma(4-2s) \cos(s\pi)} \begin{bmatrix}
   t_{0} &\hspace{-4pt} t_{1} & t_2 & \cdots & t_{N-4}& t_{N-3} & t_{N-2} \\[1pt]
   t_{1} &\hspace{-4pt} t_{0} & t_{1} &\hspace{-4pt} \ddots & \cdots & t_{N-4}  & t_{N-3}\\[-1pt]
   t_{2} &\hspace{-4pt} t_{1} & t_{0} & \ddots & \hspace{-4pt}\ddots &\vdots   & t_{N-4} \\[0pt]
   \vdots& \hspace{-4pt}\ddots& \hspace{-4pt}\ddots & \ddots   & \hspace{-4pt}\ddots &\hspace{-4pt} \ddots & \vdots \\[2pt]
   t_{N-4}   &  \vdots   &\hspace{-4pt} \ddots   &\hspace{-4pt} \ddots  &\hspace{8pt} t_0 &t_1  & t_2   \\[-2pt]
   t_{N-3}  & t_{N-4} & \cdots   &\hspace{-4pt}\ddots &\hspace{8pt}  t_{1}  & t_{0} & t_{1} \\[3pt]
   t_{N-2}  & t_{N-3} & t_{N-4}  & \cdots &\hspace{8pt} t_2  & t_{1} & t_{0}
  \end{bmatrix},
\end{equation}
which
 is generated by the vector $(t_0, t_1, \cdots, t_{N-2})$ in the first  row or column  of $\bs S$ with
\begin{equation}\begin{split}\label{gpa00}
 t_p=  \sum_{i=-2}^2 c_i |p+i|^{3-2s},\quad c_0=6,\;\; c_{\pm 1}=-4,\;\; c_{\pm 2}=1.
 \end{split}\end{equation}
 In particular, if $s=1/2$,   the entries of $\bs S$ should be  obtained by
\begin{equation}\label{StiffMatrix01/2}
 S_{kj}= \frac 1 4 \lim_{s\to \frac 1 2} \frac{t_{p}}{\cos(s\pi)}=\frac 1 {2\pi} \sum_{i=-2}^2 c_i (p+i)^2 \ln |p+i|, \;\;\;\;  p=|k-j|,
\end{equation}
where we should  understand that  $(p+i)^2\ln |p+i|=0$ when $p+i=0.$
%
\end{theorem}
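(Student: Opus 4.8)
The plan is to feed the Fourier transforms from Lemma~\ref{Dunfordfemp1} into the frequency representation \eqref{freqform} and collapse the problem to one scalar integral. Since $1-\cos(h\xi)$ and $(h\xi)^2$ are real, the product $\mathscr F[\phi_j](\xi)\overline{\mathscr F[\phi_k](\xi)}$ equals $\tfrac{2h^2}{\pi}(h\xi)^{-4}(1-\cos(h\xi))^2 e^{-\ri(j-k)h\xi}$, so that
\[
S_{kj}=\frac{2h^2}{\pi}\int_{\mathbb R}|\xi|^{2s}\frac{(1-\cos(h\xi))^2}{(h\xi)^4}\,e^{-\ri(j-k)h\xi}\,\rd\xi .
\]
Rescaling $\eta=h\xi$ extracts the mesh factor and gives
\[
S_{kj}=\frac{2h^{1-2s}}{\pi}\int_{\mathbb R}|\eta|^{2s-4}(1-\cos\eta)^2\,e^{-\ri(j-k)\eta}\,\rd\eta .
\]
Because this integrand is even and real and depends on the indices only through $|j-k|$, the matrix is automatically symmetric Toeplitz with entries carried by $p=|k-j|$, and one verifies convergence for $s\in(0,\tfrac32)$: the zero of $(1-\cos\eta)^2\sim\eta^4/4$ absorbs the singularity of $|\eta|^{2s-4}$ at the origin, while $2s-4<-1$ secures decay at infinity.

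Next I would expose the fourth-difference structure hidden in $(1-\cos\eta)^2$. Factoring $4(1-\cos\eta)^2=(2-e^{\ri\eta}-e^{-\ri\eta})^2=\sum_{i=-2}^{2}c_i e^{\ri i\eta}$ reproduces exactly the weights $c_0=6$, $c_{\pm1}=-4$, $c_{\pm2}=1$ of \eqref{gpa00}. Inserting this, together with the shift $e^{-\ri p\eta}$, and using linearity reduces everything to the single Fourier-type integral $\int_{\mathbb R}|\eta|^{2s-4}e^{\ri q\eta}\,\rd\eta$ for the integer $q=i-p$. The value I expect is
\[
\int_{\mathbb R}|\eta|^{2s-4}e^{\ri q\eta}\,\rd\eta=\frac{\pi}{\Gamma(4-2s)\cos(s\pi)}\,|q|^{3-2s},
\]
which follows from the classical cosine-transform formula $\int_0^\infty \eta^{\beta}\cos(q\eta)\,\rd\eta=\Gamma(\beta+1)\cos\tfrac{(\beta+1)\pi}{2}\,|q|^{-\beta-1}$ with $\beta=2s-4$, the reflection identity $\Gamma(2s-3)\Gamma(4-2s)=-\pi/\sin(2\pi s)$, and the simplifications $\cos\tfrac{(2s-3)\pi}{2}=-\sin(s\pi)$ and $\sin(2\pi s)=2\sin(s\pi)\cos(s\pi)$. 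Substituting back and reindexing $i\mapsto-i$ (licit since $c_i=c_{-i}$) collapses the five terms into $t_p=\sum_{i=-2}^{2}c_i|p+i|^{3-2s}$ and leaves the prefactor $h^{1-2s}/(2\Gamma(4-2s)\cos(s\pi))$, which is precisely \eqref{StiffMatrix}--\eqref{gpa00}.

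The main obstacle is that the displayed value of $\int_{\mathbb R}|\eta|^{2s-4}e^{\ri q\eta}\,\rd\eta$ is not an ordinary Lebesgue integral: for $s\in(0,\tfrac32)$ one has $2s-4<-1$, so each of the five split pieces diverges at $\eta=0$ and the cosine-transform formula is valid only through analytic continuation. I would make this rigorous by regularization—either insert a factor $e^{-\varepsilon|\eta|}$ (or shift the exponent to $2s-4+2\varepsilon$), evaluate the now-convergent pieces, and send $\varepsilon\to0^+$, passing the limit inside using that the assembled integrand $|\eta|^{2s-4}(1-\cos\eta)^2e^{-\ri p\eta}$ is already integrable—or invoke directly the meromorphic Fourier transform of the homogeneous distribution $|\eta|^{\lambda}$. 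Finally, $s=\tfrac12$ is genuinely degenerate because $\cos(s\pi)$ vanishes there; but so does the numerator, since $\sum_i c_i=\sum_i c_i\,i=\sum_i c_i\,i^2=0$ forces $t_p\big|_{s=1/2}=\sum_i c_i(p+i)^2=0$. I would resolve the $0/0$ by L'Hôpital in $s$, where differentiating $|p+i|^{3-2s}$ brings down $-2\ln|p+i|$ and differentiating $\cos(s\pi)$ brings down $-\pi$, yielding \eqref{StiffMatrix01/2}.
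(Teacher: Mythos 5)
Your strategy and final bookkeeping are sound, and in fact the first half of your argument coincides with the paper's own proof in Appendix A: the same frequency-space reduction to $\frac{2h^{1-2s}}{\pi}\int_{\mathbb R}|\eta|^{2s-4}(1-\cos\eta)^2e^{-\ri(j-k)\eta}\,\rd\eta$, the same five-term splitting (the paper writes $\cos(p\eta)(1-\cos\eta)^2=\frac14\sum_{i=-2}^{2}c_i\cos(|p+i|\eta)$, you write it with complex exponentials), the same reflection-formula algebra, and essentially the same $0/0$ resolution at $s=\frac12$ via the vanishing moments $\sum_i c_i(p+i)^l=0$, $l\le 2$. The difference is the core evaluation step: the paper never writes down the divergent split integrals at all; it integrates $\int_0^\infty y^{2s-4}f(y;p)\,\rd y$ by parts once, twice, or three times according as $s\in(1,\frac32)$, $(\frac12,1)$, $(0,\frac12)$, each time landing on the classical conditionally convergent integrals $\int_0^\infty y^{\mu-1}\sin(ay)\,\rd y$ or $\int_0^\infty y^{\mu-1}\cos(ay)\,\rd y$ with $\mu\in(0,1)$, which have elementary closed forms. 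That is precisely the elementary realization of the analytic continuation you are implicitly invoking.

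The genuine gap is the mechanism you propose for justifying $\int_{\mathbb R}|\eta|^{2s-4}e^{\ri q\eta}\,\rd\eta=\frac{\pi}{\Gamma(4-2s)\cos(s\pi)}|q|^{3-2s}$. Inserting $e^{-\varepsilon|\eta|}$ does \emph{not} yield ``now-convergent pieces'': each split piece diverges at $\eta=0$, where $|\eta|^{2s-4}$ with $2s-4\in(-4,-1)$ is non-integrable, and exponential damping only controls the behavior at infinity; likewise the infinitesimal shift $2s-4\mapsto 2s-4+2\varepsilon$ keeps the exponent below $-1$, so the pieces still diverge. Consequently your dominated-convergence remark---correct for the assembled integrand, thanks to its fourth-order zero at the origin---cannot be applied where you actually need it, namely term by term. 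A correct implementation requires a genuine two-parameter continuation: fix $\varepsilon>0$, split only for exponents $\lambda\in(-1,0)$ (where the damped pieces do converge and equal $2\Gamma(\lambda+1)(\varepsilon^2+q^2)^{-(\lambda+1)/2}\cos\bigl((\lambda+1)\arctan(|q|/\varepsilon)\bigr)$), verify that both the assembled damped integral and this closed-form sum are analytic in $\lambda$ on $\{\mathrm{Re}\,\lambda>-5\}$ (removability of the $\Gamma$-poles again uses the vanishing moments), invoke the identity theorem to reach $\lambda=2s-4$, and only then send $\varepsilon\to0^+$. Alternatively, flesh out the homogeneous-distribution route you mention, keeping in mind that $(1-\cos\eta)^2e^{-\ri p\eta}$ is not a Schwartz function, so the pairing itself needs justification. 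Finally, note that $s=1$ is also a degenerate point of your term-wise formula ($\Gamma(2s-3)$ has a pole and $\cos((2s-3)\pi/2)$ vanishes there), so, like $s=\frac12$, it requires a separate limiting argument---the paper's Case (v)---which your write-up omits.
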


\begin{rem}{\em
Letting $s\to 0$ and $s\to 1$, the matrix $\bs S$ in Theorem \ref{th21} reduces to  the usual FEM mass matrix $\bs M$ and  stiffness matrix $\bs S$:
\begin{equation*}\label{skj1200}
M_{k j}=h\begin{cases}{2/3,} & {j=k}, \\ {1/6,} & {j=k \pm 1},  \\ {0,} & {\text {otherwise}},
\end{cases}
\qquad
S_{kj}
 =\frac{1}{h}
 \begin{cases}{2,}\;\; &   {j=k}, \\
 {-1,} & {j=k \pm 1}, \\ {0,} & {\text {otherwise,}}
 \end{cases}
\end{equation*}
respectively.  \qed
}
\end{rem}

\begin{rem}{\em We point out that in 1D, it is feasible to compute $S_{jk}$ in the physical space using  \eqref{phyform}  {\rm (}cf.
\cite{tian2015class,wang2019finite}{\rm).}  However,  the implementation in the physical space becomes very complicated    {\rm(}cf. \cite{acosta2017short,Ainsworth2018}{\rm)}. In fact,  the frequency domain approach can be extended to multiple dimensional uniform rectangular elements, which leads to computing a one-dimensional integral on $(0,\pi/2)$ rather than $2\times 2$-dimensional  integrals  in two dimensions.  We shall report this in a  separate work. \qed
}
\end{rem}

\subsection{Diagonal dominance of the stiffness matrix}
We first make necessary preparations through the following two lemmas.
\begin{lmm}\label{elem1}   Let $s\in (0,\frac 3 2)$ and $s\not=\frac 1 2.$
\begin{itemize}
\item[(i)] The element
\begin{equation}\label{t1}
t_1=t_1(s)=7+3^{3-2s}-2^{5-2s}
\end{equation}
has a unique root $s_0\approx 0.2347$ in the interval $(0,\frac 1 2).$  Moreover, we have
\begin{equation}\label{t1sign}
\begin{cases}
 t_1>0, \;\; &{\rm if}  \;\; s\in (0, s_0)\cup(\frac 1 2, \frac 32),\\
 t_1<0, \;\; &{\rm if}  \;\; s\in (s_0,\frac 1 2).
   \end{cases}
\end{equation}
\item[(ii)] For $p\geq 2$, we have
\begin{equation}\label{ppropA}
\begin{cases}
t_p<t_{p+1}<0, \;\;\;\; & {\rm if}\;\;s\in(0,\frac 12)\cup(1,\frac 3 2), \\
t_p>t_{p+1}>0, \;\;\;\;  & {\rm if}\;\;s\in(\frac 1 2,1),\\
t_p=0,\;\;  & {\rm if}\;\;s=1.
\end{cases}
\end{equation}
\end{itemize}
%
\end{lmm}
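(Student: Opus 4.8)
The plan is to handle the two parts separately, each reduced to an elementary analytic statement. For part (i), I would substitute $\beta = 3-2s$, so that $\beta$ ranges over $(0,3)$ as $s$ ranges over $(0,3/2)$, and rewrite the element in \eqref{t1} as
\[
t_1 = 7 + 3^{\beta} - 4\cdot 2^{\beta} = 7 - \psi(\beta), \qquad \psi(\beta) := 4\cdot 2^{\beta} - 3^{\beta}.
\]
The whole statement then reduces to locating the level set $\{\psi = 7\}$. I would first show $\psi$ is unimodal on $(0,3)$: the equation $\psi'(\beta) = 4\ln 2\,2^{\beta} - \ln 3\,3^{\beta} = 0$ is equivalent to $(3/2)^{\beta} = 4\ln 2/\ln 3$, which has a single solution $\beta^\ast \approx 2.28$, with $\psi' > 0$ before it and $\psi' < 0$ after. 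Since $\psi(2) = 7$ and $\beta^\ast > 2$, this root lies on the increasing branch (it corresponds to the excluded value $s=1/2$), while a direct numerical check gives $\psi(\beta^\ast) > 7$. Hence $\psi = 7$ has exactly two roots in $(0,3)$, namely $\beta = 2$ and a unique $\beta_0 \in (\beta^\ast,3)$ on the decreasing branch; putting $s_0 = (3-\beta_0)/2 \approx 0.2347$ gives the unique root in $(0,1/2)$. The sign table \eqref{t1sign} follows by reading off where $\psi < 7$ (i.e.\ $t_1 > 0$), namely $\beta \in (0,2)\cup(\beta_0,3)$, which translates to $s \in (1/2,3/2)\cup(0,s_0)$, and where $\psi > 7$, namely $\beta \in (2,\beta_0)$, i.e.\ $s \in (s_0,1/2)$.

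For part (ii), I would recognise $t_p$ in \eqref{gpa00} as the fourth central difference of $\phi(x)=x^{\alpha}$ with $\alpha = 3-2s \in (0,3)$: for $p\ge 2$ all arguments are nonnegative, so the absolute values may be dropped and $t_p = \Delta^4\phi(p-2)$. The key device is the exact identity
\[
t_p = \int_{[0,1]^4} \phi^{(4)}\big(p - 2 + u_1 + u_2 + u_3 + u_4\big)\, \d u_1\,\d u_2\,\d u_3\,\d u_4,
\]
obtained by iterating $\int_0^1 g'(x+u)\,\d u = g(x+1)-g(x)$ four times. Since $\phi^{(4)}(x) = P(\alpha)\,x^{\alpha-4}$ with $P(\alpha) = \alpha(\alpha-1)(\alpha-2)(\alpha-3)$, and for $p\ge 2$ the argument $p-2+u_1+\cdots+u_4$ is positive (so $x^{\alpha-4}>0$), the sign of $t_p$ equals the sign of $P(\alpha)$.

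Reading off the sign of the quartic $P(\alpha)$ gives $P>0$ on $\alpha\in(1,2)$, i.e.\ $s\in(1/2,1)$; $P<0$ on $\alpha\in(0,1)\cup(2,3)$, i.e.\ $s\in(1,3/2)\cup(0,1/2)$; and $P=0$ at $\alpha=1$, i.e.\ $s=1$ (where $\phi$ is linear and $t_p\equiv 0$). For the monotonicity I would subtract consecutive terms,
\[
t_{p+1} - t_p = P(\alpha)\int_{[0,1]^4}\big[(p-1+U)^{\alpha-4} - (p-2+U)^{\alpha-4}\big]\,\d\bs u, \quad U = u_1+u_2+u_3+u_4,
\]
and note that since $\alpha-4<0$ the map $x\mapsto x^{\alpha-4}$ is strictly decreasing, so the bracket is negative; hence $\sgn(t_{p+1}-t_p) = -\sgn P(\alpha)$. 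Combining with the sign of $t_p$ yields the three cases of \eqref{ppropA}: when $P>0$ the $t_p$ are positive and strictly decreasing ($t_p > t_{p+1} > 0$), when $P<0$ they are negative and strictly increasing toward $0$ ($t_p < t_{p+1} < 0$), and $t_p = 0$ when $s=1$.

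The main obstacles are modest but genuine. In (i) it is the two-crossing structure: one must certify that the unique critical value $\psi(\beta^\ast)$ strictly exceeds the level $7$ (otherwise there could be zero or one root), which I expect to settle by an explicit numerical bound rather than in closed form. In (ii) it is the boundary case $p=2$, where $\phi(x)=x^{\alpha}$ fails to be $C^4$ at the origin: for $p\ge 3$ the integrand lives on $[p-2,p+2]\subset(0,\infty)$ and the representation is immediate, whereas for $p=2$ one must justify it by a limiting argument (replacing $\phi$ by $(x+\varepsilon)^{\alpha}$ and letting $\varepsilon\to 0^+$), the integral remaining convergent because $\alpha>0$.
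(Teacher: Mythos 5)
Your proposal is correct. Part (i) is essentially the paper's own argument in disguise: the substitution $\beta=3-2s$ turns your unimodality analysis of $\psi(\beta)=4\cdot2^{\beta}-3^{\beta}$ into exactly the paper's study of $t_1(s)$, whose derivative has the unique root $s^{\ast}=(3-\beta^{\ast})/2\approx 0.3584$; both arguments identify $s=1/2$ (your $\beta=2$) as the root on one monotone branch, certify numerically that the extremal value overshoots the level ($t_1(s^{\ast})\approx-0.1856$, i.e.\ $\psi(\beta^{\ast})\approx 7.19>7$), and locate $s_0$ on the other branch by a numerical root-finder.

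Part (ii), however, takes a genuinely different route. The paper writes $t_p=p^{\alpha}\big\{6-4[(1+\tfrac1p)^{\alpha}+(1-\tfrac1p)^{\alpha}]+[(1+\tfrac2p)^{\alpha}+(1-\tfrac2p)^{\alpha}]\big\}$ and expands via the binomial series into $\sum_{n\ge2}c_n^{(\alpha)}p^{\alpha-2n}$, where every coefficient has sign $-\sgn((2s-1)(s-1))$; sign and monotonicity then follow by termwise comparison of $p^{-(2n-\alpha)}$ with $(p+1)^{-(2n-\alpha)}$. You instead recognise $t_p$ as the fourth central difference of $x^{\alpha}$ and invoke the Hermite--Genocchi-type identity $t_p=\int_{[0,1]^4}\phi^{(4)}(p-2+u_1+\cdots+u_4)\,\d\bs u$, so that the sign is that of $P(\alpha)=\alpha(\alpha-1)(\alpha-2)(\alpha-3)$ and monotonicity follows from the strict decrease of $x^{\alpha-4}$; note $-\sgn((\alpha-1)(\alpha-2))=\sgn P(\alpha)$ on $(0,3)$, so the two sign criteria agree. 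Your route is more conceptual and sidesteps series manipulation, at the cost of the limiting argument you correctly supply for $p=2$ (where $x^{\alpha}$ is not $C^4$ at the origin but the integral converges since it behaves like $\int_0 r^{\alpha-1}\,\d r$); symmetrically, the paper's expansion at $p=2$ silently uses absolute convergence of the binomial series at the endpoint, which holds for $\alpha>0$. What the paper's series buys, and your representation does not immediately provide, is the explicit leading coefficient of the expansion: the same device applied to $\mathcal G_q$ in \eqref{gpa22}, with the explicit $\hat c_3^{(\alpha)}=(2s-1)(2s-2)(2s-3)$, is what later produces the quantitative threshold $N_0(s)$ in Theorem \ref{DM01}(ii); your integral identity could be made to yield such tail bounds, but only after estimating the integral, which amounts to redoing that expansion.
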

\begin{proof}
(i) By direct calculation, we find  $t_1'(s)=8 (\ln 2)\, 2^{3-2s} -2(\ln 3)\, 3^{3-2s},$  which has a unique root
\begin{equation}\label{sstar}
s^\ast=\frac 3 2 -\frac{\ln (8 \ln 2)-\ln(2\ln 3)}{2(\ln 3-\ln 2)}\approx 0.3584.
\end{equation}
Moreover, $t_{1}(s)$ is descending in $(0,s^*),$ but asending in $(s^*,\frac32)$.
As $t_1(0)=2$ and $t_1(s^*)\approx -0.1856,$ $t_1(s)$ has a unique root in $(0,s^*).$ Using a root-finding method (e.g., the bisection method), we can easily find $s_0\approx 0.2347.$ Note that $s=\frac 1 2$ is the other unique root of $t_1(s)$ in the interval $(s^*,\frac32).$ Then we have the property
\eqref{t1sign} (cf. Figure \ref{t1sN0}(a)).

\smallskip
(ii) We next consider $p\ge 2$.    
Denote $\alpha = 3-2s$, and rewrite $t_p$ in \eqref{gpa00} as
\begin{equation}\begin{split}\label{gpa}
t_{p}& = p^{\alpha}\Big\{ 6 - 4\Big\{ \Big{(}1+\frac{1}{p}\Big{)}^{\alpha} +\Big{(}1-\frac{1}{p}\Big{)}^{\alpha}\Big\} + \Big\{\Big{(}1+\frac{2}{p}\Big{)}^{\alpha} + \Big{(}1-\frac{2}{p}\Big{)}^{\alpha} \Big\} \Big\}
=\sum_{n = 2}^{\infty}\frac {c_{n}^{(\alpha)}}{ p^{2n-\alpha}},
\end{split}\end{equation}
where  we used the Taylor expansion of $(1+x)^\alpha$, and
$$
  c_{n}^{(\alpha)} :=
 (2^{2n+1}-2^3) \frac{\alpha(\alpha-1)\cdots(\alpha-2n+1)}{(2n)!}.
$$
As $\alpha=3-2s\in (0,3)$  and $\alpha \neq 2$, we have
$$
\sign(c_{n}^{(\alpha)})=-\sign((\alpha-1)(\alpha-2))= -\sign((2s-1)(s-1)).
$$
Note that $t_p$ has the same sign as  $c_{n}^{(\alpha)}.$
Thus, if    $s\in(0,1/2)\cup(1,3/2)$, then $c_{n}^{(\alpha)}<0,$  so $t_p<0$ and
$$
 \frac {c_{n}^{(\alpha)}}{ p^{2n-\alpha}} < \frac {c_{n}^{(\alpha)}}{ {(p+1)}^{2n-\alpha}}, \;\;\; {\rm so} \;\;\;
 t_{p}<t_{p+1}.
 $$
 On the other hand, if $s\in(\frac12,1)$, then  $c_{n}^{(\alpha)}>0,$ so $t_p<0,$ and
$$
 \frac {c_{n}^{(\alpha)}}{ p^{2n-\alpha}} > \frac {c_{n}^{(\alpha)}}{ {(p+1)}^{2n-\alpha}}, \;\;\; {\rm so} \;\;\;
 t_{p}>t_{p+1}.
 $$
 By \eqref{direcal}, we have $t_p=0$ for $s=1$ and $p\ge 2.$
%
Thus, the property \eqref{ppropA} holds.
\end{proof}

\begin{figure}[!th]
\centering
\subfigure[Graph of $t_{1}(s)$]{
\includegraphics[width=0.45\textwidth]{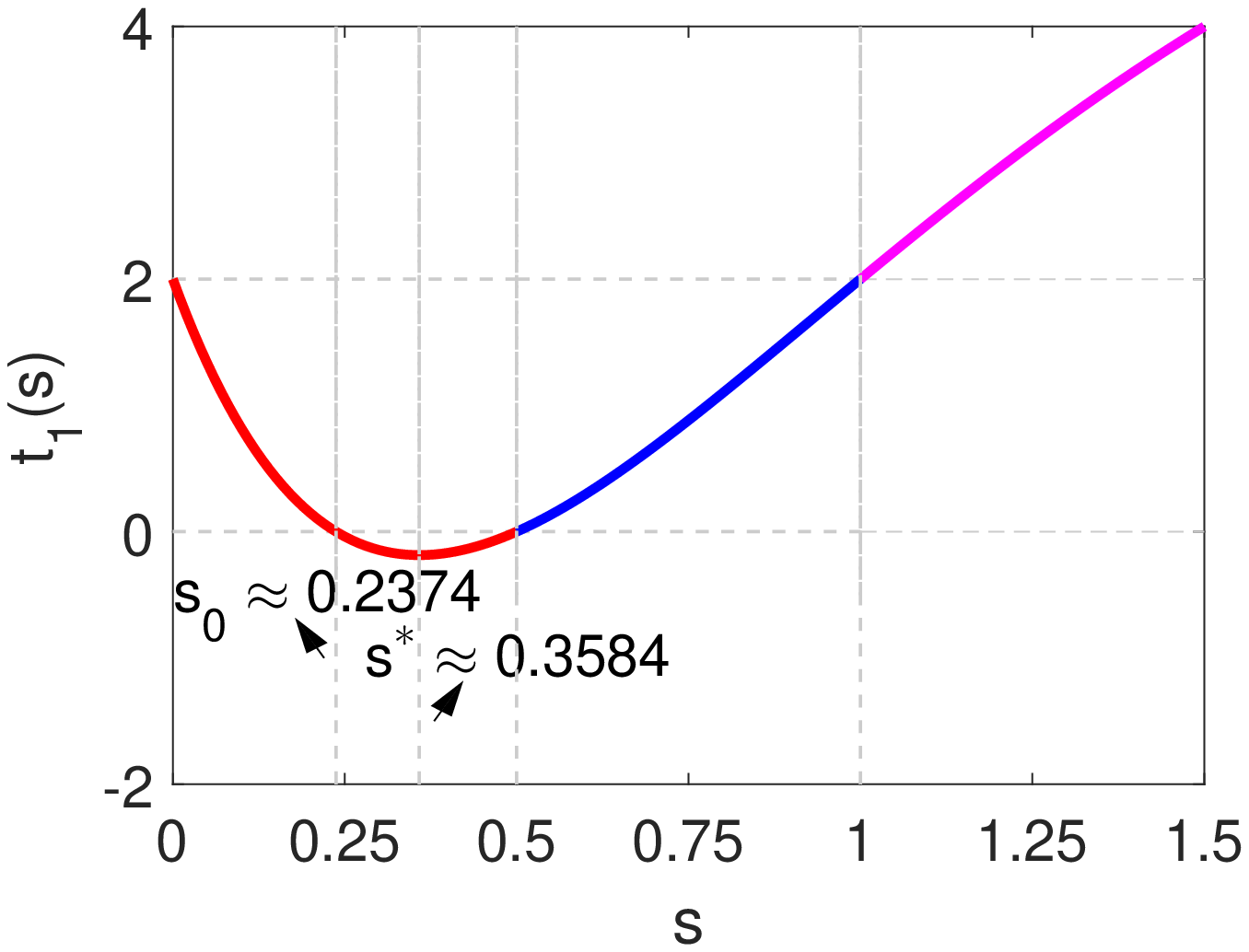}}\quad
\subfigure[Graph of $N_0(s)$]{
\includegraphics[width=0.45\textwidth]{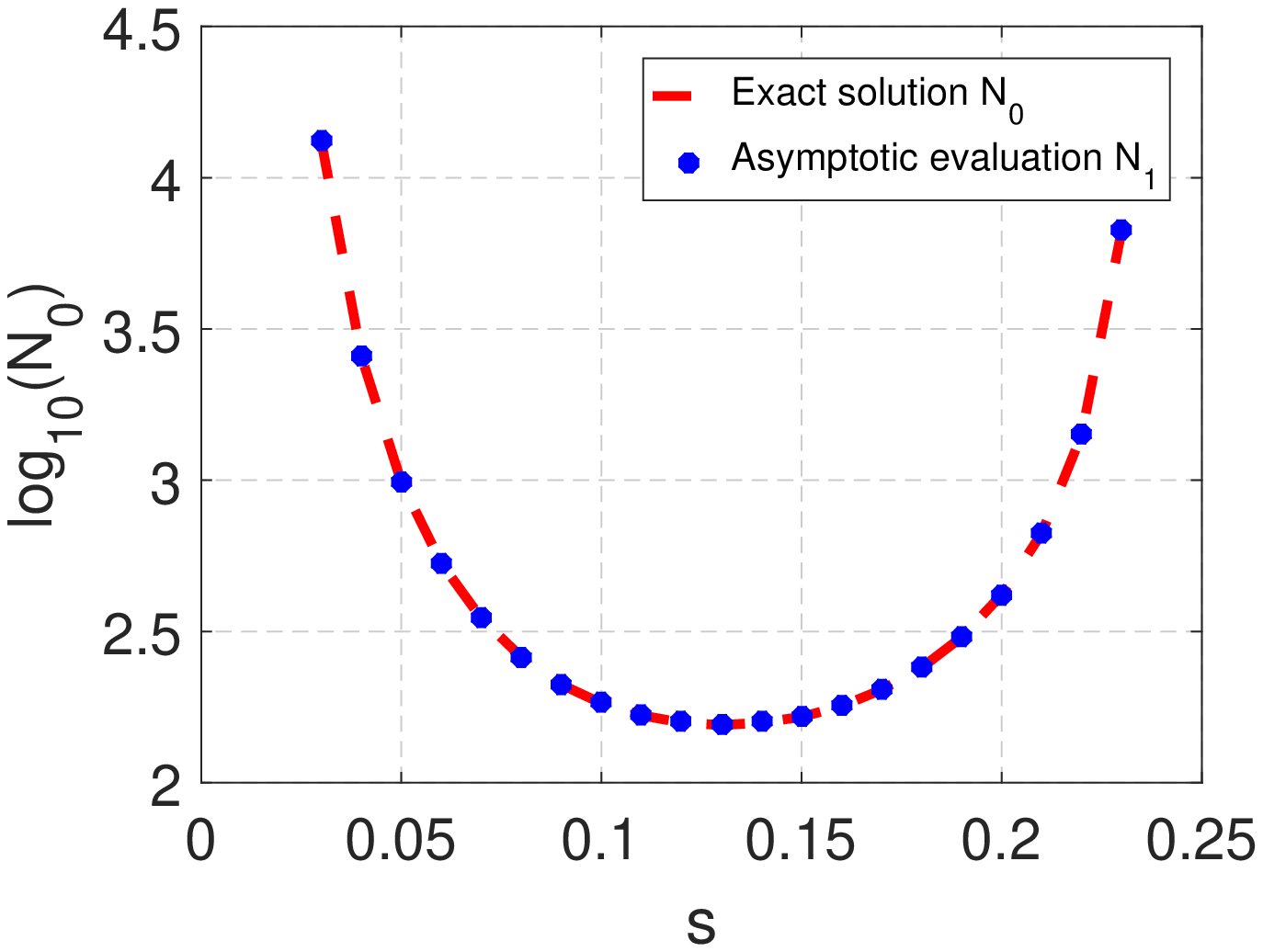}}
\caption{Left: graph of $t_1(s)$. Right: profile of  $N_0(s)$  for $s\in (0,s_0)$, for which the stiffness matrix $\bs S$ is diagonally dominant when $N<N_0(s).$} \label{t1sN0}
\end{figure}

\begin{lmm}\label{elem2} For $s=\frac12$, we denote
\begin{equation}\label{StiffMatrix01/20}
 r_p:=S_{kj}=\frac 1 {2\pi} \sum_{i=-2}^2 c_i (p+i)^2 \ln |p+i|,\quad 0\le p=|k-j|\le N-2.
\end{equation}
Then we have
\begin{equation}\label{tpP12}
r_0>0; \quad r_{p}<r_{p+1}<0,\quad 1\le p\le N-3.
\end{equation}
\end{lmm}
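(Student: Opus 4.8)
The plan is to exploit the observation that the stencil $(c_{-2},c_{-1},c_0,c_1,c_2)=(1,-4,6,-4,1)$ is precisely the fourth-order central difference operator. First I would dispose of $r_0>0$ by direct evaluation: by the stated convention, the $i=-1,0,1$ contributions vanish (since $\ln 1=0$ and $0^2\ln 0=0$), leaving $r_0=\frac{1}{2\pi}(4\ln 2+4\ln 2)=\frac{4\ln 2}{\pi}>0$. For the remaining claims I would introduce the even function $G(x):=x^2\ln|x|$ on $\mathbb{R}$ with $G(0):=0$, which is $C^\infty$ on $\mathbb{R}\setminus\{0\}$, so that $r_p=\frac{1}{2\pi}\sum_{i=-2}^{2}c_i\,G(p+i)=\frac{1}{2\pi}\,\delta^4 G(p)$, the fourth central difference of $G$ at $p$.

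For the negativity when $p\ge 3$, note that $[p-2,p+2]\subset(0,\infty)$, so $G$ is smooth there and the mean value theorem for finite differences yields $\delta^4 G(p)=G^{(4)}(\xi_p)$ for some $\xi_p\in(p-2,p+2)$. A short computation gives $G'(x)=2x\ln x+x$, $G''(x)=2\ln x+3$, $G'''(x)=2/x$, and $G^{(4)}(x)=-2/x^2<0$, whence $r_p=\frac{1}{2\pi}G^{(4)}(\xi_p)<0$.

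For the monotonicity when $p\ge 3$, I would subtract the two central differences:
\[
r_{p+1}-r_p=\frac{1}{2\pi}\bigl(\delta^4 G(p+1)-\delta^4 G(p)\bigr)=\frac{1}{2\pi}\,\Delta^5 G(p-2),
\]
where $\Delta^5$ carries the fifth-difference stencil $(-1,5,-10,10,-5,1)$. Since $[p-2,p+3]\subset(0,\infty)$, the same finite-difference mean value theorem gives $\Delta^5 G(p-2)=G^{(5)}(\eta_p)$ for some $\eta_p\in(p-2,p+3)$, and because $G^{(5)}(x)=4/x^3>0$ we conclude $r_p<r_{p+1}$. Thus both $r_p<0$ and $r_p<r_{p+1}$ hold for all $p\ge 3$.

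The main obstacle is that $G$ fails to be $C^2$ at the origin, so the low indices $p=1,2$ — whose stencils reach the node $0$ — lie outside the scope of the finite-difference mean value theorem and must be treated separately. For these I would compute directly $r_1=\frac{1}{2\pi}(9\ln 3-16\ln 2)$, $r_2=\frac{1}{2\pi}(56\ln 2-36\ln 3)$, and $r_3=\frac{1}{2\pi}(25\ln 5+54\ln 3-144\ln 2)$, and then verify $r_1<r_2<r_3<0$ by reducing each inequality to a comparison of integer powers (for instance $r_1<0\iff 3^9<2^{16}$, $r_2-r_1>0\iff 2^{72}>3^{45}$, $r_3-r_2>0\iff 3^{90}5^{25}>2^{200}$). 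Chaining these three hand-checked inequalities with the $p\ge 3$ results $r_p<r_{p+1}$ and $r_{p+1}<0$ then yields $r_p<r_{p+1}<0$ for every $1\le p\le N-3$, completing the proof. The only delicate point is that these low-index comparisons, though elementary, require certifying the sign of integer combinations of $\ln 2$, $\ln 3$, and $\ln 5$, some of which are fairly tight.
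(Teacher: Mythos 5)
Your proof is correct, but the key step is genuinely different from the paper's. For $p\ge 3$ the paper writes $r_p=\frac{p^2}{2\pi}\sum_{i}c_i(1+i/p)^2\ln|1+i/p|$ and Taylor-expands $(1+x)^2\ln(1+x)$ to obtain $r_p=\sum_{n\ge 2}c_n\,p^{-(2n-2)}$ with every coefficient $c_n=\frac{2-2^{2n-1}}{\pi n(2n-1)(n-1)}<0$, from which negativity and the monotonicity $r_p<r_{p+1}$ follow termwise; you instead recognize the stencil $(1,-4,6,-4,1)$ as the fourth central difference of $G(x)=x^2\ln|x|$ and invoke the divided-difference mean value theorem, getting $r_p=\frac{1}{2\pi}G^{(4)}(\xi_p)<0$ from $G^{(4)}(x)=-2/x^2$ and $r_{p+1}-r_p=\frac{1}{2\pi}G^{(5)}(\eta_p)>0$ from $G^{(5)}(x)=4/x^3$. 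Both arguments correctly confine themselves to $p\ge 3$, where the stencil stays away from the singular node $x=0$, and both fall back on direct evaluation of $r_1,r_2,r_3$ for the low indices; your values agree with the paper's, and where the paper merely says ``one verifies readily that $r_1<r_2<r_3<0$,'' you make the certification explicit ($3^9<2^{16}$, $2^{72}=256^9>243^9=3^{45}$, and the genuinely tight $3^{90}5^{25}>2^{200}$), which is a welcome sharpening. The trade-off: your MVT route is shorter and avoids all series bookkeeping, needing only two derivative computations, while the paper's expansion yields quantitative extra information (the full asymptotics $r_p\sim -\frac{1}{\pi p^2}$ as $p\to\infty$) and reuses the same Taylor technique employed for $t_p$ in Lemma 2.3 and for $d_{N/2}$ in Theorem 2.5, so it keeps the paper's arguments uniform. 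One small remark: the evenness of $G$ that you note is never actually needed in your $p\ge 3$ argument, since there all nodes $p+i$ are positive; it only serves to make the identity $r_p=\frac{1}{2\pi}\sum_i c_iG(p+i)$ valid in the notation for all $p\ge 0$.
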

\begin{proof}
Direct calculation from \eqref{StiffMatrix01/20} leads to
\begin{equation*}
\begin{split}
& r_0=\frac 4 \pi \ln 2, \quad r_1=\frac 1 {2\pi}\big(9\ln 3- 16\ln 2\big), \quad
r_2=\frac 2 {\pi}\big(14\ln 2- 9\ln 3\big),\\
& r_3 = \frac{1}{\pi}\Big(27\ln3-72\ln2+\frac{25}{2}\ln5\Big),
\end{split}
\end{equation*}
and one verifies readily that $r_1<r_2<r_3<0.$

For $p\ge 3$, we can rewrite $r_p$ as
\begin{equation}
\begin{split}
r_{p} &= \frac{p^2}{2\pi}\bigg\{ \Big( 1-\frac2p \Big)^2\ln\Big( 1-\frac2p \Big) + \Big( 1+\frac2p \Big)^2\ln\Big( 1+\frac2p \Big)
\\&\quad- 4 \Big( 1-\frac1p \Big)^2\ln\Big( 1-\frac1p \Big) - 4 \Big( 1+\frac1p \Big)^2\ln\Big( 1+\frac1p \Big) \bigg\} \\
& = \sum_{n=2}^{\infty}\frac{c_n}{p^{2n-2}},\quad  \text{with}\ \ \ c_{n} = \frac{2-2^{2n-1}}{\pi n(2n-1)(n-1)},
\end{split}
\end{equation}
using the Taylor expansion
\begin{equation}\label{taylorLn}
(1+x)^2\ln(1+x)=x+\frac 3 2 x^2+2\sum_{n=1}^\infty \frac{(-1)^{n+1} x^{n+2}} {n(n+1)(n+2)}.
\end{equation}
Since $ c_n < 0$ for $n\geq 2$,  we have $r_p < 0$, and
 $$
 \frac {c_{n}}{ p^{2n-2}} <  \frac {c_{n}}{ {(p+1)}^{2n-2}}, \;\;\; {\rm so} \;\;\;
 r_{p}<r_{p+1}.
 $$
This completes the proof.
\end{proof}

With the above preparations, we are now ready to present the main result.
\begin{thm}\label{DM01}
Let $s_{0}\approx 0.2347$ be the root of $t_{1}(s) = 7+3^{3-2s}-2^{5-2s}$ as in Lemma \ref{elem1}, and denote
\begin{equation}\label{AStiffMatrix0}
 A_s:=\frac{1 } {2 \Gamma(4-2s) \cos(s\pi)}.
\end{equation}
Then the stiffness matrix $\bs S=(S_{kj})$ stated in Theorem {\rm\ref{th21}} has the following properties.
\begin{itemize}
\item[(i)] If $s\in [s_0,1)$, we have
\begin{equation}\label{Skkj}
S_{kk}>0; \quad S_{kj}<0,\quad  k\not=j,
\end{equation}
except for    $S_{k,k\pm 1}=0 $ for $s=s_0,$
 and the matrix $\bs S$ is strictly positive diagonally dominated, i.e.,
    \begin{equation}\label{SPDD}
 S_{kk} > \sum_{ k\not=j=1}^{N-1}|S_{kj}|, \quad  1\le  k\le N-1.
    \end{equation}
\item[(ii)]  If $s\in (0,s_0)$, we have $S_{kk}, S_{k, k\pm 1}>0, S_{kj}<0$ for $|k-j|\ge 2,$ and
    \begin{equation}\label{SPDDs0}
    S_{kk} -\sum_{ k\not=j=1}^{N-1}|S_{kj}|>-\frac{4A_s t_1(s)}{h^{2s-1}},  \quad  1\le k\le N-1,
    \end{equation}
    where $t_1\in (0,2)$. However,  the property \eqref{SPDD}
    holds only for $N\le N_0$ with
    \begin{equation}\label{N0form}
    N_0=\Big[2\Big(\frac{\gamma(s)}{t_1(s)}\Big)^{\frac 1 {2s}}\Big],\quad \gamma(s):= (1-2s)(1-s)(3-2s).
    \end{equation}

\item[(iii)]  If $s\in (1,3/2)$, we have $S_{kk}>0, S_{k, k\pm 1}<0, S_{kj}>0$ for $|k-j|\ge 2.$
 The property \eqref{SPDD}  holds only for $k=1$ and $k=N-1,$ but in the contrary, we have that  for $N\ge 4,$
    \begin{equation}\label{SPDD00}
     S_{kk} < \sum_{ k\not=j=1}^{N-1}|S_{kj}|, \quad  2\le k\le N-2.
    \end{equation}

\end{itemize}
\end{thm}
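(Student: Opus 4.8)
The plan is to strip the common prefactor $A_s h^{1-2s}$ (with $A_s$ as in \eqref{AStiffMatrix0}) off every entry of $\bs S$, so that all three assertions collapse to scalar inequalities among the Toeplitz symbols $t_p$ of \eqref{gpa00}. First I would record the sign of $A_s$: since $\Gamma(4-2s)>0$ and $\cos(s\pi)>0$ on $(0,\tfrac12)$ but $\cos(s\pi)<0$ on $(\tfrac12,\tfrac32)$, one has $A_s>0$ for $s\in(0,\tfrac12)$ and $A_s<0$ for $s\in(\tfrac12,\tfrac32)$. Combined with the sign of $t_1$ in \eqref{t1sign} and that of $t_p$, $p\ge2$, in \eqref{ppropA}, this reads off every sign statement in \eqref{Skkj}, in (ii) and in (iii): indeed $t_0=2^{4-2s}-8$ changes sign exactly at $s=\tfrac12$, matching the sign of $A_s$, so $S_{kk}=A_sh^{1-2s}t_0>0$ throughout, while the off-diagonal signs follow entry by entry. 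The theorem thereby reduces to comparing $|t_0|$ with the two partial sums $\sum_{p=1}^{k-1}|t_p|$ and $\sum_{p=1}^{N-1-k}|t_p|$.

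The structural engine is that, writing $g(x)=|x|^{3-2s}$, the symbol $t_p$ in \eqref{gpa00} is exactly the fourth central difference of $g$ at $p$, so its partial sums telescope: with $d_p:=g(p+1)-2g(p)+g(p-1)$ one obtains $\sum_{p=1}^m t_p=(d_{m+1}-d_m)-(d_1-d_0)$, and because $3-2s<3$ the third-difference tail $d_{m+1}-d_m\to0$. This yields $\sum_{p=1}^\infty t_p=d_0-d_1=4-2^{3-2s}$ and, with $t_0=2^{4-2s}-8$, the clean identity $t_0+2\sum_{p=1}^\infty t_p=0$. Since on each of $(s_0,\tfrac12)$ and $(\tfrac12,1)$ the $t_p$ with $p\ge1$ share one sign (by \eqref{t1sign}--\eqref{ppropA}), this becomes $|t_0|=2\sum_{p=1}^\infty|t_p|$. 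Part (i) is then immediate: for every row the off-diagonal mass $\sum_{p=1}^{k-1}|t_p|+\sum_{p=1}^{N-1-k}|t_p|$ is a strict partial sum of $2\sum_{p\ge1}|t_p|$, hence strictly below $|t_0|$, which is \eqref{SPDD}; the vanishing nearest-neighbour entry at $s=s_0$ is simply $t_1(s_0)=0$, and the borderline $s=\tfrac12$ is handled identically, using Lemma \ref{elem2} in place of \eqref{ppropA}, the same telescoping giving $r_0=2\sum_{p\ge1}|r_p|$.

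Parts (ii) and (iii) use the same identity but with mixed signs, since now $t_1>0$ while $t_p<0$ for $p\ge2$. Re-running the bookkeeping gives the exact deficit $t_0-2\sum_{p\ge1}|t_p|=-4t_1$, whence \eqref{SPDDs0} follows by replacing the finite partial sums by the infinite one and multiplying by $A_sh^{1-2s}>0$; that $t_1\in(0,2)$ is read off from the monotonicity of $t_1(s)$ in Lemma \ref{elem1}. For (iii) the corner rows $k=1,N-1$ carry a single partial sum bounded by $\sum_{p\ge1}|t_p|$, and a one-variable inequality $|t_0|>\sum_{p\ge1}|t_p|$ (equivalently $5\cdot2^{\alpha}-2\cdot3^{\alpha}>2$ with $\alpha=3-2s$) secures \eqref{SPDD} there; each interior row satisfies $\sum_{p=1}^{k-1}|t_p|+\sum_{p=1}^{N-1-k}|t_p|\ge 2t_1$, and the companion one-variable inequality $2t_1>|t_0|$ on $(1,\tfrac32)$ yields the reverse dominance \eqref{SPDD00} for $N\ge4$.

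The delicate point, and the main obstacle, is the precise threshold $N_0$ of \eqref{N0form} in part (ii). There the worst row is the central one, and diagonal dominance persists exactly while the tail $\sum_{p>M}|t_p|$ with $M\approx(N-2)/2$ still exceeds $2t_1$. I would extract this tail from the convergent expansion \eqref{gpa}, whose coefficients $c_n^{(\alpha)}$ are all of one sign for $s\in(0,\tfrac12)$, so the $n=2$ term dominates and integral comparison gives $\sum_{p>M}|t_p|\sim 2\gamma(s)M^{-2s}$ with $\gamma(s)=(1-2s)(1-s)(3-2s)$; equating this to $2t_1$ and solving for $M$ produces $N_0=\bigl[2(\gamma(s)/t_1(s))^{1/(2s)}\bigr]$. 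The genuine work is to make this asymptotic two-sided and sharp enough to locate the integer transition, i.e. to prove both that \eqref{SPDD} holds for $N\le N_0$ and that it fails beyond, controlling the series remainder and the $O(1)$ gap between $M$ and $N/2$ rather than settling for the leading order alone.
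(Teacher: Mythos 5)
Your proposal is correct, and it rests on the same skeleton as the paper's proof --- sign bookkeeping for $A_s,t_0,t_1,t_p$ from Lemmas \ref{elem1}--\ref{elem2}, then exploiting the fourth-difference structure of $t_p$ to telescope row sums --- but your execution streamlines it at two genuine points. The paper keeps all partial sums exact through the third-difference quantity $\mathcal G_q$ of \eqref{GM}, proves the row monotonicity \eqref{monoticityA} via \eqref{deltaT2}, and then checks only $d_1$ and $d_{N/2}$ case by case; you instead telescope to infinity and use the closed identities $t_0+2\sum_{p\ge1}t_p=0$ and, in the mixed-sign regimes, $t_0-2\sum_{p\ge1}|t_p|=-4t_1$. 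Both identities are correct (they are the paper's relation $t_1+\mathcal G_1=-\tfrac12 t_0$ in disguise), and they make part (i) a one-line strict-partial-sum comparison valid for every row at once --- including $s=\tfrac12$ via $r_p$, for which the paper runs a separate expansion \eqref{dN2final} --- and give \eqref{SPDDs0} for all rows without invoking monotonicity. In part (iii), your two scalar inequalities, $5\cdot2^\alpha-2\cdot3^\alpha>2$ for the corner rows (identical to the paper's $\tilde g(\alpha)<0$) and $2t_1>|t_0|$, i.e.\ $3+3^\alpha>3\cdot2^\alpha$ on $\alpha\in(0,1)$, for the interior rows, both hold strictly and replace the paper's more tangled computation of $d_2$ through $-3\mathcal G_1+\mathcal G_{N-3}$; this is arguably cleaner than the published argument. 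Finally, on the point you single out as the obstacle --- the threshold $N_0$ of \eqref{N0form} --- you are no worse off than the paper: your tail bound $\sum_{p\ge N/2}|t_p|\ge 2\gamma(s)(N/2)^{-2s}$ (one-signed Taylor coefficients plus integral comparison) is exactly the paper's bound $|\mathcal G_{N/2-1}|\ge |\hat c_3^{(\alpha)}|(N/2)^{-2s}$, and it rigorously yields the sufficiency direction, that \eqref{SPDD} holds for $N\le N_0$; the converse direction (failure for $N>N_0$) is not rigorously established in the paper either, since its one-sided truncation goes the wrong way for that purpose, and the sharpness is only supported numerically (Table \ref{TabN0}). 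The single detail you should still write out is why the central row is the worst one in part (ii): the monotonicity $d_2>d_3>\cdots>d_{N/2}$ only covers $p\ge2$ because $t_1$ has the opposite sign there, so $d_1>0$ needs the separate verification the paper performs via $g(\alpha)=9\cdot2^\alpha-2\cdot3^\alpha-18>0$.
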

\begin{proof}  We only  prove the results with even $N$, since it is straightforward to prove the statements with odd $N$.
Before we consider different cases of $s,$ we first derive some common properties.

From the matrix form of $\bs S$ in \eqref{StiffMatrix},
we find readily that for $1\le k\le N-1,$
\begin{equation}\label{deltaT}
\begin{split}
& d_k:= |S_{kk}|-\sum_{k\not=j=1}^{N-1}|S_{kj}|=\frac{|A_s|}{h^{2s-1}} \Big( |t_0| -\sum_{p=1}^{k-1}|t_p| - \sum_{p=1}^{N-1-k}|t_p|\Big),
\end{split}
\end{equation}
 and for $1\le k\le N-2,$
\begin{equation}\label{deltaT2}
\begin{split}
& d_{k+1}=d_k+ \frac{|A_s|}{h^{2s-1}} \big(|t_{N-1-k}|-|t_k|\big).
\end{split}
\end{equation}
It is also evident that 
\begin{equation}\label{symdk}
d_k=d_{N-k},\quad 1\le k\le N-1,
\end{equation}
so  it suffices to study $d_k$ with $1\le k\le N/2.$ Moreover, using  the property \eqref{ppropA},  we can derive from   \eqref{deltaT2} that for $s\in (0,\frac 32)$ and $s\not=\frac 1 2,1,$
\begin{equation}\label{monoticityA}
d_2>d_3>\cdots>d_{N/2}.
\end{equation}
In the proof, we shall  check the signs of $d_1$ and $d_{N/2}$ in most of the cases.  For this purpose, we define
\begin{equation}\label{GM}
f_q:=q^{3-2s}, \quad  {\mathcal G}_q:= f_{q-1}-3f_{q}+3f_{q+1} -f_{q+2}.
\end{equation}
Using  \eqref{ppropA} again, we find from direct calculation that for $s\in (\frac 1 2,1),$
\begin{equation}\label{sjk0}
\begin{split}
{\mathcal S}_m&:= \sum_{p = 2}^{m}|t_{p}|  =\sum_{p = 2}^{m} t_{p} = \sum_{p = 2}^{m} (f_{p-2}-4f_{p-1}+6f_p -4f_{p+1}+f_{p+2})
\\&=\sum_{q=0}^3 f_q- 4 \sum_{q=1}^3 f_q +6 \sum_{q=2}^3 f_q -4f_3 -4f_{m-1} \\
&\quad +
6 \sum_{q=m-1}^{m} f_q -4 \sum_{q=m-1}^{m+1} f_q + \sum_{q=m-1}^{m+2} f_q \\[4pt]
& = -3f_1+3f_2-f_3 -f_{m-1}+3f_{m}-3f_{m+1} +f_{m+2} = {\mathcal G}_1-{\mathcal G}_m,
\end{split}
\end{equation}
and for $s\in (0, \frac 1 2)\cup (1,\frac 3 2),$
\begin{equation}\label{sjk01}
\begin{split}
{\mathcal S}_m&:=\sum_{p = 2}^{m}|t_{p}|=- \sum_{p = 2}^{m} t_{p}={\mathcal G}_m-{\mathcal G}_1.
\end{split}
\end{equation}
From \eqref{deltaT} and the above, we have
\begin{equation}\label{d1exp}
\begin{split}
& d_1=\frac{|A_s|}{h^{2s-1}} \bigg( |t_0| -|t_1|-\sum_{p=2}^{N-2} |t_p|\bigg) =\frac{|A_s|}{h^{2s-1}} \left( |t_0| -|t_1|-{\mathcal S}_{N-2}\right), \\
&d_{N/2}  = \frac{|A_{s}|}{h^{2s-1}}\bigg(|t_{0}| -2|t_1|- 2\sum_{p = 2}^{N/2-1}|t_{p}|\bigg)
=\frac{|A_s|}{h^{2s-1}} \left( |t_0| -2|t_1|-2 {\mathcal S}_{N/2-1}\right).
\end{split}
\end{equation}
Note that by \eqref{gpa00}, we have
\begin{equation}\label{t0t1}
t_{0} =2^{4-2s}-8= -8f_{1}+2f_{2},\quad  t_{1} =7+3^{3-2s}-2^{5-2s}= 7f_{1}-4f_{2}+f_{3}.
\end{equation}

It is seen from \eqref{d1exp} that  the sign of $\mathcal G_q$ is important to determine the signs of $d_1, d_{N/2}$, so we rewrite it by using the Taylor expansion:
\begin{equation}\begin{split}\label{gpa22}
\mathcal G_q=  {(q+1)}^{\alpha}\Big\{\Big{(}1-\frac{2}{q+1}\Big{)}^{\alpha}-3
\Big{(}1-\frac{1}{q+1}\Big{)}^{\alpha}-
\Big{(}1+\frac{1}{q+1}\Big{)}^{\alpha}+3\Big\}
=\sum_{n = 3}^{\infty}\frac {\hat c_{n}^{(\alpha)}}{ (q+1)^{n-\alpha}},
\end{split}\end{equation}
where $\alpha=3-2s$ as before, and
\begin{equation}\label{cnalpha}
  \hat c_{n}^{(\alpha)} := (-1)^n (2^{n}-(-1)^n-3) \frac{\alpha(\alpha-1)\cdots(\alpha-n+1)}{n!}.
  \end{equation}
Since $\alpha\in (0,3),$  we have
\begin{equation}\label{signhatcn}
\sign(\mathcal G_q)=\sign(  \hat c_{n}^{(\alpha)})=-\sign ((\alpha-1)(\alpha-2))= -\sign ((2s-1)(s-1)).
\end{equation}

With these, we now proceed with the proof by  considering several cases with different ranges of $s$.
\medskip

\underline{(i)$_1$. $s\in (\frac12, 1)$:}\,   In order to prove Statement-(i), we first consider $s\in (\frac 1 2, 1).$
  Note that $A_s < 0$  (cf. \eqref{AStiffMatrix0}), $t_0=2^{4-2s}-8<0$ and $t_p>0$ with $p\geq 1$ (cf. Lemma \ref{elem1}), so we have
\begin{equation}\label{skj121}
S_{kk} = \frac{A_{s}t_{0}}{h^{2s-1}} > 0; \quad S_{kj} = \frac{A_{s}t_{p}}{h^{2s-1}} < 0,\;\;\;{\rm for}\;\; p = |k-j| \geq 1.
\end{equation}
In view of \eqref{monoticityA},   we only need to show that  $d_{1}>0$ and $d_{N/2}>0.$ By   \eqref{GM}-\eqref{sjk0} and \eqref{d1exp}-\eqref{t0t1},
\begin{equation}\label{deltaT2B00}
\begin{split}
& d_{1} = \frac{A_{s}}{h^{2s-1}}\big( t_0 + t_1+\mathcal S_{N-2} \big)= \frac{A_{s}}{h^{2s-1}}\big(\frac12 t_0-{\mathcal G}_{N-2}\big),
\end{split}
\end{equation}
and
\begin{equation}\label{sjk2}
\begin{split}
d_{N/2} & = \frac{A_{s}}{h^{2s-1}}\big(t_{0} +2t_1+2\mathcal S_{N/2-1}\big)= - \frac{2A_{s}}{h^{2s-1}}\mathcal G_{N/2-1}.
\end{split}
\end{equation}
For $s\in (1/2, 1),$ we know from \eqref{signhatcn} that  ${\mathcal G}_{N-2}, \mathcal G_{N/2-1}>0.$
 As $A_s<0,$ we infer from \eqref{deltaT2B00}-\eqref{sjk2}  that $d_1, d_{N/2}>0,$ so the desired property \eqref{SPDD} holds for $s\in (1/2, 1).$

\medskip
\underline{(i)$_2$.  $s\in [s_0, \frac1 2)$}:\, In this case, we have  $A_s > 0$ (cf. \eqref{AStiffMatrix0}), $t_0=2^{4-2s}-8>0$,  and $t_p<0$ for $p\geq 1$ (except for $t_1=0$ with $s=s_0,$ see  Lemma \ref{elem1}), so their signs are opposite to those of the previous case. As a result,  the property \eqref{Skkj}
 still holds, but with the exceptional case:  $S_{k,k\pm 1}=0,$ if $s=s_0.$
Moreover, we also have the same formulas as  \eqref{deltaT2B00}-\eqref{sjk2} for $d_1, d_{N/2}$, i.e.,
\begin{equation*}\label{D1D2}
d_1=\frac{A_{s}}{h^{2s-1}}\Big(\frac12 t_0-{\mathcal G}_{N-2}\Big),\quad d_{N/2}=  - \frac{2A_{s}}{h^{2s-1}}\mathcal G_{N/2-1},
\end{equation*}
but by \eqref{signhatcn},  we have  ${\mathcal G}_{N-2}, \mathcal G_{N/2-1}<0,$ so
$d_1, d_{N/2}>0.$  Consequently, Statement-(i)  holds for $s\in [s_0,1/2).$

\medskip
\underline{(i)$_3.$    $s=\frac 1 2$:}\,  It is evident that by  Lemma \ref{elem2},  we  have  $S_{kk}>0$
and $S_{kj}<0$ for $k\not=j.$  Moreover,  $d_k$ in \eqref{deltaT} becomes
\begin{equation*}\label{deltaT00}
\begin{split}
& d_k:= |S_{kk}|-\sum_{k\not=j=1}^{N-1}|S_{kj}|= r_0 +\sum_{p=1}^{k-1}r_p + \sum_{p=1}^{N-1-k} r_p.
\end{split}
\end{equation*}
Similarly, we have  $d_k=d_{N-k},$ and
\begin{equation*}\label{readyA12}
d_{k+1}=d_{k}+r_k-r_{N-1-k}< d_{k}, \quad  1\le k\le  N/2-1.
\end{equation*}
In this case, it is only necessary to show that $d_{N/2}>0.$
 Comparing \eqref{StiffMatrix01/20} with \eqref{gpa00}, the formulas  in  \eqref{GM}-\eqref{sjk0} are valid  in place of  $f_q=\frac 1 {2\pi} q^2\ln q$ and $t_p=r_p$ in $\mathcal G_q.$ Like \eqref{sjk2}, we have
 \begin{equation*}\label{dN2T12}
\begin{split}
 d_{N/2}&= - {\mathcal G}_{N/2-1} = \frac1{\pi}\bigg\{ \Big(\frac N2+1\Big)^{2}\ln\Big(\frac N2+1\Big) - 3\Big(\frac N2\Big)^{2}\ln \frac N2
\\
&\qquad  + 3\Big(\frac N2-1 \Big)^{2}\ln\Big(\frac N2-1 \Big) - \Big(\frac N2-2 \Big)^2 \ln\Big(\frac N2-2 \Big) \bigg{\}}
\\
& = \frac{N^2}{4\pi}\bigg\{\Big( 1+\frac2N \Big)^2\ln\Big( 1+\frac2N \Big)
+ 3\Big( 1-\frac2N \Big)^2\ln\Big( 1-\frac2N \Big) - \Big( 1-\frac4N \Big)^2\ln\Big( 1-\frac4N \Big)\bigg\},
\end{split}
\end{equation*}
where we subtracted a summation of five terms with $\ln \frac N 2$ in place of all five  $\ln$s (which is zero).  Using the Taylor expansion \eqref{taylorLn},
we can expand $d_{N/2}$ as
\begin{equation}\label{dN2final}
d_{N/2}=\frac 2 \pi \sum_{n=1}^\infty \frac{2^{n+2}+(-1)^{n+1}-3} {n(n+1)(n+2)}\Big(\frac 2 N\Big)^n,
\end{equation}
which is apparently positive. Thus, Statement-(i) is valid  for $s=\frac 1 2.$

\medskip
\underline{(ii).  $s\in (0,s_0)$:}\,  We now turn to the justification for Statement-(ii).
In this case, we have $A_s>0, t_0>0, t_1>0$ and $t_p<0$ for $p\ge 2.$
We first  show that $d_1>0.$ Indeed, by    \eqref{GM} and \eqref{sjk01}-\eqref{t0t1},
\begin{equation}
\begin{split}
d_1 & = \frac{A_{s}}{h^{2s-1}}\bigg(t_0 - t_1 - \sum_{p=2}^{N-2}|t_p|\bigg)
 = \frac{A_{s}}{h^{2s-1}}\big(9\cdot 2^{\alpha}-2\cdot {3}^{\alpha}-18-\mathcal G_{N-2}\big),
\end{split}
\end{equation}
where  by \eqref{signhatcn},  $\mathcal G_{N-2}<0.$ We now show that  $g(\alpha):=9\cdot 2^{\alpha}-2\cdot {3}^{\alpha}-18>0$ for $\alpha=3-2s\in (\alpha_0,3)$ and $\alpha_{0} = 3-2s_{0}\approx 2.5252$.  One verifies readily that $g(\alpha)$ has one extreme point
\begin{equation*}
\alpha_* = \frac{\ln(9\ln2)-\ln(2\ln3)}{\ln3-\ln2} \approx 2.5736,
\end{equation*}
i.e., $g'(\alpha_*) =0,$ where $g(\alpha)$ attains its local maximum  with $g(\alpha_*)\approx 1.7738$. We can further check that  $g(\alpha)>g(3)=0$ for all $\alpha\in (\alpha_0,3).$
Thus, we have $d_1>0.$
We now consider $d_{N/2}.$
By \eqref{sjk01},
\begin{equation}\label{gN}
\begin{split}
d_{N/2} & = \frac{A_{s}}{h^{2s-1}}\bigg(t_{0} - 2t_{1} - 2\sum_{p = 2}^{N/2-1}|t_{p}|\bigg)= \frac{A_{s}}{h^{2s-1}}\big(-4\big(7f_{1}-4f_{2}+f_3\big)-2\mathcal G_{N/2-1}\big)
\\ &
= \frac{2A_{s}}{h^{2s-1}}\big(-2t_{1} -\mathcal G_{N/2-1}\big)> -\frac{4A_{s} t_{1}}{h^{2s-1}},
\end{split}
\end{equation}
where we used the fact $\mathcal G_{N/2-1}<0.$
We next show that there exists $N_0$ such that  the property \eqref{SPDD}  holds only for $ N\le N_0.$
Note from \eqref{gpa22} and \eqref{signhatcn} that the coefficients $\hat c_n^{(\alpha)}<0,$ so we have
$$
\mathcal G_{N/2-1}> \frac{2^{2s}\hat c_3^{(\alpha)}}{N^{2s} }, \quad \hat c_3^{(\alpha)}= (2s-1)(2s-2)(2s-3).
$$
It is clear that
\begin{equation}\label{GNN}
d_{N/2}>\frac{2A_{s}}{h^{2s-1}}\Big(-2t_{1} -  \frac{2^{2s}\hat c_3^{(\alpha)}}{N^{2s} } \Big):=\tilde d_{N/2}.
\end{equation}
We now search for the maximum possible $N$ so that $\tilde d_{N/2}>0,$ i.e.,
\begin{equation}\label{N0cond}
N^{2s}< -\frac{2^{2s-1}\hat c_3^{(\alpha)}}{t_1}   \quad {\rm or}\quad N\le N_0:=
\bigg[2\Big(\frac{(1-2s)(1-s)(3-2s) }{7+3^{3-2s}-2^{5-2s}}\Big)^{\frac 1 {2s}}\bigg],
\end{equation}
 for $s\in (0,s_0).$
  This completes the verification of Statement-(ii).  We  also refer to Figure \ref{t1sN0}(b) and Table \ref{TabN0} for the plot of $N_0=N_0(s)$  and some quantitative study.

\medskip
\underline{(iii).  $s\in (1,\frac 3 2)$:}\,  We now prove the last statement.
In this case, we have  $A_{s} < 0$ (cf. \eqref{AStiffMatrix0}), $t_0=2^{4-2s}-8<0, t_1>0$ and $t_p<0$ for $p\ge 2$ (cf. Lemma  \ref{elem1}), which implies
\begin{equation*}
S_{kk} = \frac{A_{s}t_{0}}{h^{2s-1}} > 0;\quad  S_{k,k\pm 1} = \frac{A_{s}t_{1}}{h^{2s-1}} < 0; \quad  S_{kj} = \frac{A_{s}t_{p}}{h^{2s-1}} > 0,\;\;\;  p=|k-j| \geq 2.
\end{equation*}
In what follows, we shall show that $d_1>0,$ but $d_2<0$ for all $N\ge 3.$
With this,  we can arrive at the conclusion in Statement-(iii) by using \eqref{symdk}-\eqref{monoticityA}.

We first show that $d_1>0.$  By (\ref{deltaT}) and \eqref{sjk01},
\begin{equation}\label{dsA1}
\begin{split}
d_1 & = \frac{A_{s}}{h^{2s-1}}\bigg(\!\!t_0 + t_1 - \sum_{p=2}^{N-2}t_p\bigg)
 = \frac{A_{s}}{h^{2s-1}}\big(2-5\cdot 2^{\alpha} + 2\cdot 3^{\alpha}+\mathcal G_{N-2}\big),\quad \alpha\in(0,1).
\end{split}
\end{equation}
Here by \eqref{signhatcn}, we have $\mathcal G_{N-2}<0$, so it suffices to show  $\tilde{g}(\alpha):=2-5\cdot 2^{\alpha} + 2\cdot 3^{\alpha}<0.$ Note that  $\tilde{g}'(\alpha) = -5(\ln2)2^{\alpha}+2(\ln3)3^{\alpha},$ which has a unique root
\begin{equation*}
\alpha_* = \frac{\ln(5\ln2)-\ln(2\ln3)}{\ln3-\ln2} \approx 1.124.
\end{equation*}
The function $\tilde{g}(\alpha)$ is descending for all $\alpha\in(0,1),$ so  $\tilde{g}(\alpha)<\tilde{g}(0)=-1$. Thus,  $d_1>0$ for all $N$.

We now show that $d_2<0$ for $N\ge 3.$ We obtain from \eqref{deltaT}  that
\begin{equation}
\begin{split}
d_2 & = \frac{A_{s}}{h^{2s-1}}\bigg(\!t_0 + 2t_1 - \sum_{p=2}^{N-3}t_p\bigg) = \frac{A_{s}}{h^{2s-1}}\big(3\big(3f_{1}-3f_{2}+f_3\big)+\mathcal G_{N-3}\big)
\\&= \frac{A_{s}}{h^{2s-1}}\big(-3\mathcal G_{1}+\mathcal G_{N-3}\big)
 < -\frac{3A_{s}}{h^{2s-1}}\mathcal G_{1},
\end{split}
\end{equation}
where we used the fact $\mathcal G_{1}, \mathcal G_{N-3}<0$.
Note from \eqref{gpa22} and \eqref{signhatcn} that the coefficients $\hat c_n^{(\alpha)}<0,$ so we have
$$
\mathcal G_{N-3}<\frac{\hat c_3^{(\alpha)}}{(N-2)^{2s} }, \quad \hat c_3^{(\alpha)}= (2s-1)(2s-2)(2s-3).
$$
If $-3\mathcal G_{1}+\mathcal G_{N-3}>0$, then it is clear that
\begin{equation*}
d_{2} < \tilde d_{2}:=\frac{A_{s}}{h^{2s-1}}\Big(-3\mathcal G_{1} + \frac{\hat c_3^{(\alpha)}}{(N-2)^{2s} } \Big)<0.
\end{equation*}
This yields
\begin{equation}
(N-2)^{2s}> \frac{\hat c_3^{(\alpha)}}{3\mathcal G_{1}}   \quad {\rm or}\quad N\ge N_1:=2+
\bigg[\Big(\frac{(2s-1)(2s-2)(2s-3) }{-9+9\cdot2^{3-2s}-3^{4-2s}}\Big)^{\frac 1 {2s}}\bigg].
\end{equation}
We can verify directly that we can take $N_1=3$ for all $s\in (1,\frac 3 2).$ This completes the proof.
%
%
%
\end{proof}

\begin{rem} {\em
From Lemma \ref{elem1}, we have that  $0<t_{1}(s)<2$ for $0<s<s_{0}$. Then let $r=\frac{1}{\cos(s_{0}\pi)\Gamma(4-2s_{0})}$, we can obtain a strictly diagonally dominant matrix for $(0,s_{0})$ by adding a diagonal matrix, i.e., $S_{h} = \bs S + 4r \bs I$ is strictly diagonally dominant for $s\in (0,1)$.  \qed}
\end{rem}

\begin{table}[!th]
\centering\small
\caption{Values of $N_0(s)$ and its asymptotic estimate  $N_a(s)$ for various $s\in (0, s_0)$} \label{TabN0}
\vspace*{-6pt}
\begin{tabular}{|c|c|c|c|c|c|c|c|c|c|c|c|}
\hline
\cline{1-1}
$s$   & $N_0(s)$ & $N_{a}(s)$ & $s$   & $N_0(s)$ & $N_{a}(s)$ &
$s$   & $N_0(s)$ & $N_{a}(s)$ & $s$   & $N_0(s)$ & $N_{a}(s)$ \\ \hline\hline
0.04  & 2573 & 2572  & 0.09  & 212  & 211  &
0.14  & 159  & 158   & 0.19  & 304  & 303\\\hline
0.05  & 986  & 985   & 0.10  & 184  & 183  &
0.15  & 166  & 165   & 0.20  & 419  & 418\\\hline
0.06  & 532  & 531   & 0.11  & 168  & 167  &
0.16  & 180  & 179   & 0.21  & 669  & 668\\\hline
0.07  & 350  & 349   & 0.12  & 159  & 158  &
0.17  & 204  & 203   & 0.22  & 1416 & 1415\\\hline
0.08  & 261  & 260   & 0.13  & 156  & 155  &
0.18  & 241  & 240   & 0.23  & 6728 & 6727\\\hline
\end{tabular}
\end{table}

At the end of this section, we illustrate the behaviour of the maximum/minimum eigenvalues of $\bs S$ for different fractional order $s\in (0,3/2).$ Observe from
Figure \ref{ProMatx}  that the maximum (resp. minimum) eigenvalue of the stiffness matrix $\bs S$ behaves  like $O(N^{2s-1})$ (resp. $O(N^{-1})$), so its condition number grows  like $O(N^{2s})$.
\begin{figure}[!th]
\centering
\subfigure[Minimum eigenvalue]{
\includegraphics[width=0.33\textwidth]{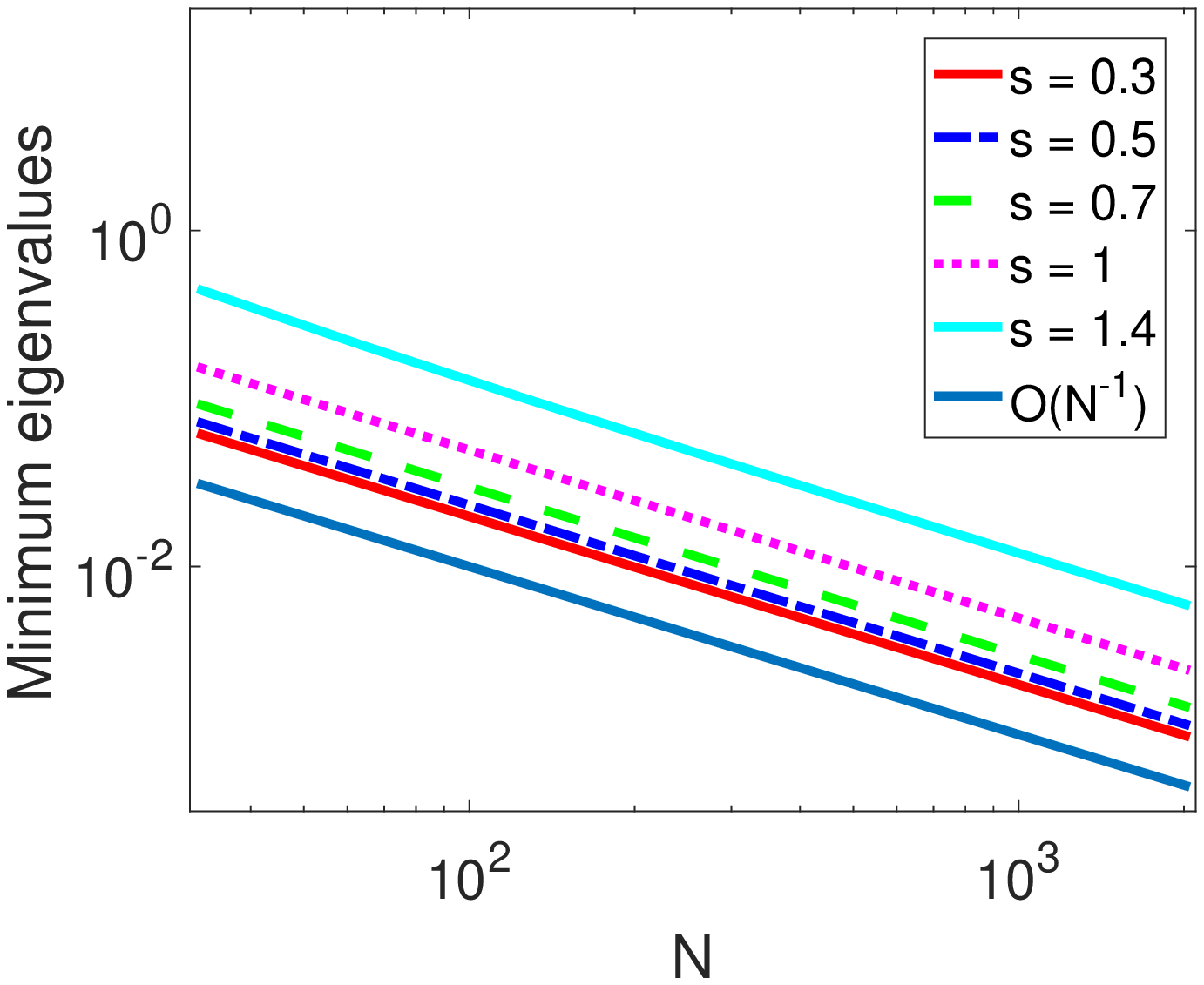}}\hspace*{-8pt}
\subfigure[Maximum eigenvalue]{
\includegraphics[width=0.33\textwidth]{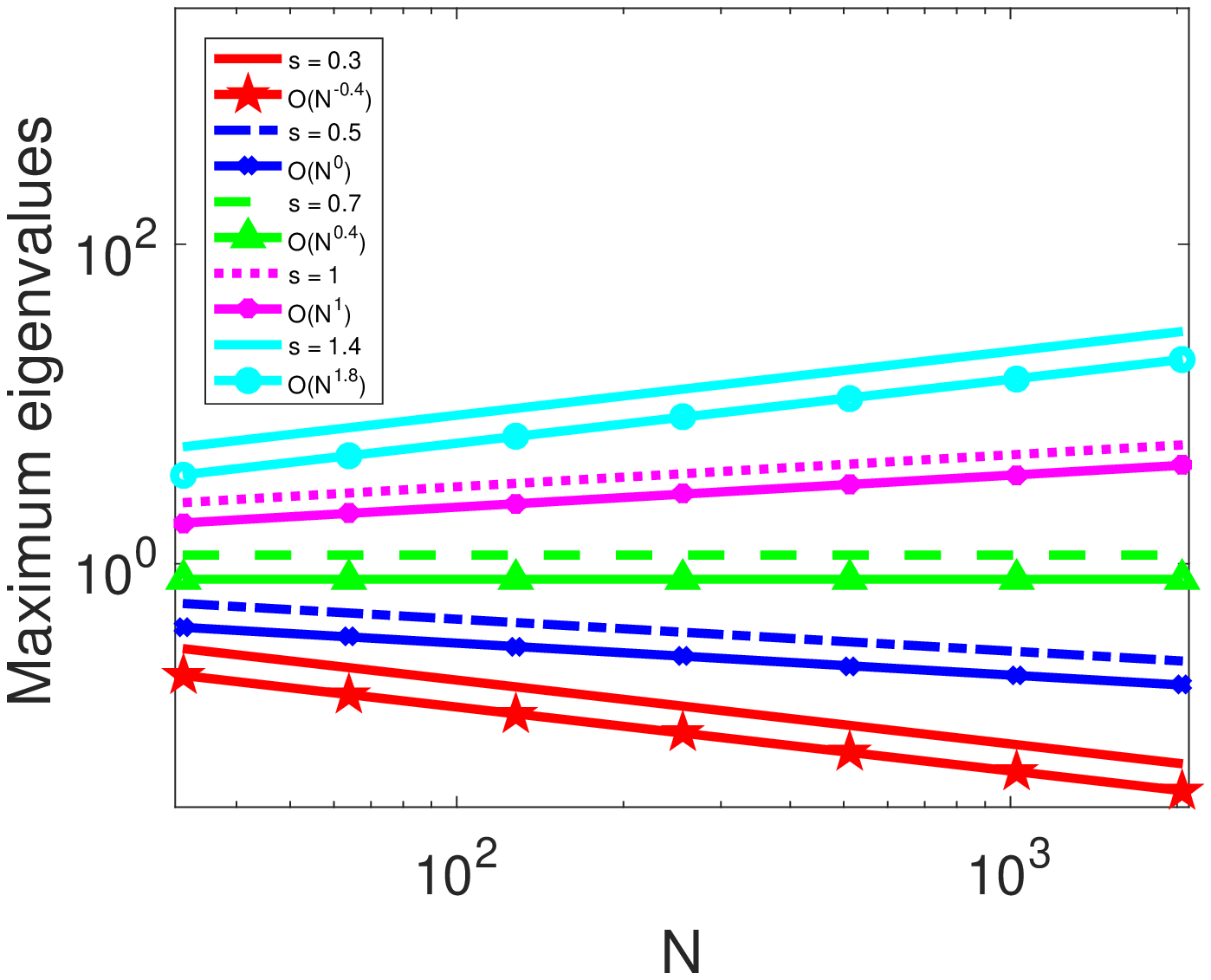}} \hspace*{-8pt}
\subfigure[Condition number]{
\includegraphics[width=0.33\textwidth]{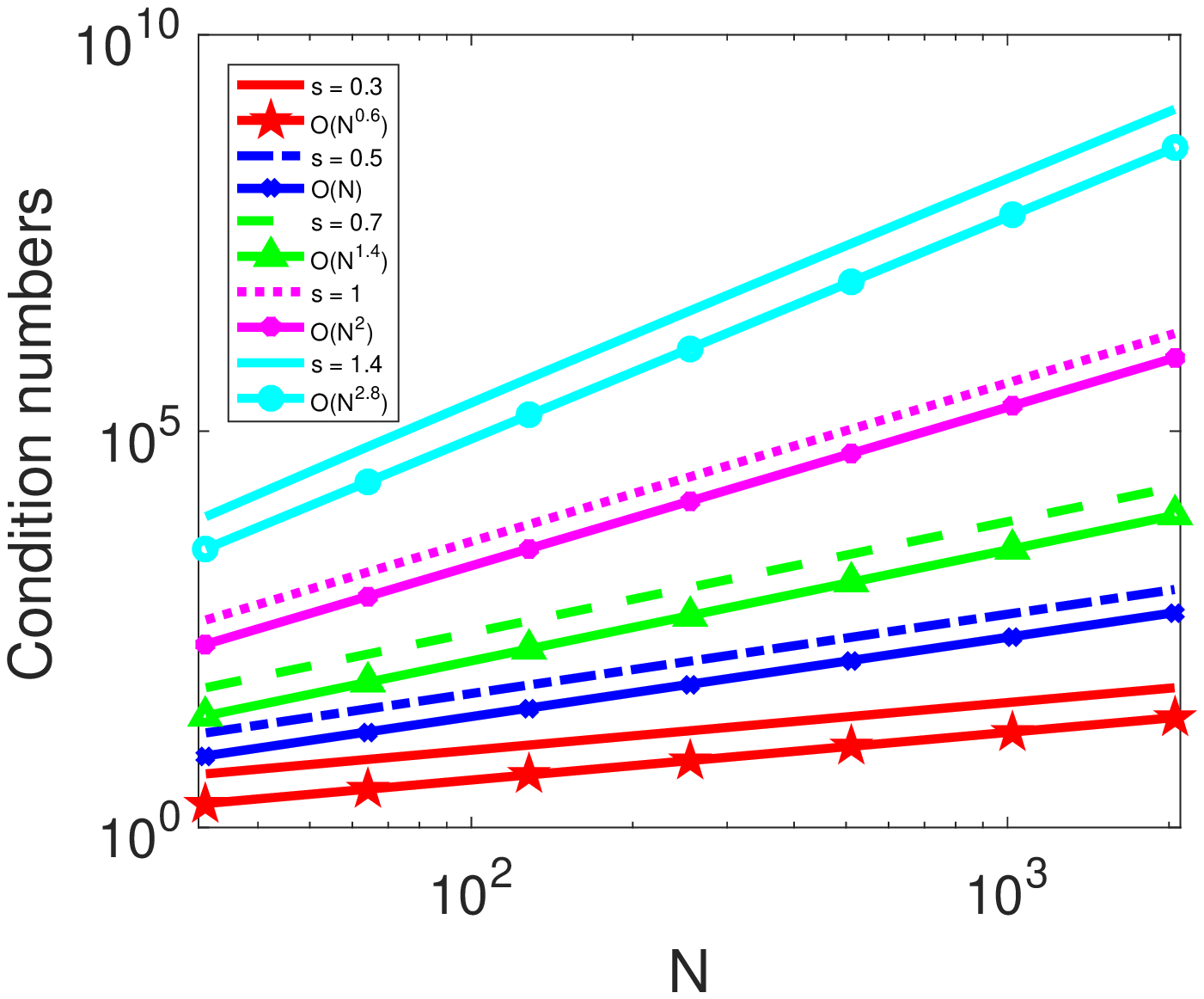}}
\caption{(a) The condition number of the stiffness matrix $\bs S$; (b) The maximum eigenvalue of the stiffness matrix $\bs S$; (c) The minimum eigenvalue of the stiffness matrix $\bs S$.}\label{ProMatx}
\end{figure}

\section{Maximum-principal  preserving schemes for  fractional-in-space Allen-Cahn equation}\label{sect3:AC}
\setcounter{equation}{0} \setcounter{lmm}{0} \setcounter{thm}{0}

In this section, we construct two  maximum principle preserving schemes for the fractional-in-space Allen-Cahn equation \eqref{fAC00}-\eqref{Fu00}, where the notion of  aforementioned  diagonal dominance plays an essential role.

\subsection{Maximum principle and energy dissipation of \eqref{fAC00}-\eqref{Fu00}}
We first show that in the fractional case, the maximum principle and energy dissipation law  hold  at continuous level.
In what follows, with a little abuse of notation,  we understand that $u=0$ for $x\in \Omega^c,$ when
the fractional Laplacian operator is performed on  $u(x).$
\begin{thm}
Let $u$ be the solution of the fractional Allen-Cahn equation \eqref{fAC00}-\eqref{Fu00}  with the fractional order $s\in (0, 3/2).$
If the initial value $u_0\in [0,1]$, then the solution $u\in [0,1]$ for all $t\in [0,T]$.
\end{thm}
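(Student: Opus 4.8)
The plan is to prove the two bounds $u\le 1$ and $u\ge 0$ separately by a barrier (perturbation) argument resting on two ingredients valid at the continuous level. The first is the \emph{pointwise maximum principle} for the integral fractional Laplacian: if a sufficiently smooth function $w$ attains its global maximum over $\mathbb R$ at a point $x_0$, then $w(x_0)-w(y)\ge 0$ for every $y$, so the defining singular integral in \eqref{fracLap-defn} gives $(-\Delta)^s w(x_0)\ge 0$; symmetrically, $(-\Delta)^s w(x_0)\le 0$ at a global minimum. The second is the \emph{sign structure of $f$}: from the factorisation $f(u)=\tfrac12 u(u-1)(2u-1)$ one reads off $f(1)=f(0)=0$ together with $f(u)>0$ for $u>1$ and $f(u)<0$ for $u<0$. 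A useful structural point is that the global spatial extrema are always attained, because $u\equiv 0$ on $\Omega^c$ and $\bar\Omega$ is compact, so the spatial maximum over $\mathbb R$ is either $0$ or is achieved at an interior point of $\Omega$.

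For the upper bound, fix $\delta>0$ and set $w_\delta:=u-(1+\delta)$. At $t=0$ we have $w_\delta(\cdot,0)<0$ on all of $\mathbb R$, since $u_0\le 1$ on $\Omega$ and $u_0\equiv 0$ on $\Omega^c$. Assuming for contradiction that $w_\delta$ becomes nonnegative, let $t^\ast:=\inf\{t>0:\sup_{x}w_\delta(x,t)=0\}$, attained at some $x^\ast$; by continuity in time, $\sup_x w_\delta(x,t)<0$ for $t<t^\ast$. Because $u\equiv 0$ on $\Omega^c$, the point $x^\ast$ lies in $\Omega$, and there $u(\cdot,t^\ast)$ attains its global maximum with value $u(x^\ast,t^\ast)=1+\delta>1$, whence $(-\Delta)^s u(x^\ast,t^\ast)\ge 0$. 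The first-touching property yields $u_t(x^\ast,t^\ast)\ge 0$. Evaluating \eqref{fAC00} at $(x^\ast,t^\ast)$ then gives
$$u_t(x^\ast,t^\ast)=-\epsilon^2(-\Delta)^s u(x^\ast,t^\ast)-f(u(x^\ast,t^\ast))\le -f(1+\delta)<0,$$
a contradiction. Hence $w_\delta<0$ throughout, i.e. $u<1+\delta$, and letting $\delta\to 0^+$ gives $u\le 1$. The lower bound is entirely analogous: put $v_\delta:=u+\delta$, which is $\ge\delta>0$ at $t=0$ and on $\Omega^c$; if $v_\delta$ first vanishes at $(x^\ast,t^\ast)$, then $x^\ast\in\Omega$ is a global minimum of $u(\cdot,t^\ast)$ with $u(x^\ast,t^\ast)=-\delta<0$, so $(-\Delta)^s u(x^\ast,t^\ast)\le 0$ and $u_t(x^\ast,t^\ast)\le 0$, while \eqref{fAC00} yields $u_t(x^\ast,t^\ast)=-\epsilon^2(-\Delta)^s u(x^\ast,t^\ast)-f(-\delta)\ge -f(-\delta)>0$, again a contradiction, so $\delta\to 0^+$ gives $u\ge 0$.

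I expect the main obstacle to be the rigorous justification of the pointwise fractional maximum principle together with the first-touching analysis, rather than the sign bookkeeping. One must ensure enough regularity of $u$ (continuity in $t$, and enough spatial smoothness and decay for the principal-value integral in \eqref{fracLap-defn} to converge and to carry the correct sign at $x^\ast$), which is delicate in the range $s\in(1,3/2)$ where $(-\Delta)^s$ is of order higher than two. A standard remedy is to establish the bounds first for a regularised or Galerkin approximation of \eqref{fAC00}, where the extrema and the time derivative are unambiguous, and then pass to the limit. The strict inequalities $f(1+\delta)>0$ and $f(-\delta)<0$ are precisely what allow the parameter $\delta$ to convert the borderline equality case, in which $u_t(x^\ast,t^\ast)$ and $(-\Delta)^s u(x^\ast,t^\ast)$ could both vanish, into a genuine contradiction.
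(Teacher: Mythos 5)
Your proposal takes a genuinely different route from the paper. The paper argues variationally, in Stampacchia fashion: it tests \eqref{fAC00} with the truncation $v=(u-1)^{+}$ (respectively $w=u^{-}$), shows $(f(u),v)_\Omega\ge 0$, and establishes the key inequality $\big((-\Delta)^s u,(u-1)^{+}\big)_{\Omega}\ge \big\|(-\Delta)^{s/2}(u-1)^{+}\big\|_{L^2(\mathbb R)}^2\ge 0$ by splitting $\mathbb R$ into $\{u>1\}$ and $\{u\le 1\}$ and using the kernel representation \eqref{fracLap-defn}; this yields $\frac{\rm d}{{\rm d}t}\int_\Omega |(u-1)^{+}|^2\,{\rm d}x\le 0$, hence $u\le 1$ from $u_0\le 1$. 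You instead run a classical pointwise barrier argument: a $\delta$-shifted comparison function, a first-touching time $t^\ast$, the sign of $(-\Delta)^s$ at a global maximum, and the strict signs $f(1+\delta)>0$, $f(-\delta)<0$. Your approach is more elementary in flavor and needs no $H^s$ machinery or integration against test functions, and the $\delta$-trick neatly removes the degenerate equality case; the paper's approach buys compatibility with the weak formulation (only testing is needed, no pointwise evaluation of $(-\Delta)^s u$ or first-touching regularity), works directly with $L^2$ quantities, and is the argument that transfers to the discrete FEM setting later in the paper.

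There is, however, one point where your proposal understates the difficulty, and it is more than a regularity technicality. Your first ingredient --- at a global maximum $x_0$, $(-\Delta)^s w(x_0)\ge 0$ --- is simply false in general for $s\in(1,3/2)$. For $s\ge 1$ the principal value in \eqref{fracLap-defn} diverges for generic smooth $w$: the p.v.\ cancellation removes only the odd first-order term of the Taylor expansion, and the second-order term produces an integral of $|y|^{1-2s}$ near $y=0$, which diverges when $s\ge 1$ (correspondingly, $C_s$ changes sign there since $\Gamma(1-s)<0$ for $s\in(1,2)$). The operator must then be written with fourth-order differences, whose sign at a maximum is not determined, and indeed maximum principles for $(-\Delta)^s$ with $s>1$ are known to fail in general. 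Consequently your proposed remedy (regularise or use a Galerkin approximation and pass to the limit) cannot close the argument on $(1,3/2)$, because the ingredient being approximated is itself false. In fairness, the paper's own proof invokes the same pointwise kernel positivity via \eqref{fracLap-defn} in the splitting step, so it is exposed to the same objection on $(1,3/2)$; on the range $s\in(0,1]$, where the kernel representation and its sign are valid, your argument is a correct and legitimately different proof.
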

\begin{proof} We first show that $u\leq 1$. 
For this purpose, we define
\begin{equation}\label{vu}
v:=\max\{u-1,0\}=(u-1)^{+}=
\begin{cases}
u-1, \ \ &u-1\geq 0,\\[2pt]
0, \ \  &u-1<0.
\end{cases}
\end{equation}
Taking the inner product of \eqref{fAC00} with $v$ yields that
\begin{equation}\label{maxprin1}
(u_{t},v)_{\Omega}+\epsilon^2 \big((-\Delta)^{s} u,v\big)_{\Omega}+\big(f(u),v\big)_{\Omega}=0.
\end{equation}
Then we can follow the proofs in \cite{feng2003numerical,li2015numerical} for usual Allen-Cahn equation to carry out the proof.  Firstly, we  can  show that
 \begin{equation}\label{utmultiplyu01}
(u_{t},v)_{\Omega}=\frac{1}{2}\frac{\rm d}{{\rm d} t}\int_{\Omega}|(u-1)^{+}|^2\, {\rm d}x;  \quad \big(f(u),v\big)_{\Omega}\geq 0,
\end{equation}
which follows immediately from
\begin{equation*}
u_{t}\,v=u_{t}\,(u-1)^{+}=(u-1)^+_{t}\,(u-1)^{+}=\frac{1}{2}\frac{\rm d}{{\rm d} t}|(u-1)^{+}|^2\,,
\end{equation*}
and
\begin{equation*}
f(u)\,v=\frac{u\,(2u-1)|(u-1)^{+}|^2}{2}\geq 0.
\end{equation*}

Then we prove the positiveness of the second term in \eqref{maxprin1}. Since for $x\in\Omega^{c}$, $u(x)=0$, i.e., $(u-1)^{+}=0$, we can write equivalently that
\begin{equation}\label{maxprin4}
\big((-\Delta)^s u,(u-1)^{+}\big)_{\Omega}=\big((-\Delta)^s u,(u-1)^{+}\big)_{\mathbb{R}}.
\end{equation}
For this integral in $\mathbb{R}$, we separate it into two parts: $R_{1}$, where $u(x)>1$, and $R_{2}$, where $u(x)\leq1$, i.e.,
\begin{equation}\label{maxprin3}
\big((-\Delta)^s u,(u-1)^{+}\big)_{\mathbb{R}}=\int_{R_{1}}\big((-\Delta)^s u\big)(u-1)^{+}{\rm d}x+\int_{R_{2}}\big((-\Delta)^s u\big)(u-1)^{+}{\rm d}x.
\end{equation}
For $x\in R_{1}$, we know $(u(x)-1)^{+}=u(x)-1$. Then by the definition of fractional Laplacian in \eqref{fracLap-defn} and the fact that $(u(y)-1)^{+}\geq u(y)-1$, $\forall y\in\mathbb{R}$, one verifies readily that
\begin{equation*}
(-\Delta)^s u(x) =(-\Delta)^s \big(u(x)-1\big)\geq(-\Delta)^s \big(u(x)-1\big)^{+}, \;\;\;  x\in R_{1}.
\end{equation*}
Then since $(u(x)-1)^{+}\geq 0$, $\forall x\in \mathbb{R}$, we have for the first integral of \eqref{maxprin3} that
\begin{equation}\label{ubig1}
\int_{R_{1}}\big((-\Delta)^s u\big)(u-1)^{+}{\rm d}x\geq \int_{R_{1}}\big((-\Delta)^s (u-1)^{+}\big)(u-1)^{+}{\rm d}x.
\end{equation}
On the other hand, for $x\in R_{2}$, $(u(x)-1)^{+}=0$, thus for the second integral of \eqref{maxprin3}
\begin{equation}\label{usmall1}
\int_{R_{2}}\big((-\Delta)^s u\big)(u-1)^{+}{\rm d}x=0=\int_{R_{2}}\big((-\Delta)^s (u-1)^{+}\big)(u-1)^{+}{\rm d}x.
\end{equation}
Then finally combining \eqref{maxprin4}-\eqref{usmall1}, we have
\begin{equation}\label{maxprin2}
\big((-\Delta)^s u,(u-1)^{+}\big)_{\Omega}\!\geq\big((-\Delta)^s (u-1)^{+},(u-1)^{+}\big)_{\mathbb{R}}\!=\big((-\Delta)^{s/2} (u-1)^{+},(-\Delta)^{s/2}(u-1)^{+}\big)_{\mathbb{R}}\!\geq 0.
\end{equation}

From \eqref{maxprin1}, \eqref{utmultiplyu01} and \eqref{maxprin2}, we derive
\begin{equation*}
\frac{\rm d}{{\rm d} t}\int_{\Omega}|(u-1)^{+}|^2\, {\rm d}x\le 0\;\; {\rm i.e.,}\;\;
\int_{\Omega}|(u-1)^{+}|^2{\rm d} x\le \int_{\Omega}|(u_0-1)^{+}|^2{\rm d} x=0,
\end{equation*}
as $u_{0}\leq 1.$ This implies $u\leq 1$.


\smallskip
Similarly, we define the test function 
\begin{equation*}
w:=u^{-}=
\begin{cases}
-u, \ \ &u\leq 0,\\
0, \ \ &u>0.
\end{cases}
\end{equation*}
Following the same lines,  we can prove $u\geq 0$. Since $u=0$ for $x\in \Omega^{c}$, we have $0\leq u\leq 1$. This completes the proof.
\end{proof}

The energy dissipation law is easy to show. Define the energy as in \cite{duo2019fractional,tang2019energy}
\begin{equation}\label{EnergyEu}
E(u) = \int_{\Omega} \Big\{\frac{\epsilon^2}{2}\big((-\Delta)^{s/2} u\big)^2 + F(u)\Big\} {\rm d}x.
\end{equation}
Taking the inner product of \eqref{fAC00} with $u_{t}$ on $\Omega$, we have
\begin{equation*}
\int_{\Omega}|u_{t}|^2{\rm d}x+\epsilon^2 \int_{\Omega}\big\{(-\Delta)^s u\,u_{t}+f(u)u_{t}\big\}{\rm d}x=0.
\end{equation*}
As with the standard Allen-Cahn equation in bounded domain, we have
\begin{equation*}
\frac{{\rm d}}{{\rm d}t} E(u)=-\|u_{t}\|^2\leq 0.
\end{equation*}
\begin{rem}\label{multiDcase} {\em The above two properties also hold in multiple dimensions. \qed}
\end{rem}

In the next two subsections, we propose two full-discrete schemes, which can preserve these two properties,  and are of the first and second-order accuracy in time respectively.
\subsection{ Standard semi-implicit  time discretization with modified FEM in space}\label{fies}
We first define the  piecewise linear interpolation $I_{h}: C(\bar{\Omega})\mapsto \V_{\!h}$ as
\begin{equation}\label{Ih}
I_{h}v(x):=\sum_{j=1}^{N-1}v(x_{j})\phi_{j}(x)=\sum_{j=1}^{N-1}v_{j}\phi_{j}(x).
\end{equation}
Then the full-discrete  scheme is to find $u_{h}^{n+1}\in \V_{\!h}$ such that
\begin{equation}\label{modified}
\frac{1}{\tau}\big(I_{h}\big((u_{h}^{n+1}-u_{h}^{n})v_h\big),1\big)_\Omega+\epsilon^2 \big((-\Delta)^s u_{h}^{n+1},v_{h}\big)_\Omega+\big(I_{h}\big(f(u_{h}^n)v_{h}\big),1\big)_\Omega=0,\;\; \forall v_{h}\in \V_{\!h}.
\end{equation}
Its matrix form reads
\begin{equation}\label{modified1}
\frac{U^{n+1}-U^{n}}{\tau}+\frac{\epsilon^2}{h} {\bs S}\, U^{n+1}+f(U^n)=0,
\end{equation}
where $\bs S$ is the stiffness matrix in \eqref{StiffMatrix}, and
\begin{equation*}
U^{n}=\big(u_h^{n}(x_1),\cdots, u_h^{n}(x_{N-1})\big)^T,\quad
f(U^{n}) = \big(f(u_{h}^{n}(x_1)),\cdots, f(u_{h}^{n}(x_{N-1}))\big)^{T}.
\end{equation*}
%
\begin{rem}{\em Different from the usual finite element discretisation, we adopted the modification as in the recent work by Xu
et al. \cite{xu2019stability} in the study of time-discretisation of the standard  Allen-Cahn equation. \qed
}
\end{rem}

\subsubsection{Maximum principle of the scheme \eqref{modified1}} To prove the maximum principle, we first show the following important properties   of  $\bs S,$ drawn from Theorem
\ref{DM01}.

\begin{lmm}\label{samesign}
For $s\in(s_{0},1]$ and a given nonzero vector $v=(v_1, \cdots, v_{N-1})^T$, the stiffness matrix $\bs S$  in Theorem \ref{th21} has the following properties:
\begin{itemize}
\item[(i)] Suppose that  $v$ has at least one negative component, and let $v_p$ be the component with the biggest absolute value among the negative components. Then we have  $\sum_{j=1}^{N-1}{S}_{pj}v_{j}<0$.
\medskip
\item[(ii)]Suppose that  $v$ has at least one positive component, and let $v_q$ be the component with the largest value. Then we have $\sum_{j=1}^{N-1}{S}_{qj}v_{j}> 0$.
\end{itemize}
\end{lmm}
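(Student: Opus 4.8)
The plan is to prove both statements by exploiting the sign pattern and the strict diagonal dominance of $\bs S$ recorded in Theorem \ref{DM01}(i). The crucial preliminary observation is that for $s\in(s_0,1]$ every off-diagonal entry satisfies $S_{qj}\le 0$ (strictly negative for $s\in(s_0,1)$, and with only the two nearest neighbours nonzero for $s=1$), while each diagonal entry $S_{qq}>0$. Consequently each row sum
\[
\sigma_q:=\sum_{j=1}^{N-1}S_{qj}=S_{qq}-\sum_{j\ne q}|S_{qj}|
\]
is nonnegative, and in fact strictly positive for $s\in(s_0,1)$ by the strict diagonal dominance \eqref{SPDD}; for $s=1$ it equals $0$ on interior rows and is $>0$ on the two boundary rows. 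Note that the degenerate case $s=s_0$, where the nearest-neighbour entries vanish, is excluded by the open left endpoint, so the strict off-diagonal negativity is available throughout $(s_0,1)$.

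For statement (ii) I would first record that $v_q=\max_j v_j>0$ since $v$ has a positive component. The key algebraic step is the regrouping
\[
\sum_{j=1}^{N-1}S_{qj}v_j=\sigma_q\,v_q+\sum_{j\ne q}S_{qj}\,(v_j-v_q).
\]
Because $v_q$ is the maximum, $v_j-v_q\le 0$, and since $S_{qj}\le 0$ each summand $S_{qj}(v_j-v_q)\ge 0$; combined with $\sigma_q v_q\ge 0$ this already yields $\sum_j S_{qj}v_j\ge 0$. To upgrade to strict positivity for $s\in(s_0,1)$ I would invoke $\sigma_q>0$ and $v_q>0$, so $\sigma_q v_q>0$ and the whole sum is positive. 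Statement (i) follows by the mirror-image computation: the hypothesis forces $v_p=\min_j v_j<0$ to be the global minimum, so $v_j-v_p\ge 0$ and $S_{pj}\le 0$ give $S_{pj}(v_j-v_p)\le 0$, while $\sigma_p v_p\le 0$; hence $\sum_j S_{pj}v_j=\sigma_p v_p+\sum_{j\ne p}S_{pj}(v_j-v_p)<0$, using $\sigma_p>0$ for the strictness.

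The step I expect to be the main obstacle is the endpoint $s=1$, where the strict diagonal dominance degenerates into equality on interior rows ($\sigma_q=0$), so the argument above yields only $\ge 0$ rather than $>0$. Here I would choose the maximizing index more carefully. Let $J=\{j:v_j=v_q\}$ be the set of indices attaining the maximum. If $J=\{1,\dots,N-1\}$, pick a boundary index $q\in\{1,N-1\}$, where $\sigma_q>0$ forces $\sigma_q v_q>0$. Otherwise, since the indices $1,\dots,N-1$ form a path under nearest-neighbour adjacency, some maximizer $q\in J$ has a neighbour $q\pm1\notin J$ with $v_{q\pm1}<v_q$; as $S_{q,q\pm1}=-1/h<0$ for $s=1$, the term $S_{q,q\pm1}(v_{q\pm1}-v_q)>0$ makes the sum strictly positive. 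An identical level-set argument applied to the minimum handles statement (i) at $s=1$. This refinement is the only place where one cannot simply quote \eqref{SPDD}, and it is exactly what makes the inclusion of the endpoint $s=1$ in the range $(s_0,1]$ legitimate.
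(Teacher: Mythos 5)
Your proof is correct, and it differs from the paper's in two respects. For the core case $s\in(s_0,1)$ the paper argues via a different decomposition: to get statement (i) it multiplies the row action by $v_p$ and shows $v_p\sum_j S_{pj}v_j>0$, by discarding the terms with $v_j\ge 0$ (which are nonnegative by the sign pattern), bounding $|v_j|\le |v_p|$ over the negative components, and then invoking the strict diagonal dominance of Theorem \ref{DM01}(i); statement (ii) is declared symmetric. Your decomposition $\sum_j S_{qj}v_j=\sigma_q v_q+\sum_{j\ne q}S_{qj}(v_j-v_q)$ is the classical discrete-maximum-principle (M-matrix) argument: it consumes the same two structural inputs---off-diagonal nonpositivity and positive row sums---but arranges them so that every term has a definite sign, which is cleaner and, more importantly, isolates exactly where strictness of the row-sum inequality is needed.

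That second point matters because your treatment of $s=1$ repairs a genuine gap in the paper. The paper's proof ends with ``It is evident that the above properties hold for $s=1$,'' but they do not: at $s=1$ the matrix is $\frac{1}{h}\,\mathrm{tridiag}(-1,2,-1)$, all entries with $|k-j|\ge 2$ vanish, and the row sums are zero on every interior row, so both the strict negativity of the off-diagonal entries and the strict dominance $S_{ii}+\sum_{j\ne i}S_{ij}>0$ fail there. Indeed, the lemma as literally stated is false at $s=1$: for $v=(1,\dots,1)^T$ any interior maximizer $q$ gives $\sum_j S_{qj}v_j=0$, not $>0$. Your connectivity/level-set refinement---pick a maximizer adjacent to a strictly smaller component, or a boundary index when the vector is constant---is precisely what makes the conclusion true at $s=1$ for a suitably chosen extremal index, and such a choice (or, alternatively, the weak inequality $\ge 0$, which your regrouping yields for every maximizer) is all that the contradiction arguments in Theorem \ref{FDMEh} and its Crank--Nicolson analogue actually require. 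So your proposal is not only valid but strictly more careful than the paper's own proof of Lemma \ref{samesign}.
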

\begin{proof} From Theorem \ref{DM01}, we know that for $s\in(s_{0},1),$ the matrix $\bs S$ is diagonally dominant such that
for $1\le i\le N-1,$
\begin{equation*}
{S}_{ii} > 0; \;\;\;  {S}_{ij}<0,\;\;\; \text{if}\;i\neq j;  \;\;\;  {\rm and} \;\;\;  {S}_{ii} + \sum_{j\neq i}{S}_{ij} >0.
\end{equation*}
We first prove  Statement (i). Since $v_{p}<0$, it is equivalent to proving that $\sum_{j=1}^{N-1}{S}_{pj}v_j$ and $v_{p}$ have the same sign.  It is evident that
\begin{equation*}
v_{p}\Big(\sum_{j=1}^{N}{S}_{pj}v_{j}\Big)\!=v_{p}\Big(\!{S}_{pp}v_{p}+\sum_{j\neq p}{S}_{pj}v_{j}\Big)\!={S}_{pp}v_{p}^{2}+\sum_{j\neq p}{S}_{pj}v_{p} v_{j}.
\end{equation*}
Let   $J=\{j : v_{j}<0,\; 1\le j\le N-1\}.$ Since ${S}_{ii} > 0,\; {S}_{ij}<0\;\text{for}\;i\neq j,$ we find
\begin{equation*}
\begin{split}
v_{p}\Big(\sum_{j=1}^{N-1}{S}_{pj}v_{j}\Big) &\geq {S}_{pp}v_{p}^{2}+\!\sum_{j\in J,\,j\neq p}\!{S}_{pj}v_{p} v_{j}= {S}_{pp}v_{p}^{2}+\!\sum_{j\in J,\,j\neq p}\!{S}_{pj}|v_{j}|\,|v_{p}|\\
&= \Big({S}_{pp}+ \sum_{j\in J,\,j\neq p}\!\!{S}_{pj}\Big)|v_{p}|^2> 0,
\end{split}
\end{equation*}
where in the last step, we used the fact:  $|v_{p}|=\max\{|v_{j}|,\,j\in J\}$.

The second statement can be proved in the same fashion. It is evident that the above properties hold for $s=1.$
\end{proof}

We are now in a position to show that the scheme \eqref{modified1} preserves the maximum principle.
\begin{thm}\label{FDMEh} For $s\in (s_0, 1],$
if the initial value satisfies $0\leq u_{0}(x) \leq 1$, then the full-discrete scheme \eqref{modified1} preserves the maximum principle in the sense that $0\leq U^{n}_j \leq 1$ for $1\le j\le N-1,$ if the  time stepping size $0< \tau \leq 2$.
\end{thm}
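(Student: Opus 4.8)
The plan is to argue by induction on the time level $n$, reducing the vector statement to two independent ingredients: a scalar monotonicity property of the explicit update map, and the sign information about $\bs S$ already packaged in Lemma \ref{samesign}. First I would rewrite \eqref{modified1} as $(\bs I+\mu\bs S)U^{n+1}=U^n-\tau f(U^n)$ with $\mu:=\tau\epsilon^2/h>0$. Since $\bs S$ is symmetric positive definite (being the Gram matrix in \eqref{freqform}), or alternatively strictly diagonally dominant with positive diagonal by Theorem \ref{DM01}(i), the matrix $\bs I+\mu\bs S$ is invertible, so $U^{n+1}$ is well defined. The base case $U^0_j=u_0(x_j)\in[0,1]$ is the hypothesis, and I would assume $0\le U^n_j\le1$ for all $j$.

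The scalar ingredient is to show that $g(x):=x-\tau f(x)$ maps $[0,1]$ into $[0,1]$ for $0<\tau\le2$. With $f(x)=(2x^3-3x^2+x)/2$ from \eqref{Fu00}, one has $g'(x)=1-\tau f'(x)$ where $f'(x)=(6x^2-6x+1)/2\in[-\tfrac14,\tfrac12]$ on $[0,1]$; hence $g'(x)\ge 1-\tau/2\ge0$ precisely when $\tau\le2$. Thus $g$ is nondecreasing on $[0,1]$, and since $g(0)=0$ and $g(1)=1$, it follows that $g([0,1])\subseteq[0,1]$. This is exactly where the step-size restriction $\tau\le2$ enters. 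Writing $b:=U^n-\tau f(U^n)$, the induction hypothesis then gives $b_j=g(U^n_j)\in[0,1]$ for every $j$.

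It remains to transfer the bounds on $b$ to $U^{n+1}$ through $(\bs I+\mu\bs S)U^{n+1}=b$, which I would do by contradiction at the extremal indices. For the lower bound, if some component of $U^{n+1}$ were negative, let $p$ realise the most negative value (equivalently the largest modulus among the negative components); Lemma \ref{samesign}(i) gives $(\bs S U^{n+1})_p=\sum_j S_{pj}U^{n+1}_j<0$, so the $p$-th equation yields $U^{n+1}_p=b_p-\mu(\bs S U^{n+1})_p>b_p\ge0$, a contradiction. For the upper bound, if some component exceeded $1$, let $q$ realise the maximum; since then $U^{n+1}_q>0$, Lemma \ref{samesign}(ii) gives $(\bs S U^{n+1})_q>0$, whence $U^{n+1}_q=b_q-\mu(\bs S U^{n+1})_q<b_q\le1$, again a contradiction. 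Therefore $0\le U^{n+1}_j\le1$, closing the induction.

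The main obstacle is not the linear-algebra step, since Lemma \ref{samesign} already tailors the sign structure of $\bs S$ to the extremal index and supplies the strict inequalities that drive the contradictions. Rather, the delicate point is pinning down the scalar map $g$ and verifying that $\tau\le2$ is exactly the threshold keeping it monotone (hence invariance-preserving) on $[0,1]$. I would also take care that Lemma \ref{samesign} requires $v_p$ (resp.\ $v_q$) to be the extremal component in the precise sense stated, so the contradiction hypotheses must be arranged by selecting the minimising and maximising indices of $U^{n+1}$, which is what legitimises invoking the lemma.
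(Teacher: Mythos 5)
Your proposal is correct and follows essentially the same route as the paper's own proof: induction in time, monotonicity of the scalar map $g(x)=x-\tau f(x)$ on $[0,1]$ under $0<\tau\le 2$ (your bound $f'\in[-\tfrac14,\tfrac12]$ is the same computation as the paper's $g'(x)=-3\tau(x-\tfrac12)^2+1+\tfrac{\tau}{4}\ge 1-\tfrac{\tau}{2}$), and a contradiction at the extremal components of $U^{n+1}$ via Lemma \ref{samesign}. The only additions---noting invertibility of $\bs I+\mu\bs S$ for well-posedness and solving the $p$-th equation for $U^{n+1}_p$ rather than bounding the whole left-hand side---are harmless cosmetic refinements of the identical argument.
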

\begin{proof} We rewrite the scheme  \eqref{modified1} as
\begin{equation}\label{differencelikescheme}
U^{n+1}+\frac{\tau\epsilon^2}{h}{\bs S}\,U^{n+1}=U^{n}-\tau f(U^n).
\end{equation}
We carry out the proof by mathematical induction. It's obvious that $0\leq U^{0}_j\leq 1$. Assuming  that $0\leq U^{n}_j \leq 1$, we next show that  $0\leq U^{n+1}_j \leq 1$. Observe that the component of $U^{n}-\tau f(U^{n})$ is associated with
\begin{equation*}
g(x)=x-\tau f(x).
\end{equation*}
One verifies that if $0<\tau\leq 2$, then for $x\in [0,1],$
\begin{equation*}
g'(x)=-3\tau\Big(x-\frac{1}{2}\Big)^{2}+1+\frac{\tau}{4}\geq 1-\frac{\tau}{2}\geq 0.
\end{equation*}
Thus
\begin{equation*}
\min_{x\in[0,1]}g_{1}(x)=g_{1}(0)=0,\quad\max_{x\in[0,1]}g_{1}(x)=g_{1}(1)=1.
\end{equation*}
Since $0\leq U^{n}_j \leq 1$, we have
\begin{equation}\label{forg2}
0\leq U^{n}_j-\tau f(U^{n}_j)\leq 1,\;\;\; \text{if}\;\; 0<\tau\le 2.
\end{equation}
 We proceed with the proof by contradiction.  
 If there exists a negative component in $U^{n+1}$, we choose the one with the biggest absolute value, say $U_{p}^{n+1}$. Then by Lemma \ref{samesign}, we have $\sum_{j=1}^{N-1}S_{pj}U_{j}^{n+1}< 0,$ so
\begin{equation*}
U_{p}^{n+1}+\frac{\tau\epsilon^2}{h}\sum_{j=1}^{N-1}S_{pj}U_{j}^{n+1}< 0,
\end{equation*}
which contradicts to
 the $p$th equation of the system \eqref{differencelikescheme},  in view of \eqref{forg2}.  Thus all components  $U^{n+1}_j\geq 0$.

 On the other hand, if there exists a component in $U^{n+1}$ that is bigger than $1$, we choose the one with the biggest value, say $U_{q}^{n+1}$. Then by Lemma \ref{samesign}, we have $\sum_{j=1}^{N-1}S_{qj}\,U_{j}^{n+1}>0,$  which implies
\begin{equation*}
U_{q}^{n+1}+\frac{\tau\epsilon^2}{h}\sum_{j=1}^{N-1}S_{qj}U_{j}^{n+1}> 1.
\end{equation*}
Once again, by virtue of \eqref{forg2}, we find it contradicts to the $q$th equation of the system \eqref{differencelikescheme}.
Thus we have $U^{n+1}_j\leq 1$.
This completes the proof.
\end{proof}
\subsubsection{Energy dissipation of the scheme \eqref{modified1}}
Corresponding to \eqref{EnergyEu}, we define the discrete energy  as 
\begin{equation}\label{Ehu}
E_{h}(U) =  \frac{\epsilon^2}{2}U^T{\bs S}U + h\sum_{j=1}^{N-1}F(U_j),
\end{equation}
where $\bs S$ is defined in Theorem \ref{th21} and $U$ is a column vector of length $N-1$.  Then we have the following energy dissipation property.
\begin{thm}\label{energydis} Let $s\in (s_0,1].$
If the initial value satisfies $0\leq u_{0}(x)\leq 1$, then the numerical solutions of the scheme \eqref{modified1} satisfies the discrete energy dissipation law:
\begin{equation}\label{EhU}
E_{h}(U^{n+1})\leq E_{h}(U^{n}),
\end{equation}
if $0< \tau \leq 2$.
\end{thm}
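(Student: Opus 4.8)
The plan is to estimate the energy increment $E_h(U^{n+1})-E_h(U^n)$ directly, by testing the scheme against the update $W:=U^{n+1}-U^n$, in the spirit of the standard argument for semi-implicit Allen--Cahn discretisations. First I would split the increment into its quadratic (fractional-diffusion) part and its potential part,
\begin{equation*}
E_h(U^{n+1})-E_h(U^n)=\frac{\epsilon^2}{2}\big((U^{n+1})^T{\bs S}\,U^{n+1}-(U^n)^T{\bs S}\,U^n\big)+h\sum_{j=1}^{N-1}\big(F(U^{n+1}_j)-F(U^n_j)\big),
\end{equation*}
and handle the two parts separately, writing $|W|^2:=W^TW=\sum_j W_j^2$.

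For the quadratic part I would exploit that $\bs S$ is symmetric via the polarisation identity $a^T{\bs S}a-b^T{\bs S}b=2a^T{\bs S}(a-b)-(a-b)^T{\bs S}(a-b)$ with $a=U^{n+1}$, $b=U^n$, which gives $\frac{\epsilon^2}{2}(\cdots)=\epsilon^2(U^{n+1})^T{\bs S}W-\frac{\epsilon^2}{2}W^T{\bs S}W$. Rewriting the matrix form \eqref{modified1} as $\epsilon^2{\bs S}\,U^{n+1}=-\frac{h}{\tau}W-hf(U^n)$ and using symmetry once more turns the first term into $W^T(\epsilon^2{\bs S}\,U^{n+1})=-\frac{h}{\tau}|W|^2-h\,f(U^n)^TW$. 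For the potential part, a second-order Taylor expansion of the quartic $F$ yields, since $f=F'$, the identity $F(U^{n+1}_j)-F(U^n_j)=f(U^n_j)W_j+\frac12 F''(\xi_j)W_j^2$ for some $\xi_j$ lying between $U^n_j$ and $U^{n+1}_j$.

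Adding the two parts, the cross terms $\pm h\,f(U^n)^TW$ cancel exactly, leaving
\begin{equation*}
E_h(U^{n+1})-E_h(U^n)=-\frac{h}{\tau}|W|^2-\frac{\epsilon^2}{2}W^T{\bs S}W+\frac{h}{2}\sum_{j=1}^{N-1}F''(\xi_j)W_j^2.
\end{equation*}
Here $\bs S$ is positive semidefinite, since by \eqref{freqform} one has $U^T{\bs S}U=\int_{\mathbb R}|\xi|^{2s}\big|\mathscr F[\sum_j U_j\phi_j](\xi)\big|^2\,{\rm d}\xi\ge 0$, so the middle term is nonpositive and I only need to absorb the indefinite last term.

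The crux of the argument is to control $\sum_j F''(\xi_j)W_j^2$. Computing $F''(u)=3\big(u-\tfrac12\big)^2-\tfrac14$, I would invoke the discrete maximum principle of Theorem \ref{FDMEh}, which under $0<\tau\le 2$ guarantees $U^n_j,U^{n+1}_j\in[0,1]$ and hence $\xi_j\in[0,1]$, on which $F''(\xi_j)\le F''(0)=\frac12$. This gives $\frac{h}{2}\sum_j F''(\xi_j)W_j^2\le\frac{h}{4}|W|^2$, whence
\begin{equation*}
E_h(U^{n+1})-E_h(U^n)\le h\Big(\frac14-\frac1\tau\Big)|W|^2-\frac{\epsilon^2}{2}W^T{\bs S}W.
\end{equation*}
Because $0<\tau\le 2$ forces $\tfrac14-\tfrac1\tau\le-\tfrac14<0$, both terms on the right are nonpositive and \eqref{EhU} follows. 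I expect the main obstacle to be exactly this uniform bound on $F''(\xi_j)$: it relies on knowing the iterates remain enclosed in $[0,1]$, which is precisely what the maximum-principle theorem together with the step-size restriction $\tau\le 2$ supplies; without that enclosure the nonconvexity of the quartic potential could not be absorbed by the $-\frac{h}{\tau}|W|^2$ term.
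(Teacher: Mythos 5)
Your proof is correct and is essentially the paper's intended argument: the paper omits the proof entirely, deferring to \cite[Theorem 2.2]{tang2016implicit}, and that reference's argument is precisely your standard semi-implicit energy estimate --- polarisation of the quadratic term, substitution of the scheme to cancel the cross terms $\pm h\,f(U^n)^TW$, a second-order Taylor remainder for $F$, positive semi-definiteness of $\bs S$ (which also follows from its strict diagonal dominance with positive diagonal for $s\in(s_0,1]$), and the bound $F''\le \tfrac12$ on $[0,1]$ furnished by the discrete maximum principle of Theorem \ref{FDMEh} under $0<\tau\le 2$. In effect you have supplied, correctly adapted to the modified potential \eqref{Fu00}, exactly the details the paper leaves out.
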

The proof is very similar to the proof of   \cite[Theorem 2.2]{tang2016implicit}, so we omit it.
\begin{rem}{\em
It is proved above that when $\tau\in (0,2]$, the maximum principle and energy dissipation are maintained in \eqref{modified1}. In order to make it unconditionally stable, we can add an extra perturbation term which is compatible with the truncation error, see \cite{tang2016implicit} for more details. \qed}
\end{rem}

\subsection{Modified FEM and Crank-Nicolson scheme}\label{cns}
Now we present the modified FEM and Crank-Nicolson scheme:
\begin{equation}\label{modifiedCN}
\begin{split}
\frac{1}{\tau}\big(I_{h}\big((u_{h}^{n+1}-u_{h}^{n})v_h\big),1\big)_{\Omega}+&\epsilon^2 \Big(\frac{(-\Delta)^s u_{h}^{n+1}+(-\Delta)^s u_{h}^{n}}{2},v_{h}\Big)_\Omega
\\&+\Big(I_{h}\Big(\frac{f(u_{h}^{n+1})+f(u_{h}^n)}{2}v_{h}\Big),1\Big)_\Omega=0,\quad\,\forall v_{h}\in \V_{\!h},
\end{split}
\end{equation}
with the matrix form
\begin{equation}\label{modified1CN}
\frac{ U^{n+1}-U^{n}}{\tau}+\frac{\epsilon^2}{2h} {\bs S} \big(U^{n+1}+U^{n}\big)+\frac{1}{2}\big(f(U^{n+1})+f(U^n)\big)=0.
\end{equation}

\subsubsection{Maximum principle of the scheme \eqref{modifiedCN}}
\begin{theorem}
If the initial value satisfies $0\leq u_{0}(x)\leq 1$, then for $s\in(s_{0},1]$, the scheme \eqref{modifiedCN} preserves the maximum principle in the sense that $0\leq U_{j}^{n}\leq 1$ for all $n\geq 1$ and $j=1,2,\cdots,N$, provided that the time stepsize satisfies
\begin{equation}\label{stepCN}
0<\tau\leq \min\Big\{2,\frac{h^{2s}}{2\epsilon^{2}}\Big\}.
\end{equation}
\end{theorem}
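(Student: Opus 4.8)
The plan is to mirror the induction-plus-contradiction argument used for the semi-implicit scheme in Theorem \ref{FDMEh}, the new features being that both the diffusion and the nonlinearity are now treated partly explicitly. First I would multiply \eqref{modified1CN} by $\tau$ and regroup it in the split form
\begin{equation*}
\Phi(U^{n+1})=\Psi(U^n),\qquad \Phi(V):=V+\frac{\tau\epsilon^2}{2h}\bs S V+\frac{\tau}{2}f(V),\qquad \Psi(W):=W-\frac{\tau\epsilon^2}{2h}\bs S W-\frac{\tau}{2}f(W),
\end{equation*}
so that $\Phi$ carries the backward (implicit) half-step and $\Psi$ the forward (explicit) half-step. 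Arguing by induction, I assume $0\le U^n_j\le 1$ for all $j$ and aim to prove the same for $U^{n+1}$. The two pillars are: (A) the explicit operator $\Psi$ maps the box $[0,1]^{N-1}$ into itself under the stated restriction on $\tau$; and (B) a discrete maximum-principle argument based on Lemma \ref{samesign} that transfers the bounds from $\Psi(U^n)$ back to $U^{n+1}$ through $\Phi$.

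Granting (A), the contradiction step runs exactly as in Theorem \ref{FDMEh}. If $U^{n+1}$ had a negative entry, pick the most negative one $U^{n+1}_p$; then $U^{n+1}_p<0$, $f(U^{n+1}_p)<0$ (since $f(u)<0$ for $u<0$ by \eqref{Fu00}), and $\sum_j S_{pj}U^{n+1}_j<0$ by Lemma \ref{samesign}(i), whence $\Phi(U^{n+1})_p<0$, contradicting $\Phi(U^{n+1})_p=\Psi(U^n)_p\ge 0$. Symmetrically, if some entry exceeded $1$, pick the largest $U^{n+1}_q$; then $U^{n+1}_q>1$, $f(U^{n+1}_q)>0$, and $\sum_j S_{qj}U^{n+1}_j>0$ by Lemma \ref{samesign}(ii), so $\Phi(U^{n+1})_q>1$, contradicting $\Phi(U^{n+1})_q=\Psi(U^n)_q\le 1$. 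This yields $0\le U^{n+1}_j\le 1$.

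The crux is therefore (A). For the lower bound, using that the off-diagonal entries of $\bs S$ are nonpositive for $s\in(s_0,1]$ (Theorem \ref{DM01}(i)) together with $W_j\ge 0$, I would estimate $(\bs S W)_p\le S_{pp}W_p$ and hence
\begin{equation*}
\Psi(W)_p\ge \Big(1-\frac{\tau\epsilon^2}{2h}S_{pp}\Big)W_p-\frac{\tau}{2}f(W_p)=:h(W_p),\qquad h(x)=x\Big[(1-\beta)-\tfrac{\tau}{4}(x-1)(2x-1)\Big],
\end{equation*}
with $\beta:=\tfrac{\tau\epsilon^2}{2h}S_{pp}$. Since $(x-1)(2x-1)<1$ on $(0,1]$, one gets $h\ge 0$ on $[0,1]$ as soon as $\beta+\tfrac{\tau}{4}\le 1$. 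Now $S_{pp}=A_s t_0\,h^{1-2s}$, so the CFL bound $\tau\le h^{2s}/(2\epsilon^2)$ forces $\beta\le A_s t_0/4$, while $\tau\le 2$ gives $\tfrac{\tau}{4}\le\tfrac12$; provided $A_s t_0=S_{pp}h^{2s-1}\le 2$ these combine to $\beta+\tfrac{\tau}{4}\le 1$, giving $\Psi(W)_p\ge 0$. For the upper bound I would exploit the symmetry $f(1-u)=-f(u)$ of the potential \eqref{Fu00}: setting $Z:=\mathbf 1-W\in[0,1]^{N-1}$ and using $(\bs S\mathbf 1)_q=\rho_q:=\sum_j S_{qj}>0$ (positive by the diagonal dominance of Theorem \ref{DM01}), a direct computation gives $1-\Psi(W)_q=\Psi(Z)_q+\tfrac{\tau\epsilon^2}{2h}\rho_q\ge 0$, the first term being nonnegative by the lower bound just established. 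Hence $\Psi\big([0,1]^{N-1}\big)\subseteq[0,1]^{N-1}$.

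The main obstacle I anticipate is the uniform diagonal estimate $A_s t_0=S_{pp}h^{2s-1}\le 2$ on $(s_0,1]$, which is precisely what calibrates the CFL constant $h^{2s}/(2\epsilon^2)$: one must show $\tfrac{2^{4-2s}-8}{2\Gamma(4-2s)\cos(s\pi)}\le 2$, with equality at $s=1$ (where $\bs S$ reduces to the classical matrix with $S_{pp}=2/h$). I would verify this by monotonicity in $s$, handling $s=\tfrac12$ as the removable-singularity limit $\tfrac{4}{\pi}\ln 2$ from Theorem \ref{th21}; this is the genuinely new ingredient relative to the semi-implicit case, where no explicit diffusion term appears. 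A secondary point is the solvability of the nonlinear system $\Phi(U^{n+1})=\Psi(U^n)$: since $I+\tfrac{\tau\epsilon^2}{2h}\bs S$ is symmetric positive definite on $(s_0,1]$, existence can be secured by a fixed-point argument on the invariant box $[0,1]^{N-1}$, after which the a priori bounds above localize the iterate.
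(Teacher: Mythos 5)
Your proposal is correct, and its skeleton coincides with the paper's proof: rewrite \eqref{modified1CN} as an implicit half equals an explicit half (your $\Phi(U^{n+1})=\Psi(U^n)$ is exactly the paper's regrouped equation), show the explicit map sends $[0,1]^{N-1}$ into itself under the stated restriction on $\tau$, and then run the same contradiction argument through Lemma \ref{samesign}. Where you differ is in how box-invariance of $\Psi$ is established. The paper splits the right-hand side as $\bs H U^{n}+\tfrac12\big(U^{n}-\tau f(U^{n})\big)$ with $\bs H=\tfrac{\bs I}{2}-\tfrac{\tau\epsilon^2}{2h}\bs S$, proves $\bs H$ is \emph{entrywise nonnegative} (this is where the CFL condition enters, through $h_{ii}\ge 0$) with row sums at most $\tfrac12$, and recycles the scalar bound $0\le x-\tau f(x)\le 1$ from Theorem \ref{FDMEh}; both bounds then follow at once. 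You instead obtain the lower bound from the diagonal estimate $(\bs S W)_p\le S_{pp}W_p$ plus a one-variable analysis, and the upper bound from the symmetry $f(1-u)=-f(u)$ combined with $\bs S\mathbf 1>0$ — an elegant alternative that avoids row-sum bookkeeping for $\bs H$, at the price of feeding the lower bound back in as input. Both routes hinge on exactly the same calibration fact, namely $S_{ii}h^{2s-1}=A_st_0\le 2$ on $(s_0,1]$, which the paper states as $\varphi(s)\ge\tfrac12$ and dismisses with ``one verifies readily''; you correctly isolate this as the crux and propose the same kind of verification (monotonicity in $s$, with the removable singularity $\tfrac{4}{\pi}\ln 2$ at $s=\tfrac12$), so there is no gap here relative to the paper. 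Two further minor points in your favor: on the implicit side you only use the sign of $f$ outside $[0,1]$ rather than the paper's monotonicity of $g_2(x)=x+\tau f(x)$ (so no constraint on $\tau$ is needed for that step), and you address solvability of the nonlinear system, which the paper leaves untouched.
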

\begin{proof}
Rewrite \eqref{modified1CN} as
\begin{equation}\label{modified2CN}
\Big(\frac{\bs I}{2}+\frac{\tau\epsilon^2}{2h} {\bs S}\Big)U^{n+1}+\frac{U^{n+1}+\tau f(U^{n+1})}{2}=\Big(\frac{\bs I}{2}-\frac{\tau\epsilon^2}{2h} {\bs S}\Big)U^{n}+\frac{U^{n}-\tau f(U^{n})}{2}.
\end{equation}
Denote its right hand side by
\begin{equation}\label{RHS}
\text{RHS}:=\Big(\frac{\bs I}{2}-\frac{\tau\epsilon^2}{2h} {\bs S}\Big)U^{n}+\frac{U^{n}-\tau f(U^{n})}{2}.
\end{equation}
Let $\bs H=\frac{\bs I}{2}-\frac{\tau\epsilon^2}{2h} {\bs S}$. Then for  $s\in(s_{0},1]$,  the matrix $\bs H$ satisfies
\begin{equation}\label{iicond}
{\rm (i)} \;\;\;h_{ii}=\frac{1}{2}-\frac{\tau\epsilon^2}{2h^{2s}\varphi(s)};\quad {\rm (ii)} \;\;\; h_{ij}\big|_{j\neq i}\geq 0\;\; \text{and}\;\; \max_{i}\sum_{j}h_{ij}\leq \frac{1}{2},
\end{equation}
where  
\begin{equation*}
\varphi(s)=
\begin{cases}
\dfrac{\cos(s\pi)\Gamma(4-2s)}{2^{3-2s}-4},\;& s\in(s_{0},\frac{1}{2})\cup(\frac{1}{2},1],\\[6pt]
\dfrac{\pi}{16\ln2},\qquad\quad\;\;\, &s=\frac{1}{2}.
\end{cases}
\end{equation*}
It is evident that if $\tau\leq\frac{h^{2s}}{\epsilon^{2}}\varphi(s),$ then $h_{ii}\geq 0,$ so  the elements of $\bs H$ are nonnegative,
in view of \eqref{iicond}.
Consequently,
\begin{equation}\label{forH}
\|\bs H\|_{\infty}=\max_{i}\sum_{j}|h_{ij}|=\max_{i}\sum_{j}h_{ij}\leq \frac{1}{2}.
\end{equation}
One verifies readily that for $s\in (s_0,1]$, $\varphi(s)\ge \frac 1 2$.
Thus, we can simplify the constraint imposed on time step  to
$\tau\leq\frac{h^{2s}}{2\epsilon^{2}}. $

For the last term in \eqref{RHS}, we know from the proof of Theorem \ref{FDMEh} that
\begin{equation}\label{forg}
0\leq U^{n}-\tau f(U^{n})\leq 1, \;\;\; \text{for}\; U^{n}\in[0,1], \;\; \text{if}\;  \tau<2.
\end{equation}
Thus, if $U^{n}\in[0,1]$ and $0<\tau<\min\big\{\frac{h^{2s}}{2\epsilon^{2}},2\big\}$, we obtain from  \eqref{forH}-\eqref{forg} that
\begin{equation*}
0\leq\text{RHS}_{i}=\sum_{j}h_{ij}U_{j}^{n}+\frac{U_{i}^{n}-\tau f(U_{i}^{n})}{2}\leq\sum_{j}h_{ij}+\frac{1}{2}\leq 1.
\end{equation*}

Next, denote the left hand side of \eqref{modified2CN} as
\begin{equation}\label{LHS}
\text{LHS}:=\Big(\frac{\bs I}{2}+\frac{\tau\epsilon^2}{2h} {\bs S}\Big)U^{n+1}+\frac{U^{n+1}+\tau f(U^{n+1})}{2}.
\end{equation}
 We  first consider the last term in \eqref{LHS}. Observe that each element of $U^{n+1}+\tau f(U^{n+1})$ is of the form
\begin{equation}\label{g2}
g_{2}(x)=x+\tau f(x).
\end{equation}
It can be verified that if $\tau\leq 2$,
\begin{equation*}
g_{2}'(x)=3\tau(x-\frac{1}{2})^{2}+1-\frac{\tau}{4}\geq 1-\frac{\tau}{4}\geq 0,\;\,\text{for}\;\,\tau<2.
\end{equation*}
Since $g_{2}(0)=0$, we have
\begin{equation*}
g_{2}(x)>0,\;\text{when}\;x>0;\quad g_{2}(x)<0,\;\text{when}\;x<0.
\end{equation*}
Thus, we have $U^{n+1}$ and $U^{n+1}+\tau f(U^{n+1})$ are positive or negative simultaneously.

Then we can prove $0\leq U^{n+1}\leq 1$ by contradiction using Lemma \ref{samesign} as in Theorem \ref{FDMEh}, we omit it here. 
\end{proof}
\subsubsection{Energy dissipation of the scheme \eqref{modifiedCN}} 
\begin{theorem}
If the initial value satisfies $0\leq u_{0}(x)\leq 1$, then for $s\in(s_{0},1]$, the numerical solutions of the scheme \eqref{modified1} satisfies the discrete energy dissipation law:
\begin{equation}
E_{h}(u^{n+1})\leq E_{h}(U^{n}),
\end{equation}
under the time step constraint in \eqref{stepCN}.
\end{theorem}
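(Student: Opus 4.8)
The plan is to test the matrix form \eqref{modified1CN} against the increment $U^{n+1}-U^{n}$ in the $h$-weighted Euclidean inner product $\langle a,b\rangle:=h\,a^{T}b$, which is the inner product that makes the discrete energy \eqref{Ehu} consistent with the continuous one \eqref{EnergyEu}. Concretely, I would multiply \eqref{modified1CN} by $h$ and take the dot product with $U^{n+1}-U^{n}$, producing three contributions to be handled separately: a time-difference term, the stiffness term, and the nonlinear term. The first contributes $\frac{h}{\tau}|U^{n+1}-U^{n}|^{2}\ge 0$, which will serve as the dissipation reservoir. For the stiffness term, since $\bs S$ is symmetric (Theorem \ref{th21}), the elementary identity $(a-b)^{T}\bs S(a+b)=a^{T}\bs S a-b^{T}\bs S b$ converts it exactly into $\frac{\epsilon^{2}}{2}\big((U^{n+1})^{T}\bs S U^{n+1}-(U^{n})^{T}\bs S U^{n}\big)$, i.e.\ the difference of the quadratic parts of $E_{h}$.

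The nonlinear term is where the Crank--Nicolson analysis departs from the semi-implicit one. For the quartic potential \eqref{Fu00} I would use the nodewise trapezoidal identity $\tfrac12(b-a)\big(f(a)+f(b)\big)-\big(F(b)-F(a)\big)=\tfrac{1}{12}(b-a)^{3}f''(\eta)$ for some $\eta$ between $a$ and $b$ (a consequence of $f=F'$ together with the trapezoidal remainder formula). Summing over the nodes and inserting all three pieces into the tested identity gives the exact relation \[ E_{h}(U^{n+1})-E_{h}(U^{n})=-\frac{h}{\tau}\big|U^{n+1}-U^{n}\big|^{2}-\frac{h}{12}\sum_{j=1}^{N-1}\big(U_{j}^{n+1}-U_{j}^{n}\big)^{3}f''(\eta_{j}). \] It remains to absorb the indefinite cubic remainder into the nonpositive dissipation term. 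Here the maximum principle (proved in the preceding theorem under the constraint \eqref{stepCN}) is essential: it guarantees $U_{j}^{n},U_{j}^{n+1}\in[0,1]$, so that $|U_{j}^{n+1}-U_{j}^{n}|\le 1$, and since $f''(u)=6u-3$ satisfies $|f''(u)|\le 3$ on $[0,1]$, the remainder is bounded by $\frac{h}{4}\sum_{j}|U_{j}^{n+1}-U_{j}^{n}|^{3}\le\frac{h}{4}|U^{n+1}-U^{n}|^{2}$. Combining yields \[ E_{h}(U^{n+1})-E_{h}(U^{n})\le h\Big(\tfrac14-\tfrac1\tau\Big)\big|U^{n+1}-U^{n}\big|^{2}, \] which is nonpositive as soon as $\tau\le 4$; in particular it holds throughout the range \eqref{stepCN}, where $\tau\le 2$.

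I expect the main obstacle to be precisely the control of the trapezoidal remainder, whose sign is indefinite, so that the dissipation inequality cannot close by convexity of $F$ alone. The decisive ingredient is that the maximum principle confines the iterates to $[0,1]$, which simultaneously bounds $f''$ and lets the cubic remainder be dominated by the quadratic dissipation term; this is exactly where the step-size restriction $\tau\le h^{2s}/(2\epsilon^{2})$ (used to establish $U^{n}\in[0,1]$) and the diagonal dominance of $\bs S$ from Theorem \ref{DM01} feed into the energy estimate. Everything else — the symmetry manipulation and the trapezoidal expansion — is routine once this confinement is in place.
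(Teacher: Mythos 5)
Your proof is correct, and it actually supplies what the paper leaves out: the paper's ``proof'' of this theorem is a single sentence deferring to \cite[Theorem 2]{hou2017numerical}, so there is no written argument to compare against. The argument being pointed to there is exactly the one you give: test the Crank--Nicolson scheme \eqref{modified1CN} (the statement's reference to \eqref{modified1} is a typo, which you resolved correctly, since the constraint \eqref{stepCN} and the enclosing subsection belong to the CN scheme) against the increment $U^{n+1}-U^{n}$ in the $h$-weighted inner product, telescope the quadratic part of $E_h$ using only the symmetry of $\bs S$, and control the trapezoidal remainder of the potential term. Your handling of the remainder is sound: with $f''(u)=6u-3$ and the iterates confined to $[0,1]$ by the maximum-principle theorem (which is where \eqref{stepCN} and the diagonal dominance of Theorem \ref{DM01} enter --- the energy identity itself needs nothing from $\bs S$ beyond symmetry), the cubic term is dominated by $\tfrac{h}{4}|U^{n+1}-U^{n}|^{2}$, and dissipation follows for $\tau\le 4$, hence in particular under \eqref{stepCN}. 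The one point worth stating explicitly in a final writeup is that the trapezoidal error formula is applied nodewise with $\eta_j$ between $U_j^{n}$ and $U_j^{n+1}$, so the confinement to $[0,1]$ is needed for \emph{both} time levels --- which is exactly what the preceding maximum-principle theorem provides.
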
\begin{proof}
The proof is quite similar to that of   \cite[Theorem 2]{hou2017numerical}, so we omit it.
\end{proof}

\section{Numerical results}\label{sect4:numer}
\setcounter{equation}{0} \setcounter{lmm}{0} \setcounter{thm}{0}
In the section, we provide ample numerical results for the model \eqref{fAC00} by using the proposed schemes with the focus on the illustration of the preservation of maximum principle and energy dissipation.  We also explore the dynamics of the fractional model and show its transition the usual Allen-Cahn model, e.g., the width of the interface in terms of $s.$

\subsection{Accuracy test}  We start with testing the accuracy of the FEM scheme with the stiffness matrix $\bs S$ given in  Theorem \ref{th21}
for  solving the fractional Poisson equation:
\begin{equation}\label{tests032}
(-\Delta)^s u(x) = f(x),\;\;\; x\in\Omega:=(-1,1); \quad
u(x) = 0,\;\;\;  x\in \Omega^c=\mathbb{R}\backslash\bar\Omega,
\end{equation}
for $s\in (0,3/2),$ which admits the exact solution: $u(x)=(1-x^2)^{n+s}$ in $\Omega$ and $u(x)=0$ on $\Omega^c$ for  integer $n\ge 1,$ if (cf. \cite{MR698779})
\begin{equation}\label{fformA}
f(x) = \frac{2^{2s}\Gamma(s+\frac12)\Gamma(n+1+s)}{\Gamma(n+\frac12)}P^{(-\frac12,s)}_n(1-2x^2),\ \ x\in \Omega,
\end{equation}
where $P^{(\alpha,\beta)}_n(x)$ denote the $n$-th Jacobi polynomials (cf. \cite{ShenTangWang2011}).

In Tables \ref{MerrOrdernk}, we tabulate the maximum errors and the corresponding convergence rates (c.r.) obtained by the proposed method with various $s\in(0,3/2)$, for which we take $n=1$ and $n=3$. We observe that the numerical errors decay as $h$ decreases for any fixed $s$. As we all know, the convergence rate under $L^\infty$-norm of piecewise linear approximation for the singular function $(1-x^2)^{n+s}$ is $O(h^{\min(2,n+s)})$ in finite element analysis (see, e.g., \cite{babuska2001finite}). In particular, they indicate that the convergence rates of our method is $O(h^{1+s})$ when $n=1$ (see  the left side of Table \ref{MerrOrdernk}) and is $O(h^{2})$ when $n=3$ (see right side of Table \ref{MerrOrdernk}) for any $s\in(0,1)$, which confirm the theoretical expectations.

\begin{table}[!th]
\caption{Errors and convergence rates.}
\centering
\setlength{\tabcolsep}{1mm}{
\begin{tabular}{c c c c c c c c c c c}
\hline
\multirow{2}{*}{$(s,h)$}
& \multicolumn{5}{c}{$n=1$}
& \multicolumn{5}{c}{$n=3$}\\
\cmidrule(lr){2-6}  \cmidrule(lr){7-11}
& $2^{-5}$ & $2^{-6}$ & $2^{-7}$ & $2^{-8}$ & $2^{-9}$
& $2^{-5}$ & $2^{-6}$ & $2^{-7}$ & $2^{-8}$ & $2^{-9}$ \\
\hline
0.3  & 9.04e-4  & 4.27e-4 & 1.86e-4
     & 7.80e-5  & 3.22e-5
     & 1.07e-3  & 2.69e-4 & 6.71e-5
     & 1.68e-5  & 4.22e-6\\
c.r. & --       & 1.08    & 1.20
     & 1.25     & 1.27
     & --       & 2.00    & 2.00
     & 2.00     & 1.99 \\
\hline
0.5  & 8.26e-4  & 2.76e-4 & 1.05e-4
     & 3.82e-5  & 1.37e-5
     & 1.13e-3  & 2.84e-4 & 7.11e-5
     & 1.78e-5  & 4.45e-6\\
c.r. & --       & 1.58    & 1.40
     & 1.45     & 1.48
     & --       & 2.00    & 2.00
     & 2.00     & 2.00\\
\hline
0.95 & 1.14e-3  & 2.93e-4 & 7.47e-5
     & 1.90e-5  & 4.88e-6
     & 8.86e-4  & 2.28e-4 & 5.86e-5
     & 1.50e-5  & 3.87e-6\\
c.r. & --       & 1.97    & 1.97
     & 1.97     & 1.96
     & --       & 1.96    & 1.96
     & 1.96     & 1.96\\
\hline
1    & 9.77e-4  & 2.44e-4 & 6.10e-5
     & 1.53e-5  & 3.81e-6
     & 6.50e-4  & 1.63e-4 & 4.07e-5
     & 1.02e-5  & 2.54e-6\\
c.r. & --       & 2.00    & 2.00
     & 2.00     & 2.00
     & --       & 2.00    & 2.00
     & 2.00     & 2.00\\
\hline
1.2  & 3.33e-3  & 1.27e-3 & 4.59e-4
     & 1.61e-4  & 5.58e-5
     & 3.80e-3  & 1.36e-3 & 4.78e-4
     & 1.64e-4  & 5.60e-5\\
c.r. & --       & 1.40    & 1.47
     & 1.51     & 1.53
     & --       & 1.48    & 1.51
     & 1.54     & 1.55\\
\hline
\end{tabular}}\label{MerrOrdernk}
\end{table}

Next, we test the convergence rate of two full discrete schemes for \eqref{fAC00}, i.e., semi-implicit and Crank Nicolson scheme established in the last section.
For this purpose, we consider \eqref{fAC00} with an exact solution by adding an extra right hand side $g(x,t)$, that is,
\begin{equation*}
u_{t}(x,t)+\epsilon^2 (-\Delta)^s u(x,t)+f(u(x,t))=g(x,t),\quad x\in [-L,L],\;t\in(0,T],
\end{equation*}
where the boundary and initial condition is given in \eqref{fAC00}.
We take $u(x,t)=e^{-t-\lambda^2x^2}$, then the decay rate of $u(x,t)$ can be easily controlled by choosing different $\lambda$.
According to \cite[Prop.\,4.2]{sheng2019fast}, we find that the right side function can be expressed as
\begin{equation*}
\begin{split}
g(x,t)= &\pi^{-\frac12}(2\lambda)^{2s}\Gamma(s+\frac12)\,\epsilon^2\, e^{-t}\,{}_{1}F_{1}\big(s+\frac{1}{2};\frac{1}{2};-\lambda^2x^2\big)
\!-\frac{1}{2}e^{-t-\lambda^2x^2}\!-\frac{3}{2}e^{-2t-2\lambda^2x^2}\!+e^{-3t-3\lambda^2x^2},
\end{split}
\end{equation*}
where the confluent hypergeometric function (cf.\,\cite{Nist2010})
\begin{equation}\label{1f1}
{}_{1}F_{1}(a;b;x)=\sum_{n=0}^\infty\frac{(a)_n\,x^n}{(b)_n\,n!},
\end{equation}
with $(\cdot)_n$ denote the rising factorial in the Pochhammer symbol.

We take $\epsilon=0.1$, $L=1$, $T=1.6$, $\lambda=10$, and $s=0.8$. On the left side of Table \ref{convergencerate1}, we present the spatial error for both semi-implicit and Crank Nicolson schemes, for which we fix time step $\tau=10^{-5}$ so that the temporal error is negligible. To test the temporal accuracy, we choose the mesh size $h=2^{-11}$ to make sure the temporal error dominates the error, and list temporal errors for both semi-implicit and Crank Nicolson schemes on the right side of Table \ref{convergencerate1}. We observe that spatial error is $O(h^2)$, while the temporal errors are $O(\tau)$ and $O(\tau^2)$ for semi-implicit and Crank Nicolson scheme, respectively.

\begin{table}[!ht]
\caption{Errors and convergence rates with respect to $(h,\tau)$.}
\begin{center}
\setlength{\tabcolsep}{2.5mm}{
\begin{tabular}{c c c c c c c c c c}
\hline
\multirow{2}{*}{}
& \multicolumn{2}{c}{semi-implicit }
& \multicolumn{2}{c}{CN}
& \multirow{2}{*}{}
& \multicolumn{2}{c}{semi-implicit }
& \multicolumn{2}{c}{CN}\\
\cmidrule(r){2-3} \cmidrule(r){4-5}
\cmidrule(r){7-8} \cmidrule(r){9-10}
$h$&  Errors   &  c.r. & Errors & c.r.
&
$\tau$&  Errors   &  c.r. & Errors & c.r.\\
\hline
$2^{-5}$ & 9.38e-3 & --   & 9.37e-3 & -- &
$1/5$    & 6.87e-2 & --   & 1.92e-3 & -- \\
\hline
$2^{-6}$ & 2.37e-3 & 1.98 & 2.37e-3 & 1.98 &
$1/10$   & 3.47e-2 & 0.98 & 4.77e-4 & 2.00\\
\hline
$2^{-7}$ & 6.11e-4 & 1.96 & 6.08e-4 & 1.96 &
$1/20$   & 1.74e-2 & 0.99 & 1.17e-4 & 2.02\\
\hline
$2^{-8}$ & 1.59e-4 & 1.94 & 1.56e-4 & 1.97 &
$1/40$   & 8.72e-3 & 1.00 & 2.74e-5 & 2.10\\
\hline
$2^{-9}$ & 4.31e-5 & 1.88 & 3.96e-5 & 1.97 &
$1/80$   & 4.37e-3 & 1.00 & 5.29e-6 & 2.38\\
\hline
\end{tabular}}
\end{center}
\label{convergencerate1}
\end{table}

%
%
%
%
\subsection{Fractional-in-space Allen-Cahn equation}  
In this subsection, we focus on the simulation of phase evolution behavior and interfacial behavior for fractional Allen-Cahn equations \eqref{fAC00}. In what follows, we restrict our attention to the semi-implicit scheme \eqref{modified}, as the results obtained by the Crank Nicolson scheme \eqref{modifiedCN} are very similar.  Both schemes satisfy the maximum principle, and the only difference is the convergence rate, i.e., first order and second order in time.
\subsubsection{Phase separation}
We take $\Omega=(-2,2),$ $\epsilon=0.01$, $\tau=0.01,$ $h=2^{-10}$ and the initial data $u_{0}(x)=\frac45\,e^{-x^2}$. In Figure \ref{profile}, we present the snapshots of the solutions at $t=0,4,8,12$ with $s=0.3$ and $s=0.7$. As with classical Allen-Cahn equations (i.e., $s=1$), the phase separation phenomenon is observed, where the solutions gradually correspond to the minimizer of the total energy as time goes on.
\begin{figure}[!ht]
\centering
\subfigure[$s=0.3$]{
\includegraphics[width=0.45\textwidth]{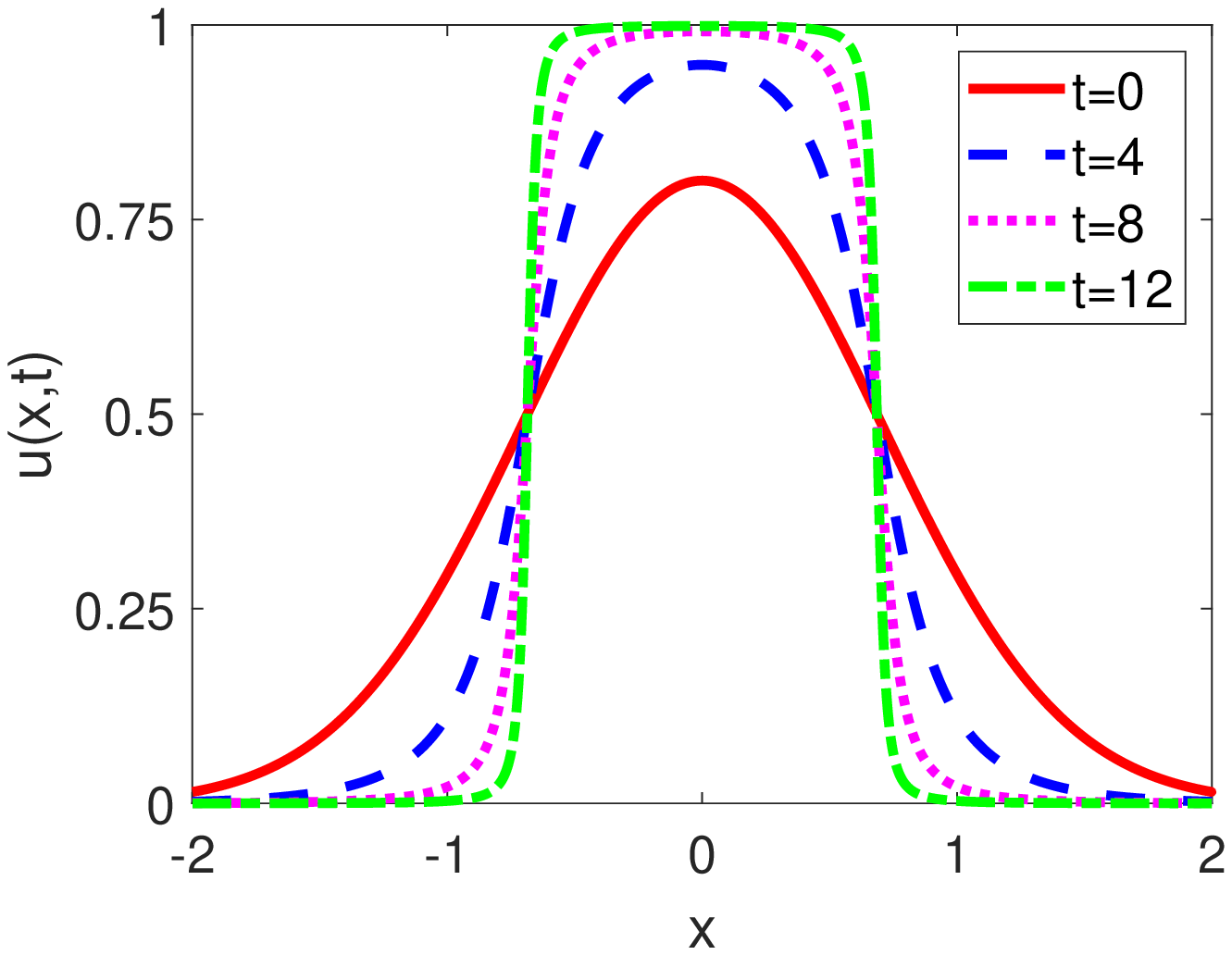}}
\subfigure[$s=0.7$]{
\includegraphics[width=0.45\textwidth]{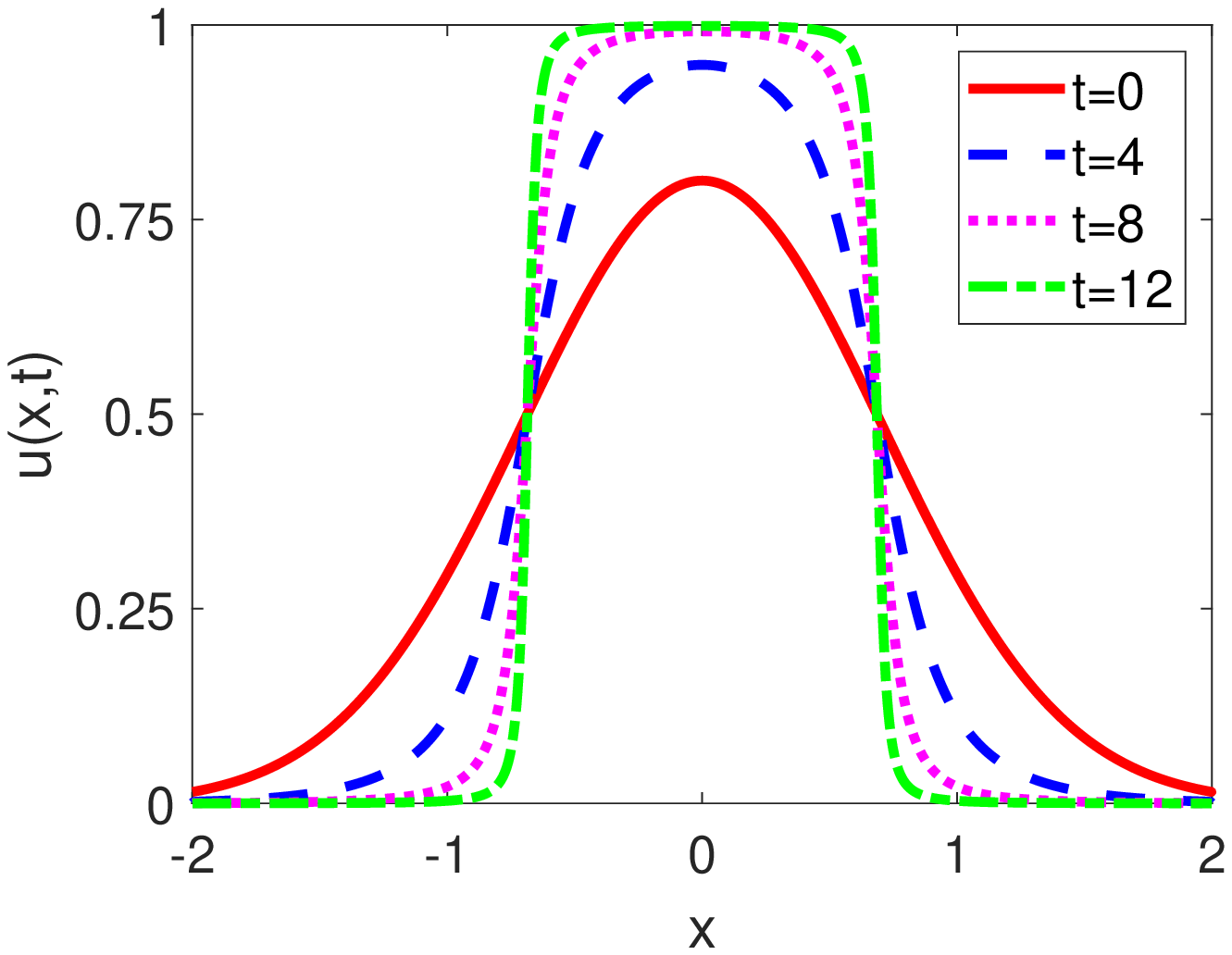}}\vspace{-8pt}
\caption{Profiles of solution to the fractional Allen-Cahn equation with the initial condition $u_{0}(x)=\frac45\,e^{-x^2}$. Snapshots of $u(x)$ are taken at $t=0,4,8,12$. }
\label{profile}
\end{figure}


\subsubsection{Maximum principle and energy dissipation.}\label{subsec422}
We take the initial condition $u_{0}(x)=e^{-x^2}$ in $\Omega=(-10,10)$.  The other parameters are chosen as $\epsilon=0.01$, $T=100$, $\tau=10^{-2}$, and degree of freedom $N=2^{12}$. In Figure\,\,\ref{utmaxsdifferent}, we present the evolution of maximum value and energy at various times with $s\leq 0.5$ and $s>0.5$, which shows that the maximum principle is preserved and the energy dissipation law is also justified numerically. We observe from Figure\,\,\ref{utmaxsdifferent} that both the maximum value and the corresponding energy of the steady state are increased as $s$ increases.


\begin{figure}[!ht]
\centering
\subfigure[maximum value with $s\leq 0.5$]{
\includegraphics[width=0.45\textwidth]{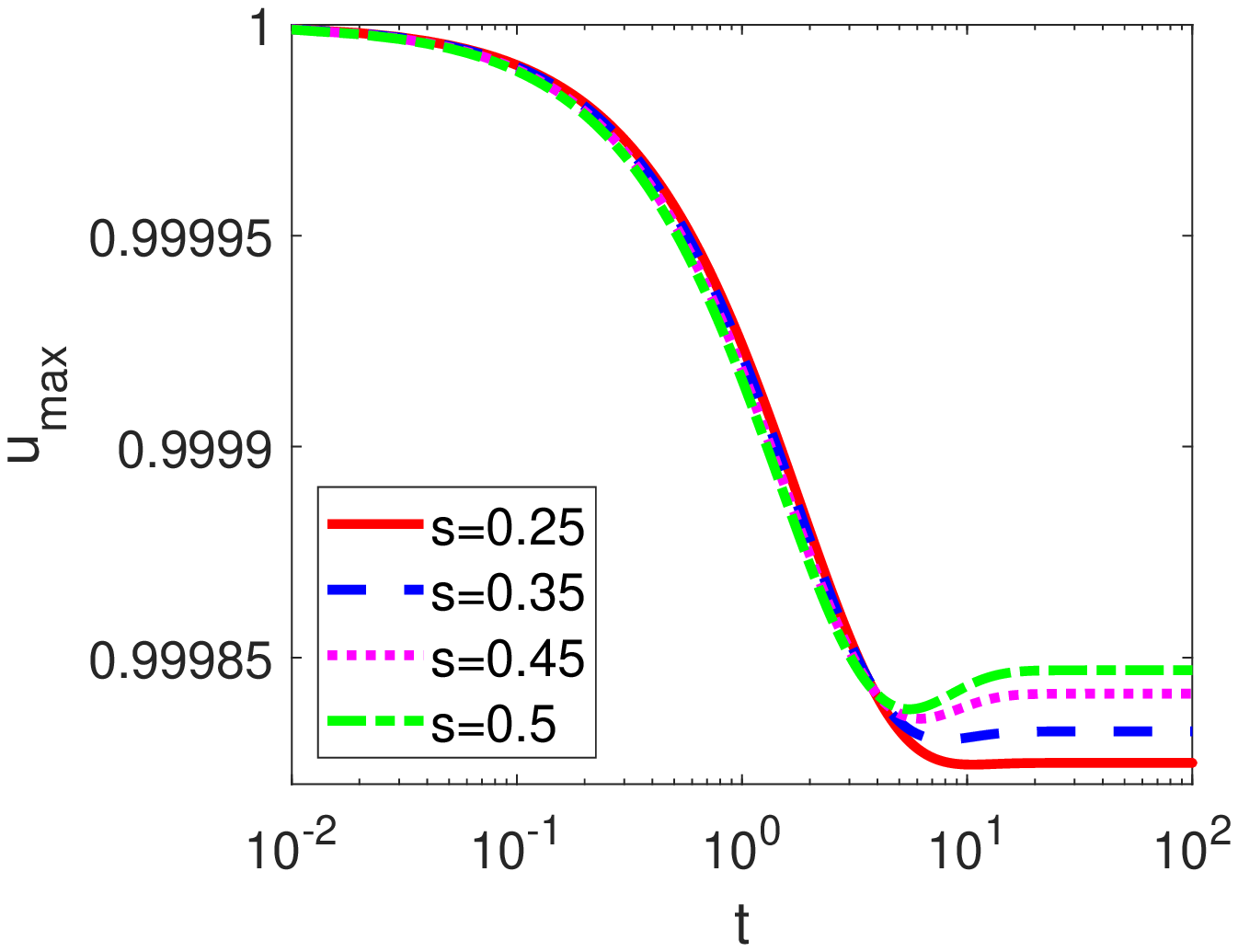}}
\subfigure[maximum value with $s>0.5$]{
\includegraphics[width=0.45\textwidth]{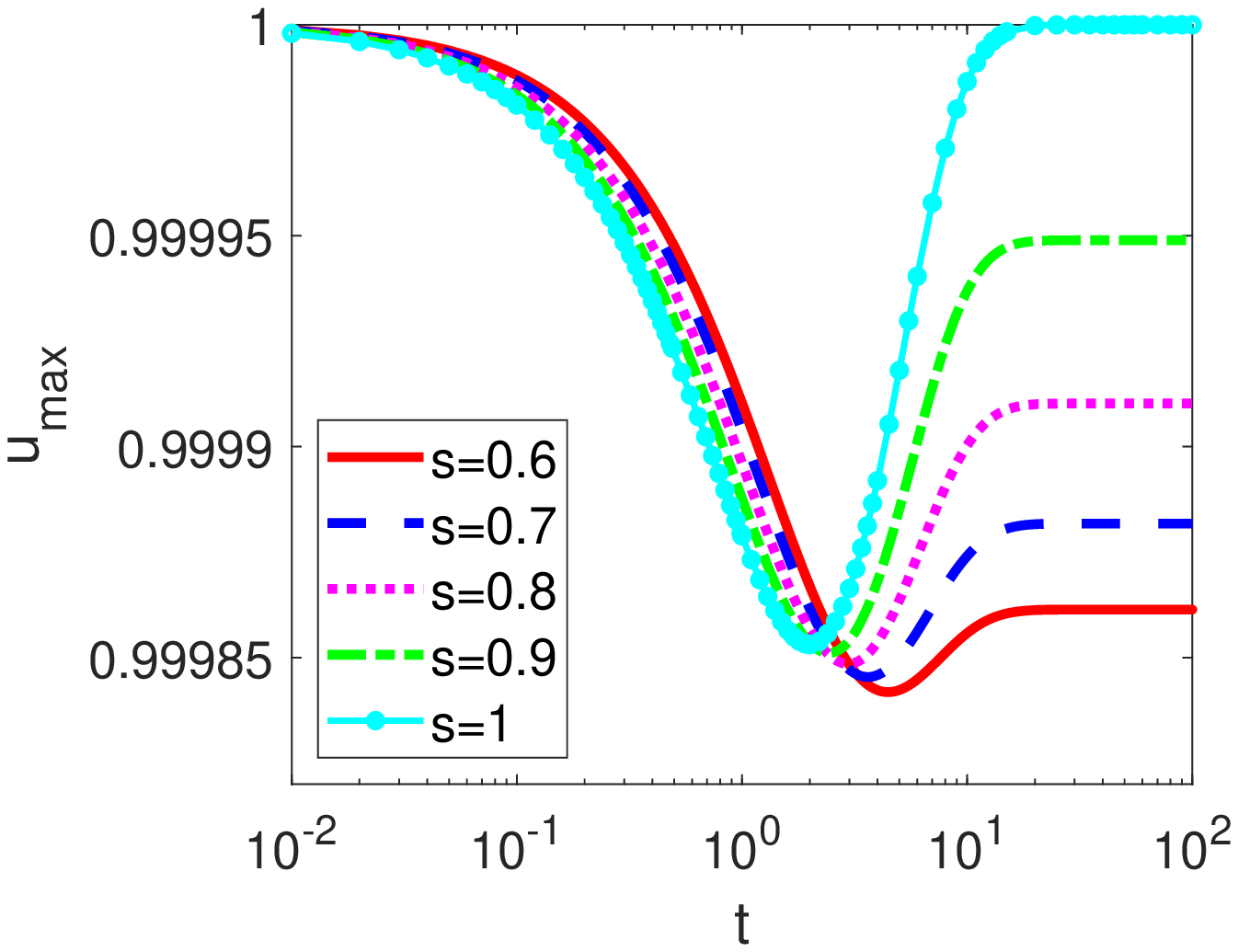}}
\subfigure[energy with $s\leq 0.5$]{
\includegraphics[width=0.45\textwidth]{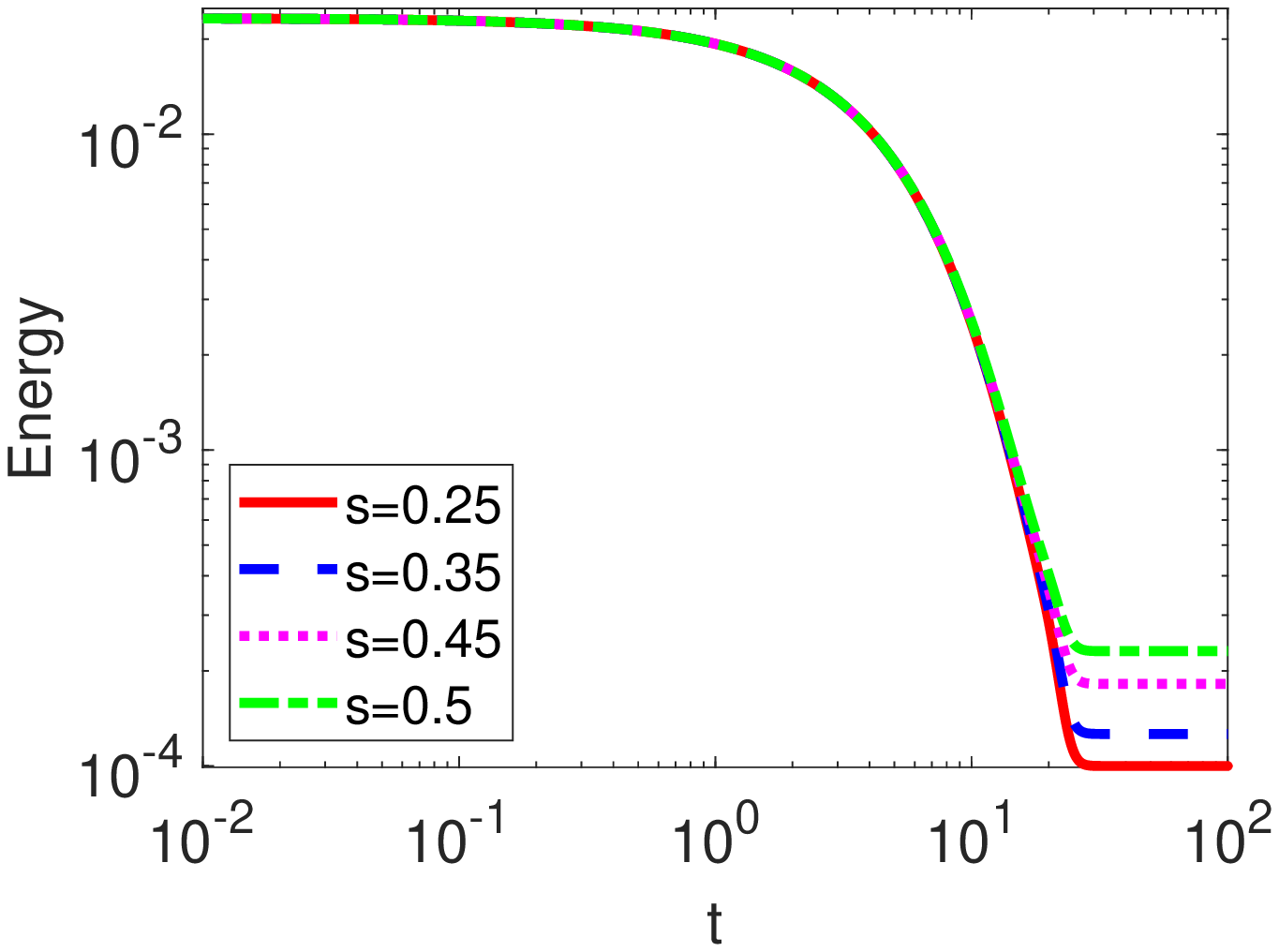}}
\subfigure[energy with $s>0.5$]{
\includegraphics[width=0.45\textwidth]{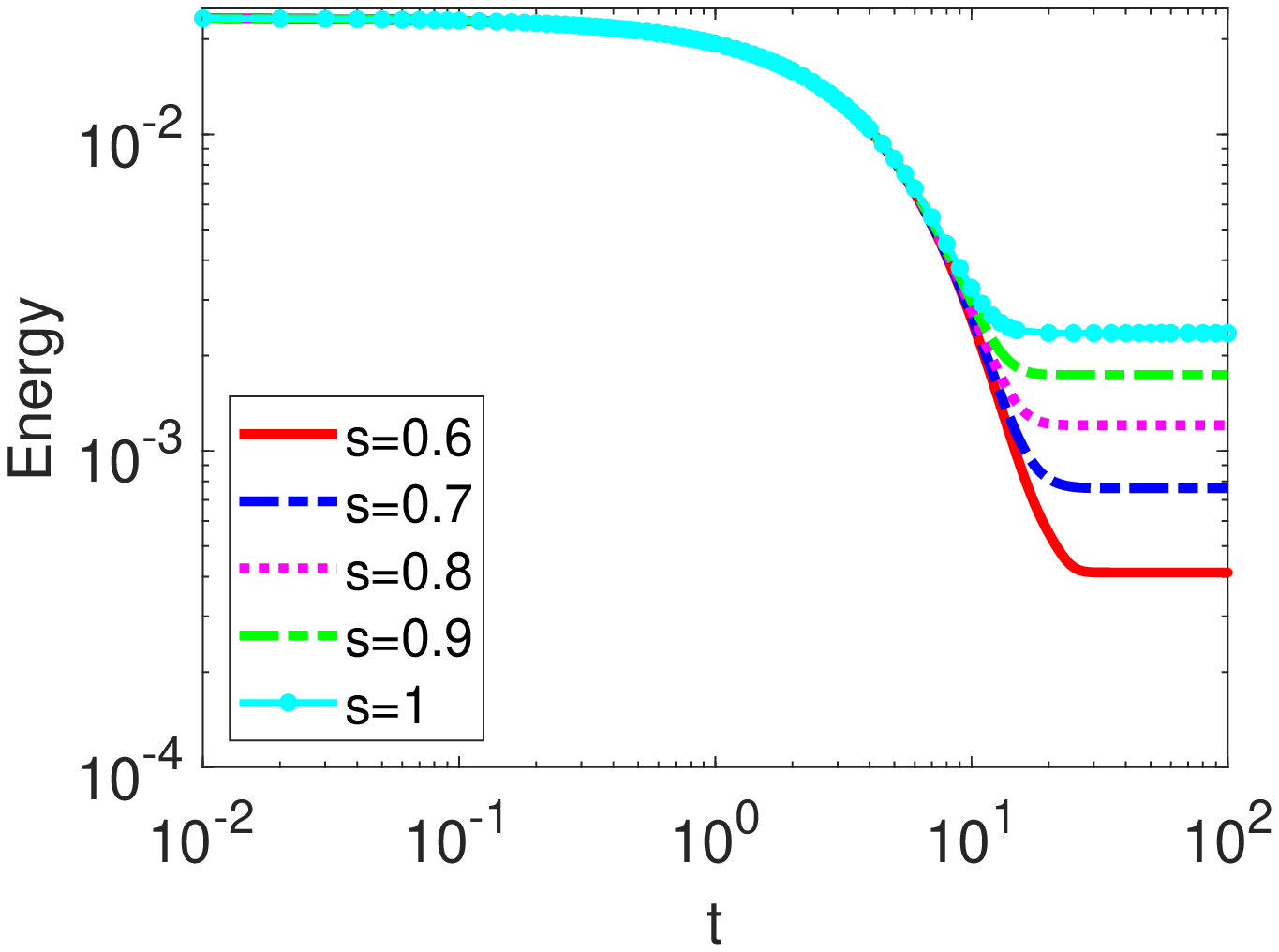}}
\caption{Evolution of maximum value and energy with the initial condition $u_{0}(x)=e^{-x^2}$.}
\label{utmaxsdifferent}
\end{figure}

\begin{figure}[!ht]
\centering
\subfigure[asymptotic behavior with $s\leq 0.5$]{
\includegraphics[width=0.45\textwidth]{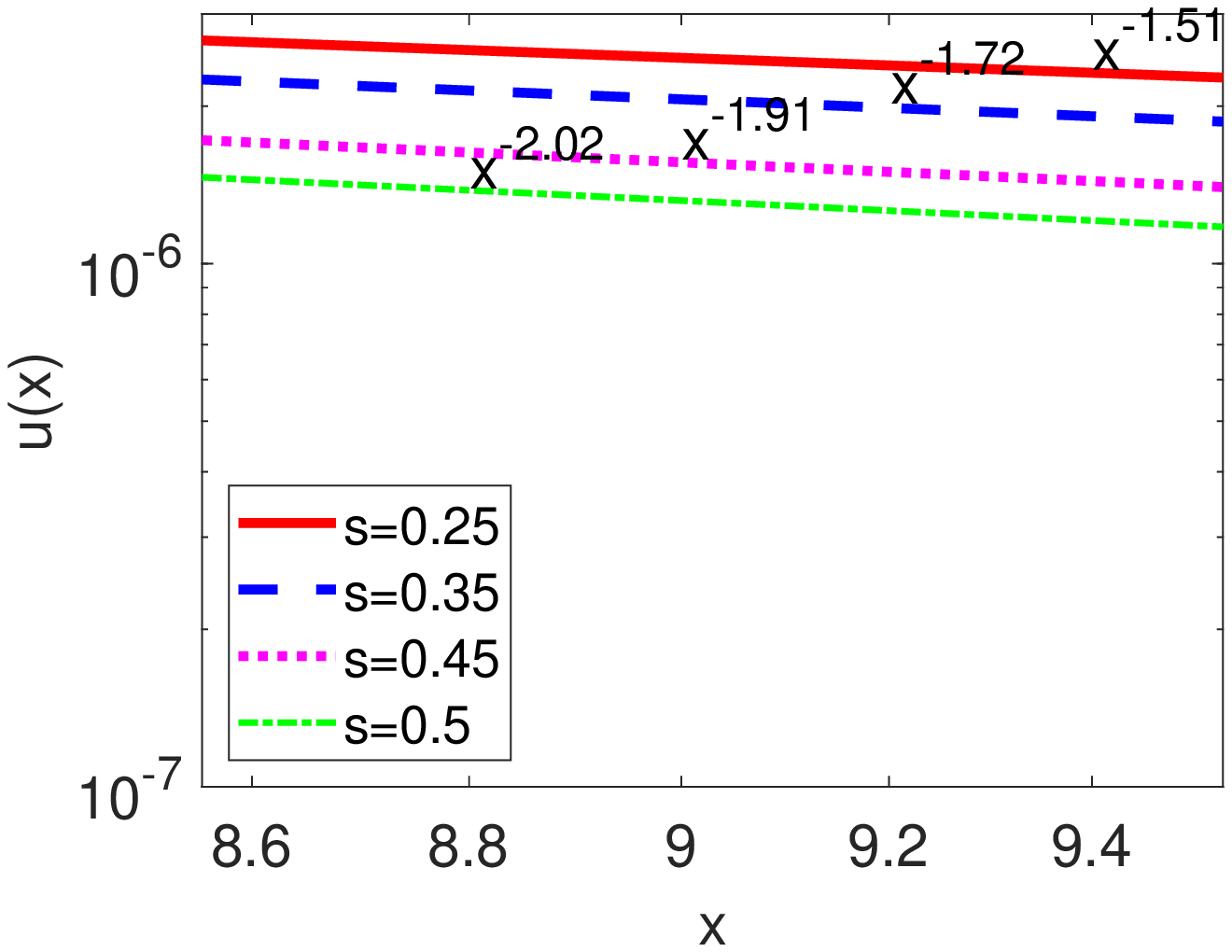}}
\subfigure[asymptotic behavior with $s>0.5$]{
\includegraphics[width=0.45\textwidth]{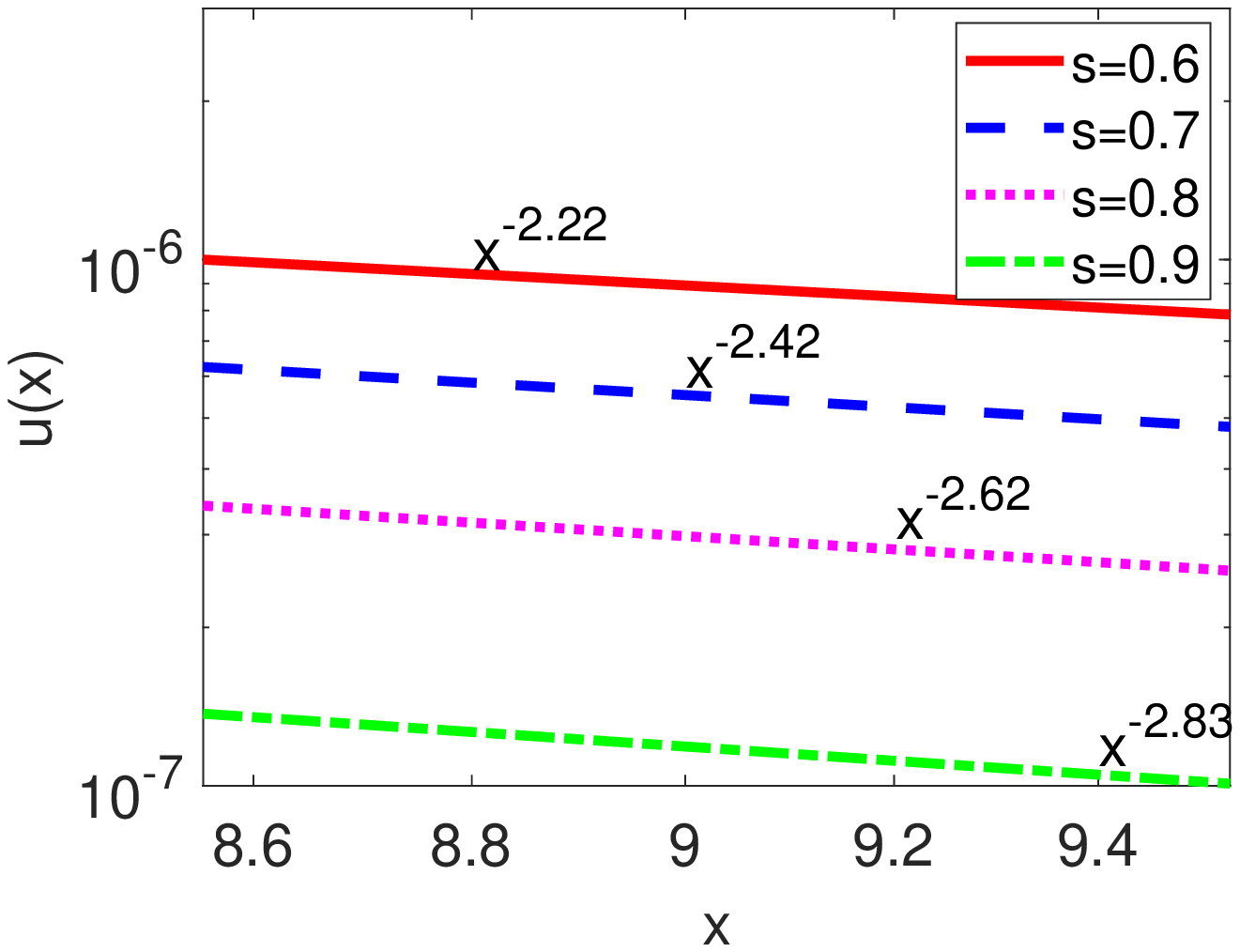}}
\caption{ (a) asymptotic behavior with $s\leq 0.5$; (b) asymptotic behavior with $s>0.5$.}
\label{utasymsdifferent}
\end{figure}
\subsubsection{Asymptotic behavior and interfacial layer}
In order to further study the asymptotic and interfacial behavior of the solution, we still use the same initial condition and parameters as in section \ref{subsec422}.
The numerical results for the asymptotic behavior when $|x|$ is relatively large at time $T=100$ are presented in Figure\,\,\ref{utasymsdifferent}. We observe from Figure\,\,\ref{utasymsdifferent} that the decay property of $u(x,T)$ is slightly different from the integer case (i.e., $s=1$), where $u(x,T)$ decay exponentially as $|x|\to \infty$ when $s=1$. Instead, the solution $u(x,T)$ with $s\in(0,1)$ decay algebraically and behaves like $u(x,T)\sim |x|^{-(2s+1)}$ when $|x|$ is relatively large. We also plot the interfacial layer in Figure\,\,\ref{utainterfacebeh}. We find that the transition of $u(x)$ from $1$ to $0$ will become smoother as $s$ increases, and will become steeper as $s$ decreases.  It is worthwhile to point out that the fractional PDEs with small $s$ should be a powerful tool to simulate the problem with a very steep interface.

\begin{figure}[!ht]
\centering
\subfigure[interfacial layer with $s\leq 0.5$]{
\includegraphics[width=0.45\textwidth]{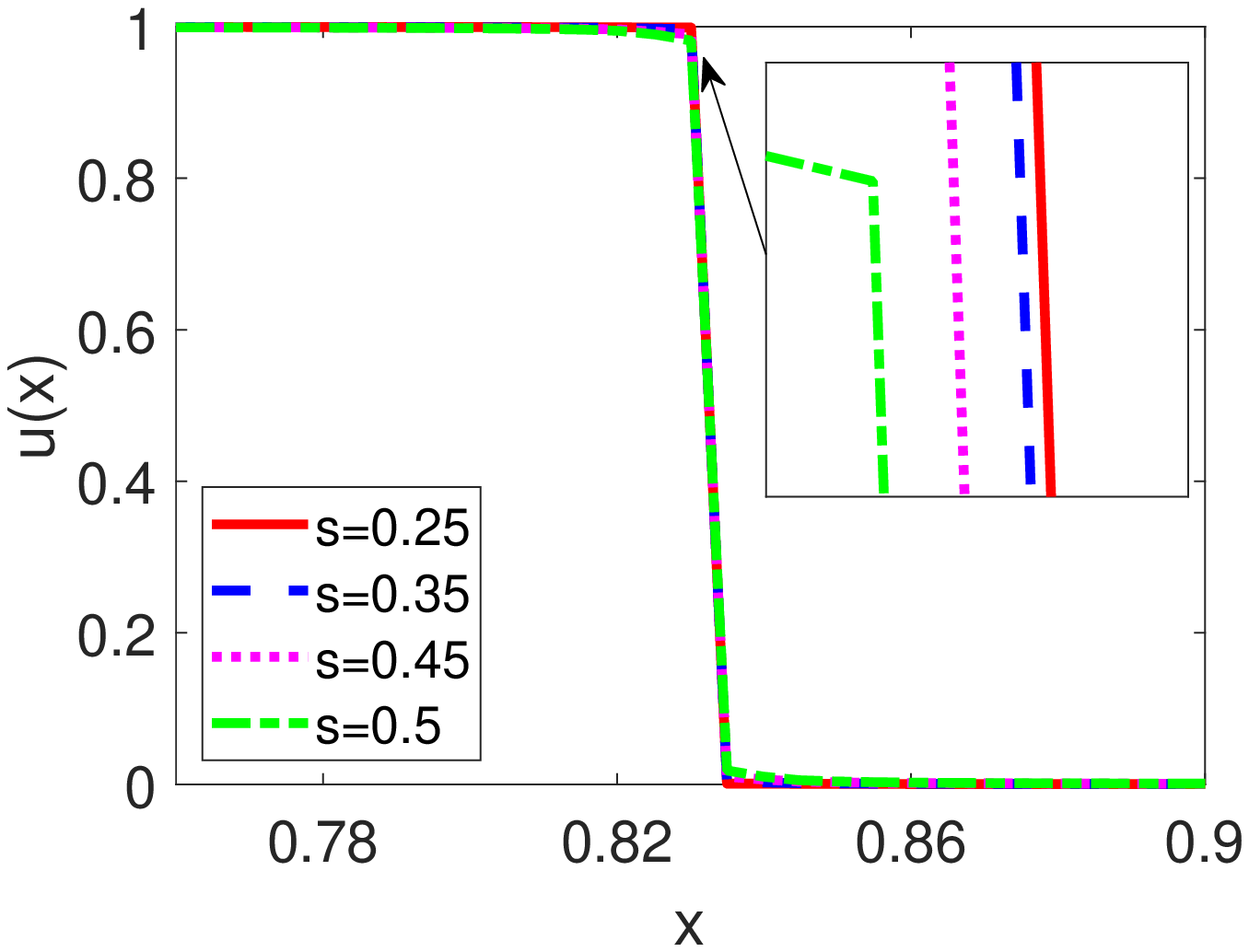}}
\subfigure[interfacial layer with $s>0.5$]{
\includegraphics[width=0.45\textwidth]{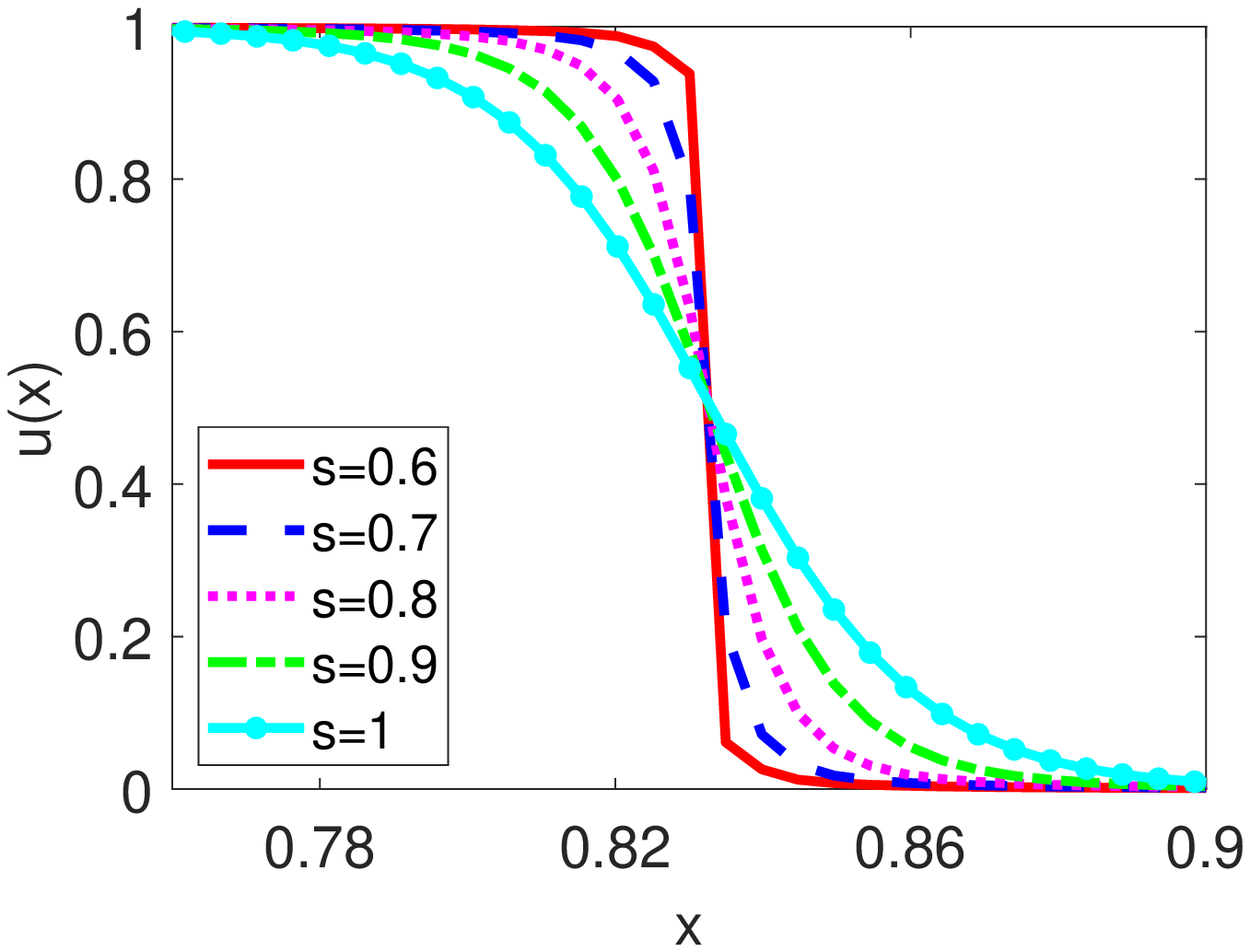}}
\caption{ (a) interfacial layer for $x$ is small with $s\leq 0.5$; (b) interfacial layer for $x$ is small with $s>0.5$.}
\label{utainterfacebeh}
\end{figure}

\begin{figure}[!ht]
\centering
\subfigure[$s=0.8$]{
\includegraphics[width=0.45\textwidth]{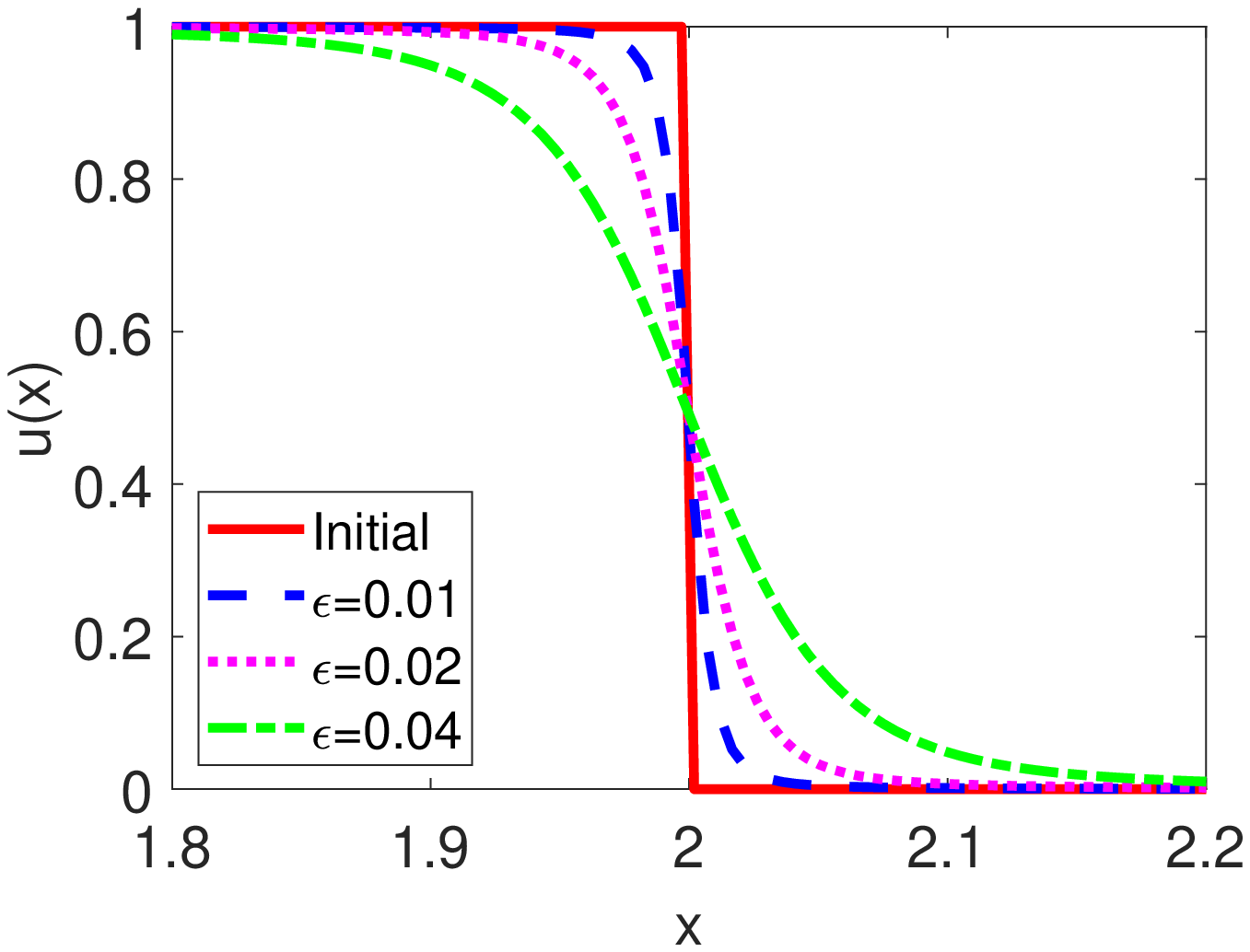}}
\subfigure[different $s$]{
\includegraphics[width=0.45\textwidth]{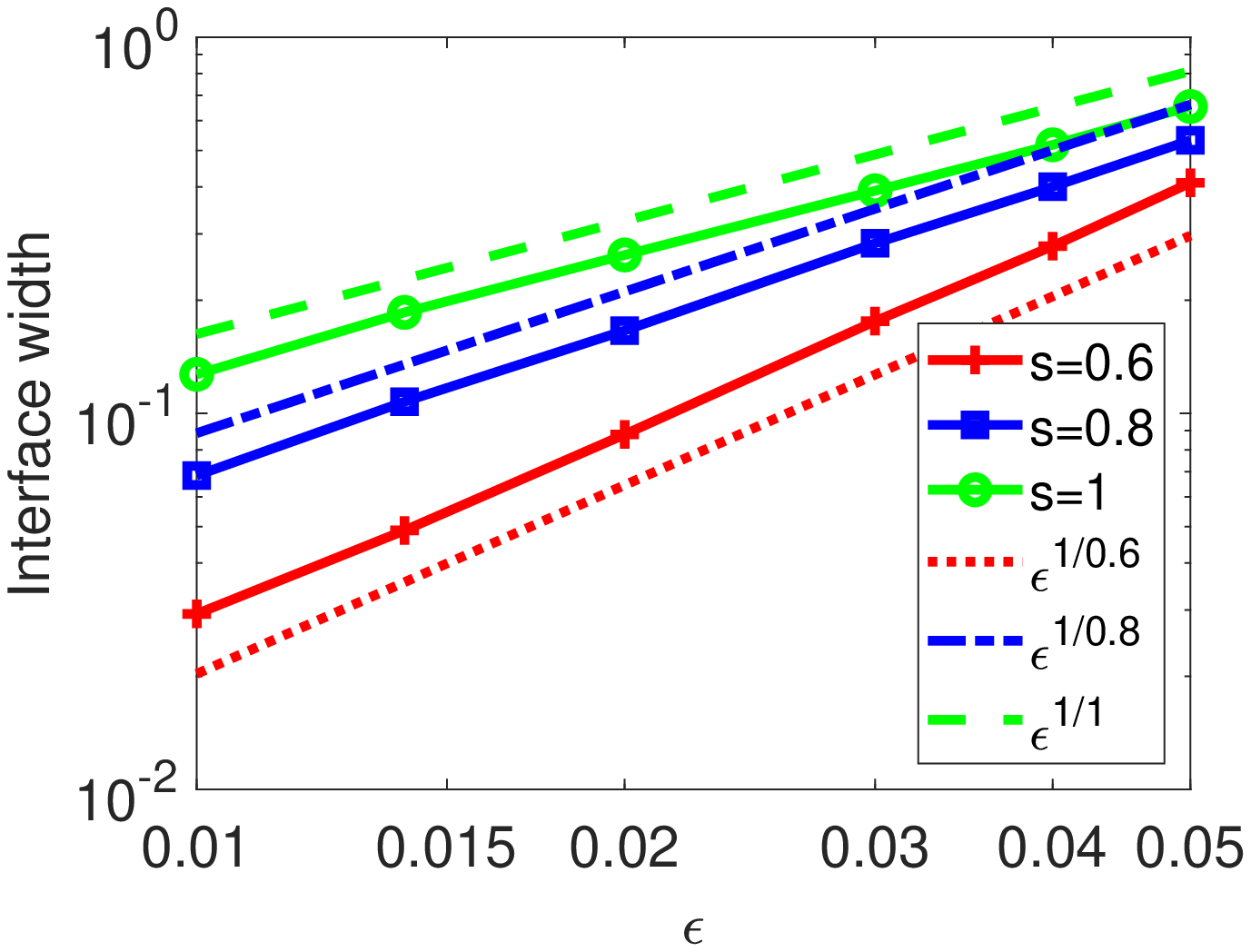}}
\caption{(a) interfacial layer against various $\epsilon$ with $s=0.8$ zoomed in $[1.8,2.2]$. (b) interfacial width agsinst various $\epsilon$ and $s$.}
\label{interface}
\end{figure}
\subsubsection{Interfacial width}
It is well-known that the parameter $\epsilon$ represents the interfacial width of the classical Allen-Cahn equation, i.e., $s=1$,  while for fractional-in-space Allen-Cahn equation the interfacial width decreases as $s$ decreases (cf. \cite{burrage2012efficient,song2016fractional,sheng2020efficient}). However, those existing work enjoys fractional-in-space Allen-Cahn equation with Riemann-Liouville fractional derivatives or spectral fractional Laplacian operator instead of the integral fractional Laplacian. As we mentioned before, different definitions of fractional Laplacian operators are very different from each other. In this example, we take the initial condition to be $u_0(x)=1$ if $x\in(-2,2)$ and $u_0(x)=0$ on $\Omega\backslash (-2,2)$, where $\Omega=(-10,10)$.
The other parameters are $T=100$, $\tau=10^{-2}$, and degree of freedom $N=2^{12}$. We plot interfacial layer with various $\epsilon$ in Figure\,\,\ref{interface} (a), for which we take $s=0.8$. We observe that the  interfacial layer is smoother for larger $\epsilon$. We then present in Figure\,\,\ref{interface} (b) the interfacial width with various $\epsilon$  and $s$. Here, the interfacial width is calculated by the distance of two points where $u(x,T)$ first exceeds $0.01$ and $0.99$. We observe from Figure\,\,\ref{interface} (b) that interfacial width increases as $s$ increases, and behaves like $O(\epsilon^{1/s})$.

\section{Concluding remarks}
In this paper, we derive the explicit form of the stiffness matrix associate with the integral fractional Laplacian in the frequency space, which is extendable to multi-dimensional rectangular elements.  Then we give a complete answer  to the question on when the stiffness matrix can be strictly diagonally dominant.  As an application, we consider the fractional-in-space Allen-Cahn equation and show that it satisfies the maximum principle and energy dissipation law at the continuous level. Then, we proposed two full-discrete  schemes using the  semi-implicit and Crank-Nicolson scheme in time and modified FEM in space,  which can preserve these two properties. Our numerical experiments demonstrate that our algorithms are efficient and accurate. We also observe some interesting phenomena related to  the transition of the model when the fractional order $s\in (s_0,1)$ varies to $s=1$. For example, the
width of the interfacial layer behaves like   $O(\epsilon^{1/s}),$ and the decay of the solution in space obeys certain power law as reported earlier.

\begin{appendix}
 \renewcommand{\theequation}{A.\arabic{equation}}
\section{Proof of Theorem \ref{Dunfordfemp1}} \label{AppendixA}

Using \eqref{freqform} and  Lemma \ref{Dunfordfemp1},  we obtain from direct calculation and a change of variable that
\begin{equation}\begin{split}\label{femp1}
S_{kj}&=\frac{2}{\pi h^2}\int_{\mathbb R} |\xi|^{2s} \cos((k-j)h\xi)\Big(\frac{1-\cos(h\xi)}{\xi^2}\Big)^2\,{\rm d}\xi\\&
=\frac{4}{\pi h^2}\int_0^\infty   \xi^{2s} \cos((k-j)h\xi)\Big(\frac{1-\cos(h\xi)}{\xi^2}\Big)^2\,{\rm d}\xi\\&
=\frac{4}{\pi h^{2s-1}}\int_0^\infty y^{2s-4} \cos(|k-j| y)(1-\cos y)^2\,{\rm d}y,
\end{split}\end{equation}
which implies the entry $S_{kj}$ only depends on $p=|k-j|,$ so the matrix $\bs S$ is a symmetric  Toeplitz matrix.  We intend to explicitly evaluate the integral \eqref{femp1}.  From the fundamental trigonometric identities, we find readily that
\begin{equation}
\begin{split}\label{fyb}
f(y;p)&:=\cos(p y)(1-\cos y)^2=\cos(p y)\Big(\frac32-2\cos y+\frac12\cos(2y)\Big)
 \\& =\frac1 4 \sum_{i=-2}^2  c_i \cos(|p+i|y).
\end{split}
\end{equation}
We continue the calculation by using integration by parts.   The number of times that we can integrate by parts  depends on the range of $s,$ so we proceed with three cases with  $s\in (1,\frac 32), (\frac 1 2, 1)$ and $(0,\frac 1 2), $ separately.  Then we derive the formulas for  $s=\frac 1 2, 1$  by taking limits.

\medskip
\underline{\bf  Case (i)  $s\in(1,\frac32)$:}\,  Recall the integral identity (cf. \cite[P. 440]{Gradshteyn2015Book}):
\begin{equation}
\label{sinint0}
\int^\infty_0x^{\mu-1}\sin(ax)\, {\rm d}x=\frac{\Gamma(\mu)}{a^\mu}\sin\Big(\frac{\mu\pi}{2}\Big),\quad a>0, \;\; \mu\in(0,1).
\end{equation}
We derive from \eqref{femp1} and integration by parts immediately  that
\begin{equation}\label{smn3L}
\int_0^\infty y^{2s-4}f(y;p){\rm d}y= 
-\frac{1}{2s-3}\int_0^\infty y^{2s-3}f^\prime(y;p){\rm d}y.
\end{equation}
By \eqref{fyb}, we obtain from \eqref{sinint0} with $\mu=2s-2\in (0,1)$  that
\begin{equation}\label{smn4L}
\begin{split}
\int_0^\infty y^{2s-3}& f^{\prime}(y;p){\rm d}y =-\frac14 \int_0^\infty  y^{2s-3}\big\{|p-2|\sin(|p-2|y)-4|p-1|\sin(|p-1|y)
\\&\quad+6p\sin(p y)-4(p+1)\sin((p+1)y)+(p+2)\sin((p+2)y)\big\} {\rm d}y
\\& =-\frac {\Gamma(2s-2)} 4\sin\big((s-1)\pi\big)\big\{|p-2|^{3-2s}-4|p-1|^{3-2s}
+6p^{3-2s}\\
&\quad -4(p+1)^{3-2s}+(p+2)^{3-2s}\big\}.
\end{split}
\end{equation}
Thus,  we infer from \eqref{femp1}-\eqref{fyb} and  \eqref{smn3L}-\eqref{smn4L} that
\begin{equation*}
\begin{split}
S_{kj} & 
=\frac{1}{\pi h^{2s-1}}
\frac{1}{2s-3}\int_0^\infty y^{2s-3}f^{\prime}(y;p)\,{\rm d}y=-\frac{ \sin \big(s\pi\big) \;  \Gamma(2s-3)} {\pi h^{2s-1}} t_p.
\end{split}
\end{equation*}
Then  we  have the entries of $\bs S$ with $s\in (1,\frac 32)$ by using  the  reflection property (cf. \cite[P. 138]{Nist2010}):
\begin{equation}\label{gammareflection}
\Gamma(z)\Gamma(1-z)=\frac \pi {\sin \pi z},\quad z\not=0,-1,-2,\cdots.
\end{equation}

\smallskip

\underline{\bf  Case (ii)  $s\in(\frac12,1)$:}\,  In this case, we can integrate \eqref{smn3L} by parts
 one more time, and then use the identity (cf. \cite[P. 441]{Gradshteyn2015Book}):
\begin{equation}
\label{cosint}
\int^\infty_0x^{\mu-1}\cos(ax)\,{\rm d}x=\frac{\Gamma(\mu)}{a^\mu}\cos\Big(\frac{\mu\pi}{2}\Big),\quad a>0,\;\; \; \mu\in(0,1).
\end{equation}
More precisely, by \eqref{smn3L} and \eqref{cosint} with $\mu=2s-1\in (0,1),$ 
\begin{equation}
\begin{split}\label{smn3}
&\int_0^\infty y^{2s-4} f(y;p){\rm d}y=\frac{1}{(2s-3)(2s-2)}\int_0^\infty y^{2s-2}f^{\prime\prime}(y;p){\rm d}y\\
&=-\frac1{4(2s-3)(2s-2)} \int_0^\infty  y^{2s-2}\big\{|p-2|^2\cos(|p-2|y)-4|p-1|^2\cos(|p-1|y)
\\&\quad+6p^2\cos(p y)-4(p+1)^2\cos((p+1)y)+(p+2)^2\cos((p+2)y)\big\} {\rm d}y
\\&=-\frac {\Gamma(2s-3)} 4\cos\Big(\frac{(2s-1)\pi}{2}\Big)t_p.
\end{split}
\end{equation}
Hence,  by \eqref{femp1} and  \eqref{gammareflection},
\begin{equation}\label{skj01}
\begin{split}
&S_{kj} =\frac{4}{\pi h^{2s-1}}\int_0^\infty y^{2s-4} f(y;p) \,{\rm d}y=-\frac{ \sin (s\pi) \;  \Gamma(2s-3)} {\pi h^{2s-1}} t_p=\frac{ \sec(s\pi)} {2h^{1-2s} \Gamma(4-2s) } t_{p},
\end{split}
\end{equation}
which yields the desired formula  with $s\in (\frac 12,1).$

%

\smallskip

\underline{\bf  Case (iii)  $s\in(0, \frac12)$:}\, Similarly, we integrate the first equality  of  \eqref{smn3}  by parts one more time  and obtain
\begin{equation}
\begin{split}\label{smn1}
\int_0^\infty y^{2s-4}f(y;p){\rm d}y &=
-\frac{1}{(2s-3)(2s-2)(2s-1)}\int_0^\infty y^{2s-1}f^{\prime\prime\prime}(y;p)\,{\rm d}y,
\end{split}
\end{equation}
which involves  sines, so we use  \eqref{sinint0} with  $\mu=2s \in (0,1)$ to evaluate the integrals.
Then following the same lines as the previous case, we can derive the entries with $s\in (0, \frac 12)$ in a similar fashion.


\smallskip

\underline{\bf  Case (iv)  $s=\frac 1 2$:}\,
Observe from  \eqref{freqform} that $S_{kj}$  continuously depends on the parameter $s.$
 We can compute the entries by taking the limit
\begin{equation}\label{newlimits}
S_{kj}= \frac 1 4 \lim_{s\to \frac 1 2} \frac{t_p}{\cos(s\pi)}, 
\end{equation}
and  resort to the basic limit
\begin{equation}\label{LLWang1}
\ln z = \lim_{\epsilon\rightarrow 0}\frac{z^{\epsilon} - 1}{\epsilon},\ \ z>0,
\end{equation}
which is a direct consequence of  the L'Hospital's rule. Note that
\begin{equation}\label{psum0}
\sum_{i=-2}^2c_i (p+i)^l=0,\quad l=0,1,2,
\end{equation}
so we can rewrite the entries in \eqref{gpa00} for $p = |k-j|\geq 3$ as 
\begin{equation}\label{newlimits-1}
\begin{split}
S_{kj} & = \frac 1 {4} \lim_{s\to \frac 1 2} \frac{1-2s}{\cos(s\pi)}
 \sum_{i=-2}^2 c_i \lim_{s\to \frac 1 2} \frac{|p+i|^{3-2s} -(p+i)^2}{1-2s}\\
 &=\frac 1 {2\pi} \sum_{i=-2}^2 c_i (p+i)^2  \lim_{s\to \frac 1 2} \frac{|p+i|^{1-2s} -1}{1-2s}
= \frac 1 {2\pi} \sum_{i=-2}^2 c_i (p+i)^2 \ln |p+i|.
\end{split}
\end{equation}
Note that for  $p=0,1,2$ and $p+i=0$, the formula still holds with the understanding of $(p+i)^2\ln|p+i|=0$. This leads to \eqref{StiffMatrix01/2}.

\smallskip

\underline{\bf (v). Case $s=1$:} In this case, we directly take the limit upon \eqref{gpa00}, and find readily that
 \begin{equation}\label{direcal}
 S_{kj} 
 =-\frac {t_p} {2h}\Big|_{s=1}
 =\frac{1}{h} \begin{cases}{2,} & {j=k}, \\ {-1,} & {j=k \pm 1}, \\ {0,} & {\text { otherwise. }}
 \end{cases}
 \end{equation}
 This yields the stiffness matrix of the usual 1D Laplacian as expected.

\end{appendix}

\bibliographystyle{siam}

\bibliography{reffractional}

\end{document}